\newtheorem{thm}{Theorem}[section]
\newtheorem{cor}[thm]{Corollary}
\newtheorem{lem}[thm]{Lemma}
\newtheorem{prp}[thm]{Proposition}
\newtheorem{exa}[thm]{Example}
\theoremstyle{definition}
\newtheorem{rem}[thm]{Remark}
\numberwithin{equation}{section}
\newcommand{\scr}[1]{\mathscr #1}
\def\eins{\boldsymbol 1}
\def\RR{\mathbb R}
\def\NN{\mathbb N}
\def\WW{\mathbb W}
\def\HH{\mathbb H}
\def\DDp{{\scr D}_1}
\def\E{\mathbb E}
\def\D{\scr D}
\def\B{\scr B}
\def\F{\scr F}
\def\P{\scr P}
\def\N{\scr N}
\def\pac{\scr P^\textnormal{ac}}
\def\ppac{\scr P_p^\textnormal{ac}}
\def\twopac{\scr P_2^\textnormal{ac}}
\def\d{\textnormal {d}}
\def\loc{\textnormal {loc}}
\def\det{\textnormal{det}}
\def\id{\textnormal{id}}
\def\supp{\textnormal{supp}}
\def\tr{\textnormal {T}}
\def\lip{\textnormal {Lip}}
\def\lam{\lambda}
\def\Lam{\Lambda}
\def\na{\nabla}
\def\la{\langle}
\def\ra{\rangle}
\def\R{\mathbb R}  \def\ff{\frac} \def\B{\mathbf
	B} \def\W{\mathbb W}
\def\N{\mathbb N}  
  \def\vv{\varepsilon} \def\rr{\rho}
\def\<{\langle} \def\>{\rangle} \def\GG{\Gamma} 
\def\nn{\nabla} \def\pp{\partial}
\def\d{\text{\rm{d}}}  \def\aa{\alpha} \def\D{\scr D}
\def\beg{\begin} \def\beq{\begin{equation}}  \def\F{\scr F}
	\def\e{\text{\rm{e}}}    
	 \def\P{\mathbb P}
	  \def\ll{\lambda}
	  \def\LL{\Lambda}
	 \def\B{\scr B}  
	\def\to{\rightarrow}
	\def\EE{\scr E}
	\def\BB{\scr B}
	\def\R{\mathbb R}  \def\ff{\frac}  \def\B{\mathbf
		B}
	\def\N{\mathbb N}  
	  \def\vv{\varepsilon} \def\rr{\rho}
	\def\<{\langle} \def\>{\rangle} \def\GG{\Gamma} 
	\def\nn{\nabla} \def\pp{\partial} %\def\E{\mathbb E}
	\def\d{\text{\rm{d}}}  \def\aa{\alpha} \def\D{\scr D}
	\def\beg{\begin} \def\beq{\begin{equation}}  \def\F{\scr F}
		\def\e{\text{\rm{e}}}    
		 \def\P{\mathbb P}
		  \def\ll{\lambda}
		  \def\LL{\Lambda}
		 \def\B{\scr B}  
		\def\to{\rightarrow}\def\BB{\mathbb B}
		\def\8{\infty} 
		\author{Panpan Ren$^{e)}$, Michael R\"ockner$^{b,c,d)}$, Feng-Yu Wang$^{a)}$ \\and Simon Wittmann$^{f,g,* )}$\\
	\footnotesize{a) Center for Applied Mathematics and KL-AAGDM, Tianjin
		University, Tianjin, China}\\
	\footnotesize{b) Faculty of Mathematics, Bielefeld University, Bielefeld, Germany}\\
	\footnotesize{c) Academy of Mathematics and Systems Science, Chinese Academy of Sciences, Beijing, China}\\
	\footnotesize{d) School of Data Science, The Chinese University of Hongkong, Shenzhen, China}\\
	\footnotesize{e) Department of Mathematics, City University of  Hong Kong, Hong Kong,  China}\\
	\footnotesize{f) Department of Applied Mathematics, The Hong Kong Polytechnic University,  Hong Kong,  China }\\
	\footnotesize{g) Faculty of Mathematics, Wroc{\l}aw University of Science and Technology, Wroc{\l}aw, Poland}\\
	\footnotesize{ $*$) Corresponding author, \textit{simon.wittmann@pwr.edu.pl}}
}
\title{{\bf Stochastic intrinsic gradient flows on the Wasserstein space}\footnote{Supported in part by the National Key R\&D Program of China (2022YFA1006000), NNSFC(12301180),  RGC (21301925),  NSFC/RGC JRS N-CityU165/25,  State Key Lab, and Deutsche Forschungsgemeinschaft (DFG, German Research Foundation), Project-ID 317210226, SFB 1283. Panpan Ren and Simon Wittmann are supported by Research Centre for Nonlinear Analysis at Hong Kong PolyU.} 
}
		\date{}
\begin{document}
\maketitle

\begin{abstract}
We construct stochastic gradient flows on the $2$-Wasserstein space $\scr P_2$ over $\mathbb R^d$ for energy functionals of the type $W_F(\rho d x)=\int_{\mathbb R^d}F(x,\rho(x))d x$. The functions $F$ and $\partial_2 F$ are assumed to be locally Lipschitz on $\mathbb R^d\times (0,\infty)$. This includes the relevant examples of $W_F$ as the entropy functional or more generally the Lyapunov function of generalized porous media equations. First we define a class of Gaussian-based measures $\Lambda$ on $\scr P_2$ together with a corresponding class of symmetric Markov processes ${(R_t)}_{t\geq 0}$. Next, using Dirichlet form techniques we perform stochastic quantization for the perturbations of these objects which result from multiplying such a measure $\Lambda$ by a density proportional to $e^{-W_F}$. Finally we show that the intrinsic gradient $DW_F(\mu)$ is defined for $\Lambda$-a.e.~$\mu$ and that the Gaussian-based reference measure $\Lambda$ can be chosen in such way that the distorted process ${(\mu_t)}_{t\geq 0}$ is a martingale solution for the equation $d\mu_t=-DW_F(\mu_t) d t+d R_t$, $t\geq 0$.
\end{abstract}

\noindent 2020 Mathematics subject classification: 60J60, 60J25, 60J46, 35Q84, 76S05.\\
\noindent Keywords: Dirichlet forms, diffusion process, stochastic gradient flows, generalized porous media equation, Wasserstein space.

\section{Introduction}

\subsection{The generalized porous media equation}

  In the pioneering work of \cite{JKO98} the solution to the linear Fokker-Planck-Kolmogorov equation, which is a parabolic partial differential equation describing a time-dependent probability density on $\RR^d$, is shown to run along the steepest descent of its Lyapunov function.
%The latter is a relative entropy functional. 
This result is derived by a time-discrete iterative procedure, in which the incremental decrease of the energy function is measured in relation to the $2$-Wasserstein distance. 
A genuinely geometric concept, on the other hand, is first developed in \cite{Otto} to analyse the gradient flow nature of the porous medium equation. It emulates a Riemannian structure on the set of probability densities and is closely related to the notions regarding tangential spaces and gradients used in this article.
%A special case is the heat equation which is associated with the gradient flow of the Gibbs-Boltzmann entropy $\int_{\RR^d}\ln(\rho)\rho\d x$ for a probability density $\rho$.

In the wake of those discoveries there has been an increased interest in gradient flows on metric spaces, in particular in connection with Optimal Transport. 
The monograph \cite{AGS05} provides a comprehensive survey on this topic. Further contributions expand our understanding 
of gradient flows associated with Markov semigroups and parabolic equations in various non-Euclidean settings. 
Studies include gradient flows of the relative entropy functional regarding probability measures over compact Alexandrov spaces (\cite{O09}), 
semi-concave metric measure spaces with lower Ricci curvature bounds and local angle conditions (\cite{S07}), Riemannian manifolds with lower Ricci curvature bounds \cite{E10},
the Wiener space (\cite{F10}), and other structures (\cite{K18}, \cite{ Ost09}, \cite{OS25}).
% strongly convex Finsler manifolds (\cite{ Ost09},\cite{OS25}), or time-dependent metric measure spaces \cite{K18}.
Another interesting and ongoing direction of research in this field aims at exploring gradient flow structures behind other types of evolution equations under the use of modified transport distances (see e.g.~\cite{DNS09}, \cite{E14}, \cite{E21}, \cite{GM22}).
For an overview of Optimal Transport and more related topics, applications and examples, we refer to \cite{Fig21} and  \cite{V09}.

In view of the great importance of gradient flows for evolution equations in physics it is a nearby idea to set up an analogous stochastic model. 
This is the purpose of our work.
We solve a martingale problem for gradient dynamics on the space of probability measures $\scr P$ over $\RR^d$.
The drift term points towards the negative direction of the gradient of a physically relevant energy functional $W:\D(W)\to\RR$ with a domain $\D(W)\subset\scr P$,
typically within the set of absolutely continuous measures.
On the Euclidean space, a distorted Brownian motion
\begin{equation}\label{distBM}
	\d X_t=-\nabla W (X_t)\d t+\d B_t,\quad t\geq 0,
\end{equation}
given a suitable energy functional $W:\RR^d\to \RR$,
has an invariant measure with a density proportional to $\e^{-W(x)}\d x$. Moreover, the solution to \eqref{distBM} is the Markov process associated with a gradient-type Dirichlet form 
proportional to $\int_{\RR^d}\langle \na u(x),\na v(x)\rangle \e^{-W(x)}\d x$ for $u,v\in C^1_b(\RR^d)$ (cf.~the early works \cite{AHS,F81}).
An analogous approach to introduce and treat a stochastic differential equation of gradient type on the Wasserstein space over $\RR^d$ is worked out in this article.
This involves making reasonable choices of substitutes for the Lebesgue measure and Brownian motion, as these don't have canonical analogues on the space of probability measures.
%The density of the quadratic variation process should reflect the geometric structure of \cite{Otto} and be given through the quadratic functional 
%$\scr P\ni\mu\mapsto \|Du(\mu)\|^2_{L^2(\RR^d\to\RR^d,\mu)}$ for suitable test functions $u:\scr P\to\RR$ with intrinsic gradient $Du(\mu)\in L^2(\RR^d\to\RR^d,\mu)$, $\mu\in\scr P$ (see below).

In \cite{RS07} a Dirichlet form approach is used to construct a Markov process
with the intention of a canonical diffusion on the set of probability measures $\scr P([0,1])$ over the closed unit interval.
This state space, equipped with the 2-Wasserstein distance, is isometric to $\scr G_0$, the set of right continuous, non-decreasing functions on the unit interval interval with the
topology and differential structure of $L^2([0,1])$.
The entropic measure, which is the stationary distribution of the dynamics, coincides with the law of a normalized Gamma process on $\scr G_0$. 
The Markov process associated to the respective gradient-type Dirichlet form, known as Wasserstein diffusion, solves the martingale problem for its generator.
The latter, in addition to a natural diffusion part, which corresponds to the Laplace operator on $\scr P([0,1])$, contains two drift terms: one originates from the
jumps of the Gamma process and the other is the generator of the heat flow on $[0,1]$ with Neumann boundary conditions.

A related model is the modified massive Arratia flow, proposed and analyzed in \cite{Kon17, Kon14}. It describes coalescing particles on the real line with a variable diffusivity, inversely proportional to their mass. \cite{D22} introduces a process on the 2-Wasserstein space over a closed Riemannian manifold of dimension greater or equal $2$, whose generator is analogous to that of the modified massive Arratia flow. The multi-dimensional version does not exhibit coalescence 
and is the superprocess of countable many independent, massive Brownian particles.
These dynamics and the Wasserstein diffusion share some characteristics, for example the
validity of Varadhan's short time asymptotic, and the expression for the quadratic variation of their martingale part matches the 
geometric structure of \cite{Otto}.

Regarding the quadratic variation of the martingale part, the diffusion process in this article is consistent with the aforementioned examples. However, the drift term is of a completely different type,
as we are interested in a stochastic version of the gradient flow associated with the generalized porous media equation on $\RR^d$.
Therefore, our drift term contains the gradient of a relevant energy functional $W:\D(W)\to\RR$, defined only on a subset of the absolutely continuous measures. 
The invariant measure differs largely
from the entropic measure (\cite{RS07}, \cite{St11}, \cite{St24}) or the Dirichlet-Ferguson measure (\cite{D22}, \cite{Shao11}) regarding its support properties. While those are random measures with almost surely no absolutely continuous part, we consider a weighted version of the Gaussian based random measure of \cite{RW22}. 
In our case, the event that $W$ is differentiable (this in particular requires the random measure to be absolutely continuous) has probability one.
We characterize our diffusion in relation to a noise term ${(R_t)}_{t\geq 0}$ originating from the Gaussian reference measure.
The resulting stochastic differential equation 
\begin{equation}\label{zero}
	\d\mu_t=-DW_t(\mu_t) \d t+\d R_t,\qquad t\geq 0,
\end{equation}
 is interpreted in the sense of a martingale solution for the generator.

The process ${(R_t)}_{t\geq 0}$ in this article has some similarities with the construction in \cite{Del}, where
a reflecting stochastic heat equation on the set of quantile functions of probability measures over $\RR$ is solved by means of a Euler scheme  alternating 
between time-incremental approximation steps for the classical stochastic heat equation and symmetric rearrangements.
On the other hand, our approach
 does not involve a one-to-one identification of the Wasserstein space with a subset of an $L^2$-space, since
 the push-forward  of a stochastic element from a suitable function space into the Wasserstein space is performed on the level of the equilibrium distribution.
Then, Dirichlet form techniques are used  to obtain the corresponding measure-valued Markov process.
 This approach is more versatile, as it  works in the multi-dimensional setting, i.e.~for probability measures over $\RR^d$, $d\geq 1$.
To our best knowledge, stochastic gradient dynamics as in \eqref{zero} for the energy functional of the generalized porous media equation have not been treated in the literature before.

We recall the notion of a gradient for functions on probability measures. 
Let $d\in\NN$ and $\scr P$ denote the set of all Borel probability measures on $\RR^d$.  
Following an approach as in \cite{AKR96, AKR98, R98, D20, BRW21, RRW22, RR23} the differential and the gradient of a function $W:\scr P\to\RR$ at a point $\mu\in\scr P$ may be assigned
by looking at the family of curves $\mu_{\varphi,\varepsilon}:=\mu\circ (\textnormal{id}+\varepsilon\varphi)^{-1}\in\scr P$, $\varphi\in C_b^1(\RR^d,\RR^d)$, parameterized by $\varepsilon\in\RR$.
Then, $W$ is differentiable at $\mu$ if and only if
\begin{equation}\label{eq:diff}
	D_\varphi W(\mu):=\tfrac{\d}{\d \varepsilon} W(\mu_{\varphi,\varepsilon})\big|_{\varepsilon=0}
\end{equation}
acts as a linear functional in its argument $\varphi$ and is continuous with respect to the
(trace) topology of $L^2(\RR^d\to\RR^d,\mu)$.
Behind this definition is the conception of $\scr P$ as an infinite-dimensional Riemannian-like structure with tangent bundle $(L^2(\RR^d\to\RR^d,\mu))_{\mu\in\scr P}$.
The inner product at $\mu\in\scr P$ is given by
\begin{equation*}
	\la \phi_1,\phi_2\ra_{L^2(\RR^d\to\RR^d,\mu)}:=\mu\big(\la\phi_1,\phi_2\ra\big):=\int_{\RR^d}\la\phi_1,\phi_2\ra\d\mu ,\qquad \phi_1,\phi_2\in L^2(\RR^d\to\RR^d,\mu),
\end{equation*}
where $\la\cdot,\cdot\ra$ denotes the Euclidean scalar product on $\RR^d$.
The intrinsic  gradient of $W$ at $\mu$ is then defined as the unique element $DW(\mu)\in L^2(\RR^d\to\RR^d,\mu)$ such that
\begin{equation}\label{eq:gradient}
	\la DW(\mu),\varphi\ra_{L^2(\RR^d\to\RR^d,\mu)}=	 D_\varphi W(\mu),\qquad \varphi\in C_b^1(\RR^d,\RR^d).
\end{equation} 
Regardless of a choice for a metric on $\scr P$, this can be taken as a definition of the gradient. 
We refer to \eqref{eq:diff} and \eqref{eq:gradient} as the intrinsic derivative, respectively the (intrinsic) gradient. 
A discussion about
the relation between the intrinsic and the extrinsic derivative can be found in \cite{RW20}.
For $p\in[1,\infty)$ and functions $W$ defined on the $p$-Wasserstein space 
\begin{equation*}
	\scr P_p:=\big\{\mu\in \scr P:\ \mu(|\cdot|^p) <\infty\big\}
\end{equation*}
the assignment $\varphi\mapsto D_\varphi W(\mu)$ as in \eqref{eq:diff}
is required to be continuous w.r.t.~the trace topology of $L^p(\RR^d\to\RR^d,\mu)$, rather than $L^2(\RR^d\to\RR^d,\mu)$, in order 
to define the gradient at $\mu$ (cf.~Definition \ref{D1} below).
This leads to a consistent notion of the gradient at $\mu$ (the class of differentiable functions depends on $p$, but not the value of $D_\varphi W(\mu)$ for $\varphi\in C_b^1(\RR^d,\RR^d)$ if defined). Naturally, \eqref{eq:diff} extends to all curves ${(\mu_{\varphi,\varepsilon})}_{\varepsilon\in\mathbb R}$ with $\varphi\in L^p(\RR^d\to\RR^d,\mu)$.

Sometimes, depending on the application, it is useful to consider an equivalent inner product at $\mu$, instead of the standard structure $L^2(\mathbb R^d\to\mathbb R^d,\mu)$,
leading to a perturbed value of the gradient $DW(\mu)$. For
a measurable weight function $\gamma:\RR^d\times\scr P\to[c^{-1},c]$, $c\in(0,\infty)$, we define 
\begin{equation*}
D^\gamma W(\mu):=\gamma(\cdot,\mu)DW(\mu).
\end{equation*}
This corresponds to defining the gradient of $W$ at $\mu$ based on the inner product of the weighted space $L^2(\RR^d\to\RR^d,\gamma(\cdot,\mu)^{-1}\mu)$.

By the findings in \cite{BR23}, under suitable conditions 
on the coefficients $\beta:\RR\to\RR$, $b:\RR\to\RR$ and $\Phi:\RR^d\to\RR$, the generalized porous media equation
	\begin{equation}\label{eq:gPME}
	\partial_t \rho=\Delta \beta(\rho)+\textnormal{div}\big((\nabla \Phi)b(\rho)\rho\big)\qquad\text{on } (0,\infty)\times\RR^d,
\end{equation}
has a mild solution $\rho(t,x)$, $t\in [0,\infty)$, $x\in\RR^d$, given by a nonlinear semigroup ${(S(t))}_{t\ge 0}$ of contractions in $L^1(\RR^d,\d x)$, i.e.
$\rho(t,\cdot)=S(t)\rho_0$ for an initial value $\rho(0,\cdot ):=\rho_0\in L^1(\RR^d,\d x)$. 
In \cite{BaR23} the uniqueness of this solution is shown in the largest class of solutions, namely the so-called distributional solutions. 
The positivity $\rho_0\ge 0$ and the mass $\int_{\RR^d}\rho_0(x)\d x$ of an initial value 
are preserved by $S(t)$. 
\eqref{eq:gPME} is an example from the class of nonlinear Fokker--Planck equations, which describe the evolution of the time marginal laws of 
solutions to distribution dependent stochastic differential equations, known as McKean--Vlasov SDEs. We refer to \cite{BaR24} (and also the references therein) for a comprehensive study of this topic.
By virtue of \cite{RR23} the curves  $\mu_t:=\rho(x,t)\d x\in\scr P$, $t\ge 0$, defined by the probability solutions to \eqref{eq:gPME} correspond
to the unique gradient flow on $\scr P$ satisfying
\begin{equation}\label{eq:I1}
	\frac{\d}{\d t}\mu_t=-D^\gamma W(\mu_t)
\end{equation}
w.r.t.~$\gamma(x,\rho\d x):=b(\rho(x))$ and an explicit energy functional $W$. The latter has a domain within the set of absolutely continuous measures on which
\begin{equation*}%\label{eq:FRR2}
	W(\rho\d x):=\int_{\RR^d} F(x,\rho(x))\d x
\end{equation*}
with
\begin{equation}\label{eq:FRR1}
	F(x,s):=s\Phi(x)+\int_0^s\int_1^t\frac{\beta'(r)}{rb(r)}\d rd t,\quad x\in\RR^d,\,s\in(0,\infty),
\end{equation}
is well-defined.
The energy functional $W$ coincides with the Lyapunov function determined in \cite{BR23} (see, in particular, \cite[Thm.~4.1]{BR23} for the corresponding energy dissipation inequality 
\cite[(4.10)]{BR23}).
For $\Phi=0$, $b=\beta'=1$, \eqref{eq:gPME} reduces to the heat equation with $W(\rho\d x):=\int_{\RR^d}(\ln(\rho)-1)\rho\d x$ being the entropy functional.
If a suitable space of test functions $u$ is given,
\eqref{eq:I1} can be reformulated into
\begin{equation*}
	\frac{\d}{\d t}u(\mu_t)=-\int_{\RR^d}\la\gamma_{\mu_t} DW(\mu_t),Du(\mu_t)\ra\d\mu_t,
\end{equation*}
where we set $\gamma_\mu:=\gamma(\cdot,\mu)$.

\subsection{From deterministic to stochastic gradient flows}
We are interested in constructing stochastic gradient flows for energy functionals of the type 
$W_F(\mu):= \int_{\RR^d}F(x,\rho_\mu(x))\d x$, $\mu=\rho_\mu\d x$, with  $F:\R^d\times [0,\infty)\to\RR$ for example as in \eqref{eq:FRR1}.
Adding a stochastic process $({R_t})_{t\ge 0}$ to its right-hand side transforms \eqref{eq:I1} into a stochastic differential equation
\begin{equation}\label{eq:SGF0}\d \mu_t=  -D^\gamma W_F(\mu_t)\d t+ \d R_t,\qquad t\ge 0.\end{equation} 
In the main part of this article we work with an arbitrary choice $p\in[1,2]$ and
let ${(R_t)}_{t\ge 0}$ be a symmetric Markov process with state space $\scr P_p$ and invariant measure $\Lam$, which is induced by a restricted Ornstein--Uhlenbeck process
as explained below. For some particular results we focus on the most relevant case $p=2$ for simplicity.
Denoting the generator of ${(R_t)}_{t\ge 0}$ by $A$ we reformulate \eqref{eq:SGF0} as a martingale problem for the generator of ${(\mu_t)}_{t\ge 0}$.
In this article, a solution to \eqref{eq:SGF0} is constructed in the sense of a right process
$(\Omega, \scr F,(\mu_t)_{t\geq 0},(\P^\mu)_{\mu\in\scr P_p})$ on $\scr P_p$ for which, 
\begin{equation}\label{eq:M}
	u(\mu_t)-u(\mu_0)-\int_0^t Au(\mu_s)\d s-\mu_s\big(\gamma_{\mu_s}\la DW_F(\mu_s),Du(\mu_s)\ra\big)\d s,\qquad t\geq 0,
\end{equation}
is a martingale for  $u\in\D(A)$ and starting points $\mu$ up to a set of zero capacity.
There is no Brownian motion on the set of probability measures. Hence, we do not have a canonical candidate for ${(R_t)}_{t\ge 0}$.
A substitute must be chosen with care, because the energy functional $W_F$
does not have an intrinsic derivative $DW_F$ defined at all points of $\scr P_p$, but only within a certain subset of absolute continuous measures.
It is a priori not clear how ${(R_t)}_{t\ge 0}$ can be defined in a way that martingale solutions to \eqref{eq:SGF0} in the sense of \eqref{eq:M} exist.

We utilize the method in \cite{RW22, RWW24} by which
a Markov process ${(\phi_t)}_{t\ge 0}$ on $L^p(\RR^d\to\RR^d,\lam)$ for fixed $\lam\in\scr P_p$ induces a Markov process on $\scr P_p$.
Here, we take ${(\phi_t)}_{t\ge 0}$ as a restricted Ornstein--Uhlenbeck process on
$L^p(\RR^d\to\RR^d,\lam)$ in such a way that \eqref{eq:M} becomes solvable with $A$ being the generator of the induced process ${(R_t)}_{t\ge 0}$ on $\scr P_p$.
The test function $u$ must be an element in the $L^2$-domain $\D(A)$ of the generator, which makes $(A,\D(A))$ a non-positive self-adjoint operator in $L^2(\scr P_p,\Lam)$.
The Dirichlet form of ${(R_t)}_{t\ge 0}$ is given by
\begin{equation}\label{eq:genA}
	\la -Au,v\ra_{L^2(\Lam)}=\int_{\scr P_p}\mu\big(\gamma_\mu\la Du(\mu),Dv(\mu)\ra\big)\Lam(\d\mu),\qquad u,v\in\D(A).
\end{equation}

% \subsection{Solution via stochastic quantization}
Let us now briefly point out how to find a solution to \eqref{eq:M} once a suitable ${(R_t)}_{t\ge 0}$ has been fixed. 
%Our approach is analogous to solving a classical stochastic gradient flow
%\begin{equation*}
%	\d X_t=-\nabla W(X_t)\d t+\d B_t
%\end{equation*}
We consider a perturbation
\beq\label{eq:LW}\LL_F(\d\mu):= \ff 1 {Z_F} \e^{-W_F(\mu)}\LL(\d\mu)\end{equation}
of the invariant measure $\Lam$, where for $\mu=\rho_\mu\d x$ we set
\begin{equation}\label{eq:W}W_F(\mu):=\beg{cases} \int_{\RR^d}F(x,\rho_\mu(x))\d x\ &\text{if}\ F(\cdot,\rr_\mu)\in
L^1(\R^d,\d x),\\
\infty\ &\text{otherwise,}\end{cases}  \end{equation}
and \begin{equation*}Z_F:= \int_{\scr P_p} \e^{-W_F}\d\LL.\end{equation*}
Our choice of ${(R_t)}_{t\ge 0}$ respectively $\Lambda$ (see below) ensures that $0<Z_F<\infty$.
The calculus for  the square-field operator
\begin{equation*}
	\Gamma(u,v)(\mu):=\mu\big(\gamma_\mu\la Du(\mu),Dv(\mu)\ra\big),
\end{equation*}
together with \eqref{eq:genA} leads to
\begin{equation}\label{eq:genB}
	\big\la -Au+\mu\big(\gamma_\mu\la DW_F(\mu),Du(\mu)\ra\big),v\big\ra_{L^2(\Lam_F)}=\int_{\scr P_p}\Gamma(u,v)\Lam_F(\d\mu)
\end{equation}
under suitable assumptions on $F,u,v$ (cf.~Lemma \ref{lem:generator} and the proof of Corollary  \ref{cor:SFG} below). So, formally a $\Lam_F$-symmetric process ${(\mu_t)}_{t\ge 0}$ solves \eqref{eq:M} if its Dirichlet form $\EE^F$ reads
\beq\label{eq:Dform}\EE^F(u,v):= \int_{\scr P_p }\GG(u,v) \d\LL_F.\end{equation}
This method is well-known in the theory of Dirichlet forms and has been applied in many different settings, e.g.~in \cite{AHS,F81, AR91, RZ92, E96, BG14}.
The process ${(\mu_t)}_{t\ge 0}$ can be obtained by: (1) proving closability of the bilinear form $\EE^F$ in \eqref{eq:Dform} on a core of differentiable functions and (2) proving quasi-regularity 
(see \cite[Chap.~IV]{MR92}) of the minimal closed extension of $\EE^F$ in $L^2(\scr P_p,\Lam_F)$.

 %\subsection{Construction of $R_t$} 
To realize this plan, we choose ${(R_t)}_{t\ge 0}$ and $\Lam$ as follows (assuming $\gamma(\cdot,\cdot)=1$ in this paragraph for simplicity).
First, a non-degenerate Gaussian measure $G$ on $$\{\phi\in C^1(\RR^d,\RR^d):\|\nn \phi\|_\infty<\infty\}$$ is fixed,
where 
\begin{equation*}
\|\nn \phi\|_\infty=\sup_{x\ne y}\ff{|\phi(x)-\phi(y)|}{|x-y|},\qquad \phi:\RR^d\to\RR^d.
\end{equation*}
Then, $G$ assigns a strictly positive value to the set
\begin{align}\label{eq:DDp}
\DDp:=\Big\{\phi\in C^1(\RR^d,\RR^d): \, \phi\ \text{is\ invertible, } \phi^{-1}\in C^1(\RR^d,\RR^d),&\\ 
\text{and }\|\nn \phi\|_\infty+\|\nn (\phi^{-1})\|_\infty<\infty & \Big\},\nonumber
\end{align} 
see Remark \ref{rem:DDnew}.
Let $\ll\in \scr P_p$ be absolutely continuous
(by the observation in Remark \ref{rem:Gauss} it doesn't matter which element $\lam$ with that property we fix here). 
A Gaussian-based measure $\LL$ is defined as the push-forward of the probability measure
$\ff{\eins_{\DDp}(\phi)G(\d\phi)}{G(\DDp)}$  
under the map \begin{equation*}\Psi_\lam:\DDp\ni \phi\mapsto\lam\circ\phi^{-1}\in\scr P_p.\end{equation*}
Since $\DDp\subset L^p(\RR^d\to\RR^d)$ for $p\in[1,2]$ as well as
\begin{equation*}\big(L^p(\RR^d\to\RR^d,\lam)\big)^*\subseteq \big(L^2(\RR^d\to\RR^d,\lam)\big)^*=L^2(\RR^d\to\RR^d,\lam)\subseteq L^p(\RR^d\to\RR^d,\lam),\end{equation*}
the classical Gaussian gradient-type bilinear form on $\DDp$ is defined as
\begin{equation*}
	\tilde\EE(f,g)=\int_{\DDp}\la\nabla f(\phi),\nabla g(\phi)\big\ra_{L^2(\RR^d\to\RR^d\lam)}\ff{\eins_{\DDp}(\phi)G(\d\phi)}{G(\DDp)}
\end{equation*}
for $f,g$ in a suitable pre-domain, see \cite[Sect.~II.3]{MR92}. It has a minimal closed extension $(\tilde\EE,\D(\tilde\EE))$ in $L^2(\DDp,\ff{\eins_{\DDp}G}{G(\DDp)})$
and there is a restricted Ornstein--Uhlenbeck process ${(\phi_t)}_{t\ge 0}$  associated with the Dirichlet form $(\tilde\EE,\D(\tilde\EE))$.
Following the concept of \cite{RW22, RWW24} there exists a diffusion process ${(R_t)}_{t\ge 0}$ on $\scr P_p$ whose associated Dirichlet form $(\EE,\D(\EE))$ in $L^2(\scr P_p,\Lam)$
is given by
\begin{equation*}
	\EE(u,v)=\int_{\scr P_p}\la Du(\mu), Dv(\mu)\big\ra_{ L^2(\RR^d\to\RR^d,\mu)}\Lam(\d\mu)
\end{equation*}
for $u,v$ in a suitable pre-domain.
${(R_t)}_{t\ge 0}$ is referred to as the induced process of ${(\phi_t)}_{t\ge 0}$ in this text, because $\EE$ coincides with the image Dirichlet form of $\tilde\EE$ under $\Psi_\lam$ on its domain $\D(\EE)$.

The set-up described in the paragraph above is sufficient to prove the existence of a diffusion on $\scr P_p$ whose Dirichlet form is as in \eqref{eq:Dform}. Formally, the process solves \eqref{eq:SGF0}. 
To ensure the integrability of the drift term we make an amendment regarding the definition of $\DDp$.
By ${(R_t^{(n)})}_{t\ge 0}$ we denote the Markov process induced by a restricted Ornstein--Uhlenbeck process in the same way as above, but regarding a Gaussian measure 
restricted to
\begin{equation*}
	\D^{(n)}:=\Big\{\phi\in \DDp\cap C^2(\RR^d,\RR^d): 
	|\phi(0)|+\|\na\phi\|_\infty+\|\na^2\phi\|_\infty+\|\nn(\phi^{-1})\|_\infty<n\Big\}
\end{equation*}
instead of $\DDp$, where $n\in\NN_{\geq 3}$ is arbitrary but fixed. The existence of a Gaussian measure which assigns a positive value to the event $\D^{(n)}$ is discussed in Section \ref{sec:Loc}. We solve \eqref{eq:M} for $A=A^{(n)}$, the generator of ${(R_t^{(n)})}_{t\ge 0}$.

\subsection{Outline and main results}
The functional $W_F$ in \eqref{eq:W} with $F$ as in \eqref{eq:FRR1} is of major importance, 
since $W_F$ is a Lyapunov function for the generalized porous media equation (see \cite{BR23}).
In this context, our main results can be summarized as follows.
\begin{itemize}
\item We first define a class of Gaussian-based measures $\Lam$ on $\scr P_p$, $p\in[1,2]$, (see Section \ref{ssec:Gauss})
and then construct a stochastic quantization for the measure $\Lam_F$, i.e.~a symmetric diffusion process ${(\mu_t)}_{t\ge 0}$ on $\scr P_p$ with invariant measure $\Lam_F$ as given in \eqref{eq:LW},
by showing:
\begin{itemize}
	\item The bilinear form $\EE^F$ in \eqref{eq:Dform} with the domain of bounded, continuously differentiable functions on $\scr P_p$ (see Definition \ref{D1}) is well-defined and closable in 
	$L^2(\scr P_p,\Lam_F)$.
	\item Its minimal closed extension $(\EE^F,\D(\EE^F))$ is a 
	local, conservative and quasi-regular Dirichlet form in $L^2(\scr P_p,\Lam_F)$.
	Thanks to the one-to-one correspondence 
	between the family of local, quasi-regular Dirichlet forms and the family of diffusion processes on a topological Lusin space
	 (see \cite[Chap.'s~IV \& V]{MR92}), we obtain a ($\Lam_F$-symmetric) conservative diffusion process 
	$\mathbf M=(\Omega, \scr F,(\mu_t)_{t\geq 0},(\P^\phi)_{\phi\in \scr P_p})$ 
	 on $\scr P_p$, properly associated with $(\EE^F,\D(\EE^F))$.
\end{itemize}
The two statements above are proven in Theorem \ref{thm:main} and Proposition \ref{prp:basicP}. %Concerning the proof of quasi-regularity we apply a criterion found in \cite{RWW24}.
The case with $F$ as in \eqref{eq:FRR1} is treated in Example \ref{exa:F2}, where we choose $p=2$.

\item Under suitable conditions the Gaussian-based measure $\Lam$ has the property that the Radon--Nikodym derivative  $\rr_\mu=\ff {\d\mu}{\d x}$ exists and is Lipschitz continuous for $\LL\text{-a.e.~}\mu$.  The  gradient
of the energy functional for the generalized porous media equation \eqref{eq:gPME} is defined for $\LL\text{-a.e.~}\mu$ 
and computed as
\begin{equation*}
	D W_F(\mu)= \na \Phi+\frac{\beta'(\rho_\mu)\na\rho_\mu}{b(\rho_\mu)\rho_\mu}
\end{equation*} in the sense of a local weak intrinsic gradient  (see Definition \ref{def:LIWD}, Theorem \ref{thm:localDom} and the subsequent discussion). 
\item For each $n\in\NN_{\geq 3}$ there is a Gaussian-based measure  $\LL^{(n)}$ (see Section \ref{sec:Loc}, in particular Corollary \ref{cor:locSGH}) such that:
\begin{itemize}
\item  The resulting process ${(\mu^{(n)}_t)}_{t\ge 0}$ formally solves 
\begin{equation}\label{eq:SGFn}
\d \mu^{(n)}_t=  -DW_F(\mu_t^{(n)})\d t+ \d R^{(n)}_t,\qquad t\ge  0,
\end{equation}
where ${(R_t^{(n)})}_{t\ge 0}$ is induced by a restricted Ornstein--Uhlenbeck process on $\D^{(n)}$ in the sense explained above.
\item  $E_n:=\supp[\LL^{(n)}]$ are monotone increasing in $n$ and $\bigcup_{n\geq 3}E_n\subset\scr P_2$ is dense.
\end{itemize}
\end{itemize}

% \subsection{Outline}
The remainder of this article is organized as follows.
Section \ref{ssec:general} contains a summary of preliminaries on strongly local Dirichlet forms and their connection with diffusion processes.
In Section \ref{ssec:indT} we state a quasi-regularity result related to the intrinsic derivative for Dirichlet forms on the Wasserstein space.
The class of suitable reference measures $\Lam$ on $\scr P_p$, including Gaussian-based measures, are introduced in Section \ref{ssec:Gauss}.
This framework is applied in Section \ref{sec:EF} to
prove the existence of a diffusion with the associated Dirichlet form  in \eqref{eq:Dform} for the energy functional given  in \eqref{eq:W} (Theorem \ref{thm:main}).
Localized versions of the Dirichlet form in \eqref{eq:Dform} are introduced in Section \ref{sec:Loc}. As a consequence, we obtain the local weak gradient of the functional \eqref{eq:W} and show \eqref{eq:SGFn} (Theorem \ref{thm:localDom} and Corollary \ref{cor:locSGH}).

\section{Diffusion processes  on Wasserstein spaces}\label{sec:diffusion}

To develop stochastic analysis on a state space of measures it is crucial to find reasonable measure-valued  processes which may play the role 
of Brownian motion to drive  SDEs.
In the literature, several types of  measure-valued diffusion processes are  constructed 
by means of (quasi-)regular, local Dirichlet forms in $L^2$-spaces which have reference probabilities  supported on the set of singular distributions, see  \cite{D22, RW20, RS07, St24} and the references therein. Moreover, measure-valued diffusion processes over $\R$ with strong Feller properties are analyzed in \cite{Del, D20, VWW}.

In this section  we introduce a new class of diffusions on the Wasserstein space.
The most important examples of this class are processes with a Gaussian-based invariant measure.

\subsection{Preliminaries}\label{ssec:general}

Let $E$ be a metrizable Lusin topological space. We denote its Borel $\sigma$-algebra by $\B(E)$ and  fix a probability measure $\Lam$  on $(E,\B(E))$.
Given a measurable function $f:E \to\RR$, the corresponding $\Lam$-class of measurable functions is again denoted by $f$. The positive part of $f$ is denoted by $f^+$.
We set $f\land g:=\min\{f,g\}$ for  $f,g:E \to\RR$.
All vector spaces in this text are assumed to be real.
Let  $\D(\EE) $ be a dense subspace of   $L^2(E,\Lam)$ and $$\EE:\D(\EE)\times\D(\EE)\to\RR$$ be a non-negative definite, symmetric bilinear map.   
$(\EE,\D(\EE))$ is called closed if $\D(\EE)$ is complete under the $\EE_1^{1/2}$-norm which is induced by the inner product
$$\EE_1(u,v):=\EE(u,v)+{\la u,v\ra}_{L^2(E,\Lam)}.$$
It  is    called a symmetric Dirichlet form if it is closed and 
$$u^+\land \eins \in\D(\EE)\qquad \text{with}\qquad  \EE(u^+\land \eins,u^+\land \eins)\leq \EE(u,u)$$
for $u\in\D(\EE)$.

Since $E$ has the strong Lindel\"of property, the support of a positive measure on $(E,\B(E))$ is well-defined and denoted by $\supp[\cdot]$. For a measurable function $f:E\to\RR$ we set
$\supp[f]:=\supp[|f|\Lam]$.
A symmetric Dirichlet form $(\EE,\D(\EE))$ is said to possess the local property if
\begin{equation*}
	\EE(u,v)=0\qquad\text{for }u,v\in\D(\EE):\supp[u]\cap\supp[v]=\varnothing.
\end{equation*}
$(\EE,\D(\EE))$  is said to possess the strong local property if
\begin{equation*}
	\EE(u,v)=0\qquad\text{for }u,v\in\D(\EE):u\textnormal{ is constant }\Lam\text{-a.e.~on }\supp[v].
\end{equation*}
We call $(\EE,\D(\EE))$ conservative if $\eins\in \D(\EE)$ and $\EE(\eins,\eins)=0$. In the conservative case the strong local and the local property are equivalent.
%If there exists a sequence ${(u_n)}_{n}\subset\D(\EE)$ such that $\lim_{n\to\infty}u_n=\eins_E$ holds $\Lam$-a.e.~and $\lim_{n\to\infty}\EE(u_n,u_n)=0$,
%then $(\EE,\D(\EE))$ is called recurrent.
%In this article, the term ¡°Dirichlet form¡± in $L^2(E,\Lam)$ (¡°on $E$¡± for short) signifies a closed symmetric form in $L^2(E,\Lam)$ which is Markovian.
%A Markovian symmetric form, for which there exists a closed extension, is sometimes referred to as ¡°pre-Dirichlet form¡±.

Let $(\EE,\D(\EE))$ be a Dirichlet form on $E$. If there exists a bilinear map
$$\Gamma:\D(\EE)\times\D(\EE)\to L^1(E,\Lam)$$ such that
\begin{equation}\label{eq:squareF}
	\EE(uw,v)+\EE(vw,u)-\EE(w,uv)=2\int_Ew\Gamma(u,v)\d\Lam,\quad u,v,w\in\D(\EE)\cap L^\infty(E,\Lam),
\end{equation}
then $(\Gamma,\D(\EE))$ is called the square-field operator of $(\EE,\D(\EE))$. In the conservative case \eqref{eq:squareF} implies
\begin{equation}\label{eg:Gammascale}
 \EE(u,v)=\int_E \GG(u,v)\d\LL,\qquad u,v\in \D(\EE).\end{equation}
Let it be remarked that in some textbooks the definition of $\Gamma$ includes a scaling factor $\frac 1 2 $ on the right-hand side of \eqref{eg:Gammascale}.
We note that \eqref{eq:squareF} is true if it is satisfied for all $u,v,w\in \scr A$
for some subalgebra $\scr A$ of $\D(\EE)\cap L^\infty(E,\Lam)$ which is dense w.r.t.~$\EE_1^{1/2}$-norm.
Moreover, \eqref{eq:squareF} implies continuity of $\Gamma$ (w.r.t.~$\EE_1^{1/2}\times\EE_1^{1/2}$).
%Both, the bilinear mapping $\EE$ and its square field operator $\Gamma$ extend naturally to the domain
%\begin{multline*}
%	\D_\textnormal{e}(\EE):=\Big\{u\in L^0(E,\Lam):u=\lim_{n\to\infty}u_n,\,\Lam\text{-a.e., and }\limsup_{m,n}\EE(u_m-u_n,u_m-u_n)= 0\\\text{for some sequence }\{u_n\}_n\in\D(\EE)\Big\}
%\end{multline*}
We set
\begin{equation*}
\D_{b,\lip}(\EE):=\big\{u\in\D(\EE)\cap L^\infty(E,\Lam):\Gamma(u,u)\in L^\infty(E,\Lam)\big\}.
\end{equation*}
According to   the results in \cite[Sect.~I.5]{BH91} a strongly local symmetric Dirichlet form with square-field operator
$\GG$ satisfies
\begin{equation*}
vw\in\D(\EE)\qquad\text{and}\qquad \Gamma(u,vw)=v\Gamma(u,w)+w\Gamma(u,v)\in L^1(E,\Lam)
\end{equation*}
for $u,v\in\D(\EE)$ and $w\in \D_{b,\lip}(\EE)$.
%for $w\in\D_{b,\lip}(\EE)$. This is a consequence of the  \cite[Sect.~I.5]{BH91}.
%(see \cite[Prop.~6.1.1]{BH91} for the first and \cite[Cor.~6.1.3 \& Prop.~6.2.]{BH91} for the second statement).

The generator of $(\EE,\D(\EE))$ is the unique non-positive definite, self-adjoint
operator \sloppy$(L,\D(L))$ in $L^2(E,\Lam)$ such that $\D(\EE)=\D(\sqrt{-L})$ and $\EE(u,v)={\la\sqrt{-L}u,{\sqrt{-L}v}\ra}_{L^2(E,\Lam)}$. It is a Dirichlet operator,
i.e.~${\la Lu,(u-1)^+\ra }_{L^2(E,\Lam)}\leq 0$ for $u\in\D(L)$, and the infinitesimal generator of a strongly continuous semigroup ${(T_t)}_{t\geq 0}$ 
of $\Lam$-symmetric, sub-Markovian operators in $L^2(E,\Lam)$ with
\begin{equation*}
0\leq T_tu\leq 1\quad (\Lam \text{-a.e.})\quad\text{whenever}\quad 0\leq u\leq 1\quad(\Lam\text{-a.e.}).
\end{equation*}
The family of Dirichlet forms in $L^2(E,\Lam)$ and the family strongly continuous semigroups ${(T_t)}_{t\geq 0}$
which consist of $\Lam$-symmetric, sub-Markovian operators stand in one-to-one correspondence. $(\EE,\D(\EE))$ conservative if and only if  $T_t\eins=\eins$.

The operators $(T_t)_{t\ge 0} $ are contractive w.r.t.~$\|\cdot\|_{L^p(E,\Lam)}$ for $p\in [1,\infty]$ and hence uniquely determine a
strongly continuous contraction semigroup in $L^p(E,\Lam)$ for $p\in[1,\infty)$ (the continuity of this semigroup can be shown if $p<\infty$). Any of these, on their domain $L^p(E,\Lam)$, coincide with the extension of $(T_t)_{t\ge 0} $ to $L^1(E,\Lam)$, because $L^p(E,\Lam)\subset  L^1(E,\Lam)$.
Therefore, we do not distinguish  in notation and write  ${(T_t)}_{t\geq 0}$ indifferent of the index $p$.
Its infinitesimal generator in $L^1(E,\Lam)$ coincides with the smallest closed extension of $(L,\D(L))$ in $L^1(E,\Lam)$, see \cite[Prop.~2.4.2]{BH91}, and is denoted by
$(L,\D_{\textnormal{op,}L^1}(L))$ in the following paragraph.

Dirichlet forms are used to analyze symmetric Markov processes. The behaviour 
under a modification of the invariant measure through the multiplication with a density is, for example, discussed in \cite{AR91, T95, E96, F97}.
We include an elementary and easy-to-prove result on the behaviour of the generator under such a transform here, as it is relevant for Sections \ref{sec:EF} \& \ref{sec:Loc}.
Let $\Lam_\circ$ be another probability measure on $(E,\B(E))$ and $(\EE^\circ,\D(\EE^\circ))$
be a Dirichlet form in $L^2(E,\Lam_\circ)$.
Furthermore, we assume:
\begin{itemize}
	\item[(a)] $\Lam$ is absolutely continuous w.r.t.~$\Lam_\circ$ with Radon--Nikodym density satisfying
	%\begin{equation*}
	$\varrho:=\frac{\d\Lam}{\d\Lam_\circ}>0$ $\Lam_\circ$ a.e.~and $\varrho\in\D(\EE^\circ)$.
	%\end{equation*}
	\item[(b)] $(\EE^\circ,\D(\EE^\circ))$ is a conservative, strongly local Dirichlet form  with generator $(A,\D(A))$ and  square-field operator $\Gamma$.
	 $(\EE,\D(\EE))$ is a Dirichlet form in $L^2(E,\Lam)$ such that there exists a subspace $\scr L\subseteq\D_{b,\lip}(\EE^\circ)\cap\D(\EE)$, densely included in $\D(\EE)$ w.r.t.~$ \EE_1^{1/2}$
	 and
	%\begin{equation*}
		$\EE(u,v)=\int_E\Gamma(u,v)\d\Lam$, $u,v\in\scr L.$
	%\end{equation*}
\end{itemize}

\noindent As above, the generator of the sub-Markovian semigroup ${( T_t)}_{t\geq 0}$ in $L^1(E,\Lam)$ corresponding to $( \EE,\D( \EE))$ is denoted by $( L,\D_{\textnormal{op,}L^1}(L))$.

\begin{lem}\label{lem:generator} Assume \textnormal{(a)} and \textnormal{(b)}. 
	We have
	\begin{equation*}
		\D(\EE)\cap\D(A)\subseteq \D_{\textnormal{op,}L^1}(L)\qquad\text{and}\qquad Lu=A u+\varrho^{-1}\Gamma(u,\varrho),\quad u\in \D(\EE)\cap\D(A).
	\end{equation*}
\end{lem}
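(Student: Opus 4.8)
Fix $u\in\D(\EE)\cap\D(A)$ and set $g:=Au+\varrho^{-1}\Gamma(u,\varrho)$. The plan is to show that $g\in L^1(E,\Lam)$, to establish the weak identity
\[
\EE(u,v)=-\langle g,v\rangle_{L^2(\Lam)}\quad\text{for all }v\in\D(\EE)\cap L^\infty(E,\Lam),
\]
and then to read off $u\in\D_1(L)$ with $Lu=g$ from the description of the $L^1$-generator. Integrability of $g$ is immediate: since $u\in\D(A)\subseteq\D(\EE^\circ)$ and $\varrho\in\D(\EE^\circ)\subseteq L^2(E,\Lam_\circ)$, Cauchy--Schwarz gives $\int_E|Au|\,\d\Lam=\int_E|Au|\,\varrho\,\d\Lam_\circ\le\|Au\|_{L^2(\Lam_\circ)}\|\varrho\|_{L^2(\Lam_\circ)}<\infty$, and since the square-field operator of $(\EE^\circ,\D(\EE^\circ))$ takes values in $L^1(\Lam_\circ)$, one has $\int_E|\varrho^{-1}\Gamma(u,\varrho)|\,\d\Lam=\int_E|\Gamma(u,\varrho)|\,\d\Lam_\circ<\infty$.

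The heart of the argument is a computation, done first for test functions $v\in\scr L$. As $v\in\D_{b,\lip}(\EE^\circ)$ and $\varrho\in\D(\EE^\circ)$, the product rule for strongly local Dirichlet forms yields $v\varrho\in\D(\EE^\circ)$ and the pointwise Leibniz identity $\Gamma(u,v\varrho)=v\,\Gamma(u,\varrho)+\varrho\,\Gamma(u,v)$. Combining $u\in\D(A)$ with the conservativeness of $(\EE^\circ,\D(\EE^\circ))$,
\begin{align*}
-\langle Au,v\rangle_{L^2(\Lam)}&=-\langle Au,v\varrho\rangle_{L^2(\Lam_\circ)}=\EE^\circ(u,v\varrho)=\int_E\Gamma(u,v\varrho)\,\d\Lam_\circ\\
&=\int_E v\,\Gamma(u,\varrho)\,\d\Lam_\circ+\int_E\Gamma(u,v)\,\d\Lam .
\end{align*}
Since $\langle\varrho^{-1}\Gamma(u,\varrho),v\rangle_{L^2(\Lam)}=\int_E v\,\Gamma(u,\varrho)\,\d\Lam_\circ$, subtracting gives $-\langle g,v\rangle_{L^2(\Lam)}=\int_E\Gamma(u,v)\,\d\Lam$. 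By hypothesis $(b)$ the last integral equals $\EE(u,v)$ when $u\in\scr L$; for general $u\in\D(\EE)\cap\D(A)$ one approximates $u$ by $u_n\in\scr L$ in $\EE_1^{1/2}$-norm and passes to the limit, using $|\int_E\Gamma(u_n-u,v)\,\d\Lam|\le\|\Gamma(v,v)\|_\infty^{1/2}(\int_E\Gamma(u_n-u,u_n-u)\,\d\Lam)^{1/2}$ together with the fact that $\EE$ restricted to $\scr L$ is represented by $\int_E\Gamma(\cdot,\cdot)\,\d\Lam$, so that the $\EE_1$-Cauchy property of $(u_n)$ controls $\int_E\Gamma(u_n-u,u_n-u)\,\d\Lam$. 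Thus $\EE(u,v)=-\langle g,v\rangle_{L^2(\Lam)}$ for all $v\in\scr L$, and since $\scr L$ is $\EE_1^{1/2}$-dense in $\D(\EE)$, a truncation argument (approximants with $\|v_n\|_\infty\le\|v\|_\infty+1$, together with $g\in L^1(\Lam)$) extends the identity to all $v\in\D(\EE)\cap L^\infty(E,\Lam)$.

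To conclude, put $f:=u-g\in L^1(E,\Lam)$, so that $\EE(u,v)+\langle u,v\rangle_{L^2(\Lam)}=\langle f,v\rangle_{L^2(\Lam)}$ for all $v\in\D(\EE)\cap L^\infty$. The order-one resolvent $(1-L)^{-1}$ of $(\EE,\D(\EE))$ is $L^2(\Lam)$-symmetric and sub-Markovian, maps $L^\infty(\Lam)$ into $\D(L)\cap L^\infty(\Lam)$, and its $L^1(\Lam)$-extension has range $\D_1(L)$ with $(1-L)\,(1-L)^{-1}h=h$. For any $\psi\in L^\infty(\Lam)$, the symmetry of $(1-L)^{-1}$, the weak identity with $v=(1-L)^{-1}\psi$, and the relation $\EE(u,w)+\langle u,w\rangle_{L^2(\Lam)}=\langle u,(1-L)w\rangle_{L^2(\Lam)}$ valid for $w\in\D(L)$ give
\begin{align*}
\langle (1-L)^{-1}f,\psi\rangle_{L^2(\Lam)}&=\langle f,(1-L)^{-1}\psi\rangle_{L^2(\Lam)}\\
&=\EE\big(u,(1-L)^{-1}\psi\big)+\langle u,(1-L)^{-1}\psi\rangle_{L^2(\Lam)}=\langle u,\psi\rangle_{L^2(\Lam)} .
\end{align*}
As $\psi$ was arbitrary, $(1-L)^{-1}f=u$; hence $u\in\D_1(L)$ with $(1-L)u=f$, i.e.~$Lu=u-f=g=Au+\varrho^{-1}\Gamma(u,\varrho)$. (Alternatively one may truncate $f$ to $f_k\in L^2(\Lam)$, observe that $u_k:=(1-L)^{-1}f_k\in\D(L)$ satisfies $Lu_k=u_k-f_k\to u-f=g$ and $u_k\to(1-L)^{-1}f=u$ in $L^1(\Lam)$, and invoke that $(L,\D_1(L))$ is the smallest closed extension of $(L,\D(L))$ in $L^1(E,\Lam)$.)

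I expect the main obstacle to be the identification $\EE(u,v)=\int_E\Gamma(u,v)\,\d\Lam$ for $u\in\D(\EE)\cap\D(A)$ beyond the core $\scr L$: an $\EE_1^{1/2}$-approximation of $u$ within $\scr L$ need not converge in the topology of $\D(\EE^\circ)$ in which $\Gamma$ is continuous, so one must use strong locality (so that the square-field operators of $\EE$ and of $\EE^\circ$ agree on $\D(\EE)\cap\D(\EE^\circ)$) together with the bound $\|\Gamma(v,v)\|_\infty<\infty$ available for $v\in\scr L$. Everything else --- the square-field calculus and the standard passage between the $L^2$- and $L^1$-theory of symmetric Dirichlet forms --- is routine.
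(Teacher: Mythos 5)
Your argument is essentially the paper's: the same product-rule computation $\Gamma(u,v\varrho)=v\Gamma(u,\varrho)+\varrho\Gamma(u,v)$ combined with $\EE^\circ(u,v\varrho)=-\langle Au,v\varrho\rangle_{L^2(\Lam_\circ)}$ yields the weak identity $\EE(u,v)=-\langle g,v\rangle_{L^2(\Lam)}$ on $\scr L$, which is then upgraded to membership in $\D_1(L)$; the only difference is that you close the argument with the $L^1$-extension of the resolvent $(1-L)^{-1}$ where the paper uses the semigroup $T_t$ together with closedness of $(L,\D_1(L))$ --- an immaterial variation. One remark: the step you yourself flag, namely extending $\EE(u,\cdot)=\int_E\Gamma(u,\cdot)\,\d\Lam$ from $u\in\scr L$ to $u\in\D(\EE)\cap\D(A)$, is indeed not closed by your approximation as written (the $\EE_1$-Cauchy property controls $\int_E\Gamma(u_n-u_m,u_n-u_m)\,\d\Lam$, not $\int_E\Gamma(u_n-u,u_n-u)\,\d\Lam$, since $u_n-u\notin\scr L$ and the limit of $\Gamma(u_n,\cdot)$ need not a priori be the $\EE^\circ$-square-field of $u$); however, the paper's own proof passes over exactly this identification in silence, relying on it being automatic in the intended applications where $\Gamma$ is also the square-field operator of $\EE$ itself, so your proposal is on the same footing as the original.
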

\begin{proof} We fix $u\in\D(\EE)\cap\D(A)$. For any $v\in\scr L$ it holds
	\begin{equation*}
		\int_E\Gamma(u,v)\d \Lam=\int_E\Gamma(u,v\varrho)-v\Gamma(u,\varrho)\d\Lam_\circ
		=\int_E\big(-Au-\varrho^{-1}\Gamma(u,\varrho)\big)v\d\Lam.
	\end{equation*}
	Thus, the equality $\EE(u,v)=\int_Ef_uv\d\Lam$ with $f_u:=-Au-\varrho^{-1}\Gamma(u,\varrho)$ holds for all $v\in\D( \EE)\cap L^\infty(E,\Lam)$
	by density of $\scr L$.
	In particular,
	\begin{equation*}
		\int_E( L T_tu)v\d\Lam=
		\int_E Lu T_tv\d\Lam=-\EE (u, T_tv)=
		\int_E(-f_u) T_tv\d\Lam=
		\int_E(- T_tf_u)v\d\Lam.
	\end{equation*}
	for $v\in\D(\EE)\cap L^\infty(E,\Lam)$, $t>0$,
	and so, $L T_tu=- T_tf_u$ in $\Lam$-a.e. The claim follows with strong continuity of the semigroup,
	\begin{equation*}
		T_tu\overset{t\to 0}{\longrightarrow} u\quad\text{and}\quad  T_tf_u\overset{t\to 0}{\longrightarrow}f_u\quad \text{in }L^1(E,\Lam),
	\end{equation*}
	and the fact that $( L,\D_{\textnormal{op,}L^1}(L))$ is a closed operator.
\end{proof}

We recall the relevant basic notions from \cite{MR92}, \cite{FOT11}.
We are only interested in the symmetric case. 
The set of (bounded/non-negative) measurable functions $(E,\B(E))\to \RR$ are denoted by $\B(E)$ (respectively $\B_b(E)$/$\B_+(E)$).
Let 
\begin{equation*}
\mathbf M=(\Omega, \scr F,(\mu_t)_{t\geq 0},(\P^\phi)_{\phi\in E_\Delta})
\end{equation*} be a right process with state space $E$, life time $\zeta$ and shift operator 
$\theta_t:\Omega\to\Omega$, $t\ge 0$, as defined in \cite[Def's.~1.5 \& 1.8]{MR92}.
The expectation  w.r.t.~$\P^\phi$ is denoted by $\E^\phi$.
Then,
\begin{equation*}
	P_tf(\phi):=\E^\phi[f(\mu_t)],\quad \phi\in E,\,f\in\B_+(E),\,t\geq 0,
\end{equation*}
defines a semigroup $(P_t)_{t\ge 0}$ of sub-Markovian kernel operators.
If ${(P_t)}_{t\geq 0}$ is $\Lam$-symmetric (in particular it respects $\Lam$-classes), then its action on $\B_b(E)$
uniquely determines a strongly continuous contraction semigroup ${(T_t)}_{t\geq 0}$ in $L^2(E,\Lam)$.
The process $\mathbf M$ is called associated with the Dirichlet form $(\EE,\D(\EE))$ which corresponds to ${(T_t)}_{t\geq 0}$.
By virtue of \cite[Thm.~IV.6.7]{MR92}, given a Dirichlet form $(\EE,\D(\EE))$ in $L^2(E,\Lam)$, the existence of a right process $\mathbf M$ associated with $\EE$ is equivalent to
the quasi-regularity of the form (defined as in \cite[Thm.~IV.3.1]{MR92}). 
%In that case, $P_tf$ is quasi-continuous on $E$ for all $f\in\B_b(E)$, $t>0$. 

We call an increasing sequence ${(F_k)}_{k\in\NN}$ of closed sets an $\EE$-nest if
\begin{equation*}
	\bigcup_{k\in\NN}\Big\{u\in\D(\EE):u(\phi)=0\text{ for }\Lam\text{-a.e.~}\phi\in E\setminus F_k\Big\}
\end{equation*} 
is dense in $(\D(\EE),\EE_1^{1/2})$. A subset $N\subseteq \bigcap_{k\in\NN}(E\setminus F_k)$ is referred to as $\EE$-exceptional. 
A statement depending on a reference point $\phi\in E$ is said to hold $\EE$-quasi-everywhere ($\EE$-q.e.) if valid for all $\phi\in E\setminus N$ for some $\EE$-exceptional set $N\subset E$.
The term $\EE$-quasi-continuous applies to a function $f:E\to\RR$ which restricts to a continuous function, $f|_{F_k}\in C(F_k)$, on $F_K$ for all $k\in\NN$.
If the right process $\mathbf M$ is associated with $(\EE,\D(\EE))$, then
its transition function $P_tf$  is $\EE$-quasi-continuous for all $f\in\B_b(E)$, $t>0$.
A quasi-regular Dirichlet form uniquely determines a $\Lam$-equivalence class of associated right processes on $E$, see \cite[Sect.~IV.6]{MR92}.
Within such a class, $\mathbf M$ can w.l.o.g.~be taken to be $\Lam$-tight and special standard (\cite[Def.~IV.1.13]{MR92}).
Then, $(\EE,\D(\EE))$ has the local property if and only if
\begin{equation}\label{eq:paths}
	[0,\zeta)\ni t\mapsto \mu_t\in E\text{ is continuous }\P^\phi\text{-almost surely}
\end{equation}
for $\EE$-q.e.~$\phi\in E$. We call a $\Lam$-tight and special standard process $\mathbf M=(\Omega, \scr F,(\mu_t)_{t\geq 0},(\P^\phi)_{\phi\in E})$
a non-terminating diffusion if \eqref{eq:paths} holds with $\zeta=\infty$ for all $\phi\in E$.
For a quasi-regular, local and conservative Dirichlet form  $(\EE,\D(\EE))$ there exists a non-terminating diffusion $\mathbf M$ which is  associated.

In the following we assume that  $(\EE,\D(\EE))$ is quasi-regular, local, conservative and admits a square-field operator $(\Gamma,\D(\EE))$.
$\{\scr F_t\}_{t\geq 0}$ denotes the minimum completed admissible filtration of $\mathbf M$.
We recall the definition of an additive functional ${(A_t)}_{t\geq 0}$  of $\mathbf M$. The term refers to a numerical function $(\omega,t)\mapsto A_t(\omega)$, $\omega\in\Omega$, $t\in[0,\infty)$, such that:
\begin{itemize}
	\item $A_t$ is $\scr F_t$-measurable. 
	\item $\exists B\in\scr F$: $\theta_t(B)\subseteq B$ $\forall$ $t>0$ and $\P^\phi(B)=1$ for $\EE$-q.e.~$\phi\in E$.
	\item There exists $B\in\scr F$ (defining set) and an $\EE$-exceptional set $N\subseteq E$ such that $\theta_t(B)\subseteq B$ for $t>0$,  $\P^\phi(B)=1$ for $\phi\in E\setminus N$,
	and for  $\omega\in B$:
	\begin{itemize}
		\item $Y_0(\omega)=0$,
		\item $Y_t(\omega)\in\R$, $Y_{t+s}(\omega)=Y_t(\omega)+Y_s(\theta_t\omega)$ for $s,t\geq 0$,
		\item ${(Y_t(\omega))}_{t\geq 0}$ is c\`adl\`ag.
	\end{itemize}
\end{itemize}
The set $B$ as above is called defining set for ${(A_t)}_{t\geq 0}$. If additionally $A_t(\omega)\geq 0$ for $t\geq 0$ and $\omega\in B$, then ${(A_t)}_{t\geq 0}$ is called positive.
${(A_t)}_{t\geq 0}$ is called continuous if ${(A_t(\omega))}_{t\geq 0}$ is continuous for $\omega\in B$.
Moreover, ${(A_t)}_{t\geq 0}$ is said to be of finite (resp.~zero) energy if the limit
\begin{equation*}
	\lim_{t\to\infty}\frac{1}{2t}\int_\Omega A_t^2(\omega)\P_\Lam(\d\omega)\in[0,\infty)
\end{equation*}
exists (resp.~vanishes), where $\P_\Lam$ is the equilibrium measure $\P_\Lam(\d\omega):=\int_{E} \P^\phi(\d\omega)\Lam(\d\phi)$ on $\Omega$.
Two additive functionals $A^{(1)}={(A^{(1)}_t)}_{t\geq 0}$ and ${A^{(2)}}={(A^{(2)}_t)}_{t\geq 0}$ are called equivalent if they admit a common defining set $B$ such that
$A^{(1)}_t(\omega)=A^{(2)}_t(\omega)$ for $t\geq 0$, $\omega\in B$.  We 
write $A^{(1)}=A^{(2)}$ in that case and   identify an additive functional with its equivalence class.

A positive Radon measure $m$ on $(E,\B(E))$ is called smooth if any $\EE$-exceptional set is a $m$-nullset and there is an $\EE$-nest $\{F_k\}_k$ of compact sets such that
$m(F_k)<\infty$ for all $k$. The Revuz correspondence (see \cite[Chap.~VI, Thm.~2.4]{MR92}, \cite[Thm's.~5.1.3 \& 5.1.4]{FOT11}, \cite[Thm.~4.1.1]{CF}) describes a one-to-one assignment between the family of smooth measures $m$ and the family of positive continuous additive functionals.

\begin{rem}\label{rem:Revuz}
	If $u\in L^1(E,\Lam)$, then $u\Lambda$  is a smooth measure.
	The unique continuous additive functional determined by $u\Lambda$ is (up to equivalence) given by
	\begin{equation*}
		\Omega\times [0,\infty)\ni (\omega,t)	\mapsto \int_0^tu(\mu_s(\omega))\d s.
	\end{equation*}
	At first, this characterization can be shown if $u\in L^2(E,\Lambda)$, see \cite[Sect.~5.1]{FOT11}, and then generalizes (in particular) to $u\in L^1(E,\Lambda)$ by approximation, see \cite[Sect.'s~6 \& 7]{Uem} for details.
\end{rem}

Let $u\in\D(\EE)$ and $\tilde u$ denote a $\EE$-quasi-continuous representative (such $\tilde u$ always exists by \cite[Chap.~IV, Prop.~3.3]{MR92}).
Then, ${(\tilde u(\mu_t)- \tilde u(\mu_0))}_{t\geq 0}$ defines  an additive functional which
up to equivalence does not depend on the choice of a quasi-continuous modification of $u$  (see \cite[Sect.~5.2 (I)]{FOT11}).
 The Fukushima decomposition (\cite[Thm.~2.5]{MR92}, \cite[Thm.~5.2.2]{FOT11}) states that
\begin{equation}\label{eq:FukuD}
	\tilde u(\mu_t)-\tilde u(\mu_0)=M_t^{[u]}+N_t^{[u]},\qquad t\geq 0,
\end{equation}
where:
\begin{itemize}
	\item ${(N_t^{[u]})}_{t\geq 0}$ is a continuous additive functional of zero energy and
	 $N_t^{[u]}\in L^1(\Omega,\P^\phi)$ for all $t\geq 0$ holds for $\EE$-q.e.~$\phi\in E$.
	\item ${(M_t^{[u]})}_{t\geq 0}$ is an additive functional of finite energy, Moreover, $\int_\Omega M_t^{[u]}(\omega) \P^\phi(\d\omega)=0$ and $M_t^{[u]}\in L^2(\Omega,\P^\phi)$ 
	for all $t\geq 0$ hold for $\EE$-q.e.~$\phi\in E$.
\end{itemize}
Moreover, ${(M_t^{[u]})}_{t\geq 0}$, ${(N_t^{[u]})}_{t\geq 0}$ in \eqref{eq:FukuD} with the properties above are unique up to equivalence.

\begin{rem}\label{rem:MS}
	(i) If $u\in \D_{\textnormal{op},L^1}(L)\cap\D(\EE)$, where $L$ is the generator of $\D(\EE)$ and $\D_{\textnormal{op},L^1}(L)$ is the domain of a closed operator extension of $L$ in $L^1(E,\Lam)$, then 
\begin{equation}\label{eq:zeroE}
	N_t^{[u]}=\int_0^tLu(\mu_s)\d s,\quad t\geq 0,
\end{equation}
by virtue of \cite[Thm.~5.4.2]{FOT11}. 

	(ii) Let $u\in\D(\EE)$. 
	From the Markov property of ${(\mu_t)}_{t\geq 0}$ we conclude
	\begin{equation*}
		\E^\phi\big[M^{[u]}_{t+s}|\F_t\big]=\E^\phi\big[M^{[u]}_{t}+M^{[u]}_{s}\circ\theta_t|\F_t\big]=M^{[u]}_{t}+\E^{\mu_t}\big[M^{[u]}_{s}\big]=M^{[u]}_{t},\qquad t,s\geq 0,
	\end{equation*}
	$\P^\phi$-a.e.~for $\EE$-q.e.~$\phi\in E$. Hence, 
	\begin{equation*}
	\big({(M_t^{[u]})}_{t\geq 0},{(\scr F_t)}_{t\geq 0}, \P^\phi\big)
	\end{equation*} is a square-integrable martingale for $\EE$-q.e.~$\phi\in E$. The quadratic variation ${(\la M^{[u]}\ra_t)}_{t\geq 0}$ is in Revuz correspondence with 
	$$2\Gamma(u,u)(\phi)\Lam(\d\phi),$$ by virtue of \cite[Thm.~5.2.3]{FOT11}.
	Hence, by Remark \ref{rem:Revuz} and uniqueness of the Revuz measure,
	\begin{equation*}
		\la M^{[u]}\ra_t=2\int_0^t\Gamma(u,u)(\mu_s)\d s,\qquad t\geq 0.
	\end{equation*}
	Moreover, 
	\begin{equation*}
		\la M^{[u]},M^{[v]}\ra_t=2\int_0^t\Gamma(u,v)(\mu_s)\d s,\qquad t\geq 0,
	\end{equation*}
	for $u,v\in\D(\EE)$.
\end{rem}

In many cases, gradient-type Dirichlet forms on vector spaces are defined as an infinite sum of directional forms emerging from partial derivatives.
This is done, for example in \cite{AR90} and \cite[Chap.~2]{MR92}, from where we recall the notion of
$\LL$-admissibility.
Let $E$ and $\Lam$ be as above and in addition let $E$ be a locally convex, Hausdorff topological vector space.
The space of continuous linear functionals $E\to\RR$ is denoted by $E^*$.
%so tha the Borel $\sigma$-field $\B(E)$ is generated by all elements of $E^*$, the space of continuous linear functionals $E\to\RR$.  
A probability measure $m$ on $(\RR,\B(\RR))$ is said to satisfy the Hamza Condition if there exists a density $\rho_m:\RR\to[0,\infty)$
and an open set $U\subseteq \RR$ such that
\begin{equation}\label{eq:hamza}
	m(\d s)=\rho_m(s)\d s,\quad\rho_m^{-1}\in L_\loc(U)\quad\text{and}\quad m( U)=1.
\end{equation}
In this case, by the continuity of the embedding $L^2(\RR,m)\hookrightarrow L^1_\loc(U,m)$, we can define a weighted $(1,2)$-Sobolev space
\begin{equation*}
	H^{1,2}(m):=\big\{f\in H^{1,1}_\loc(U)\cap L^2(\RR,m):f'\in L^2(\RR,m)\big\}
\end{equation*}
with corresponding energy form and Sobolev-norm
\begin{equation*}
	{(f,g)}_{H^{1,2}(m)}:={\la f',g'\ra}_{L^2(\RR,m)},\qquad {\|f\|}_{H^{1,2}(m)}:=\big({\|f\|}^2_{L^2(\RR,m)}+ {(f,f)}_{H^{1,2}(m)}\big)^{\frac 1 2 },
\end{equation*}
for $f,g\in H^{1,2}(m)$.
For fixed $\phi\in E\setminus\{0\}$ and $\xi\in E^*$ with $\xi(\phi)=1$ we set
\begin{equation*}
	\pi_\phi:E\ni \eta\mapsto \eta -\xi(\eta)\phi,\quad     E_\phi:=\pi(E) \quad\text{and}\quad \nu_\phi:=\Lam\circ\pi_\phi^{-1}.
\end{equation*}
As shown in \cite[Chap.~10]{Dud02}, there exists a Markov kernel $K: E_\phi\times\B(\RR)\to[0,1]$ such that
\begin{equation*}
	\int_E f\d\Lam=\int_{E_\phi}\int_\RR f(\eta+s \phi)K(\eta,\d s)\nu_\phi(\d \eta),\ \ f\in\B_b(E).
\end{equation*}
We identify en element 
\begin{equation*}
{(u_\eta)}_{\eta\in E_\phi}\in L^2\big(\RR\times E_\phi, K(\eta,\d s)\nu_\phi(\d\eta)\big),
\qquad \RR\times E_\phi \ni(s,\eta)\mapsto u_\eta(s),
\end{equation*} 
with the unique element $u\in L^2(E,\Lam)$  such that
\begin{equation*}
\int_E uf\d\Lam=\int_{E_\phi}\int_\RR u_\eta(s)f(\eta+s \phi)K(\eta,\d s)\nu_\phi(\d \eta),\ \ f\in L^2(E,\LL).
\end{equation*}

\beg{defn}\label{def:admis} An element $\phi\in E\setminus\{0\}$ is called $\Lam$-admissible if there exist $\xi\in E^*$ and $K: E_\phi\times\B(\RR)\to[0,1]$ as above such that $K(\eta,\cdot)$ satisfies \eqref{eq:hamza} for $\nu_\phi$-a.e.~$\eta\in   E_\phi$.\end{defn}
For a $\LL$-admissible element $\phi\in E\setminus\{0\}$ we set
\begin{align*}
\D(\EE_\phi):=\Big\{{(u_\eta)}_{\eta\in  E_\phi}=u\in L^2(E,\LL):\ &u_\eta\in H^{1,2}(K(\eta,\cdot))
\ \text{for}\ \nu_\phi\text{-a.e.}\ \eta,\\ &\int_{E_\phi}{(u_\eta,u_\eta)}_{H^{1,2}(K(\eta,\cdot))}\nu_\phi(\d\eta)<\infty\Big\},
\end{align*}
\begin{equation*}
	\EE_\phi(u,v) :=\int_{E_\phi} {(u_\eta,v_\eta)}_{H^{1,2}(K(\eta,\cdot))}\nu_\phi(\d \eta).
\end{equation*}
This yields a well-defined Dirichlet form $(\EE_\phi,\D(\EE_\phi))$ in $L^2(E,\Lam)$, see \cite[Sect.~3]{AR90}.

\begin{exa}\label{exa:GaussDD}
Let $\Lam$ be a probability measure on $(E,\B(E))$ such that for each $\xi\in E^*$ the image measure $\Lam\circ\xi^{-1}$ on $\RR$ is Gaussian (including the case of a Dirac measure).
Then, $\Lam$ is called Gaussian measure on $E$ and there is a criterion for $\Lam$-admissibility via quasi-shift-invariance.

Let $\overline {E^{*}}^\Lam$ denote the closure of the set $\{\xi-\Lam(\xi):\xi\in E^*\}$ in $L^2(E,\Lam)$, where $\Lam(\xi):=\int_E\xi\d\Lam$. If $\phi\in E$ such that
\begin{equation}\label{eq:CM}
	\exists\, g_\phi\in \overline {E^{*}}^\Lam:\qquad \forall\, \xi\in E^*:\qquad \xi(\phi)=\int_E g_\phi(\tilde \phi) \big(\xi(\tilde \phi)-\Lam(\xi)\big)\Lam(\d\tilde \phi),
\end{equation}
then $\Lam$ and the image measure under the shift, $\Lam(\,\cdot\,-\phi)$ are absolutely continuous w.r.t.~each other.
The linear space of all $\phi\in E$ with the property  of \eqref{eq:CM} is called the Cameron--Martin space $\mathcal H_\LL$ of $\Lam$
and the corresponding Radon--Nikodym density is given by
\begin{equation*}
	\frac{\d\Lam(\,\cdot\,-\phi)}{\d\Lam}=\exp\big(g_\phi-\tfrac{1}{2}\|g_\phi\|_{L^2(E,\Lam)}\big),
\end{equation*}
see \cite[Chap.~2]{B98}.
The $\Lam$-admissibility of every non-zero element of the Cameron--Martin space $\mathcal H_\LL$ is shown in \cite[Prop.~4.2]{AR90}.
\end{exa}

To conclude the preliminaries, we give the definition  of an image form under a measurable transform of the state space.
Let $E$, $\Lam$ be as in the beginning of this section, $(\EE,\D(\EE))$ be a closed symmetric form in $L^2(E,\Lam)$ and $\Psi:E\to S$ be a measurable map into some measurable space $(S,\sigma)$.
The closed symmetric form $(\EE^{\text{im}},\D(\EE^{\text{im}}))$ in $L^2(S,\Lam\circ \Psi^{-1})$ which is defined
\begin{equation*}
\EE^{\text{im}}(u,v):=\EE(u\circ \Psi,v\circ \Psi )
\end{equation*}
and has the domain
\begin{equation*}
\D(\EE^{\text{im}}):=\big\{u\in L^2(S,\Lam\circ \Psi^{-1}):u\circ\Psi\in \D(\EE)\big\}
\end{equation*}
is called the image form  of $(\EE,\D(\EE))$ under $\Psi$.

\subsection{Induced diffusion processes on $\scr P_p$}\label{ssec:indT}
We fix $d\in\NN$ and denote the set of all probability measures on $\RR^d$ by $\scr P$.
For $p\in[1,\infty)$ the $p$-Wasserstein space \begin{equation*}
	\scr P_p:=\big\{\mu\in \scr P:\ \mu(|\cdot|^p) <\infty\big\}
\end{equation*}
of all probability measures with finite $p$-th moment
is a Polish space w.r.t.~the topology induced by the $p$-Wasserstein distance 
\begin{equation*}
	\WW_p(\mu,\nu):=\inf_{\pi\in\text{Coupl}(\mu,\nu)}\Big(\int_{\RR^d\times\RR^d}|x-y|^p\pi(\d x,\d y)\Big)^{\frac 1 p },\qquad \mu,\nu\in\scr P_p(\RR^d),
\end{equation*}
where the infimum runs over all couplings of $\mu$ and $\nu$.
In the following, we specify the basic existence result for a Markov process ${(R_t)}_{t\geq 0}$ on $\scr P_p$ which has a Gaussian-based invariant measure.
%We regard $L^p(\RR^d\to\RR^d,\lam)$ as tangent space of $\scr P_p$ at a point $\lam\in\scr P_p$. 
In \cite{BRW21} the intrinsic derivative on $\scr P_p$ and the class $C_b^1(\scr P_p)$ is introduced, in the spirit of \cite{AKR96, AKR98, R98}, as follows.
We use the convention that $\ff 1 0=\infty$.
\beg{defn}\label{D1} Let $p\in [1,\infty)$ and $\id(x)=x$ for $x\in\R^d$.
\beg{enumerate}\item[(1)] A continuous function
$f: \scr P_p \to\R$ is  called intrinsically differentiable if 
\begin{equation*}
L^p(\R^d\to\R^d,\mu)\ni \phi\mapsto D_\phi f(\mu):=\lim_{\vv\to 0} \ff{ f\big(\mu\circ(\id+\varepsilon\phi)^{-1}\big)
	- f(\mu)}{\vv}\in \R
\end{equation*}
is a well-defined, bounded linear functional for any $\mu\in \scr P_p$.
In this case, the intrinsic derivative $Df(\mu)\in L^{\ff p{p-1}}(\R^d\to\R^d,\mu)$
of $f$ at $\mu$ is the unique element  such that
\begin{equation*}
D_\phi f(\mu)=\int_{\RR^d}\<\phi, Df(\mu)\>\d\mu,\qquad \phi\in L^p(\R^d\to\R^d,\mu).
\end{equation*}
\item[(2)] An intrinsically differentiable function $f$ is called $L$-differentiable if 
\begin{equation*}
\lim_{\|\phi\|_{L^p(\RR^d\to\RR^d,\mu)}\downarrow 0} \ff{|f\big(\mu\circ(\id+\phi)^{-1}\big)-f(\mu)-D_\phi f(\mu)|}{\|\phi\|_{L^p(\RR^d\to\RR^d,\mu)}}=0
\end{equation*}
for each $\mu\in \scr P_p$.
The class $C_b^1(\scr P_p )$ contains all $L$-differentiable functions $f$ for which a bounded and continuous version 
$$\R^d\times\scr P_p \ni(x,\mu)\mapsto Df(x,\mu)\in \R^d$$
of the derivative exists in the sense that  $Df(\cdot,\mu)=Df(\mu)$ holds $\mu$-a.e.~for every $\mu\in\scr P_p$.
\end{enumerate}\end{defn}

\noindent Below, we define a Dirichlet form $\EE$ with state space $\scr P_p$, $p\in[1,2]$, and square-field operator of the type
\begin{align}\label{eq:Gammadef}
\Gamma(u,v)(\mu)&:={\la Du(\mu), Dv(\mu)\ra}_{L^2(\RR^d\to\RR^d,\gamma_\mu\mu)}\\
&=\int_{\RR^d}\la Du(\mu), Dv(\mu)\ra\gamma_\mu\d\mu,\qquad\mu\in\scr P_p,\,u,v\in C_b^1(\scr P_p),\nonumber
\end{align}
where $\gamma_\mu(x):=\gamma(x,\mu)$ and $\gamma:\RR^d\times\scr P_p\to(0,\infty)$ is measurable with $c^{-1}\leq\gamma(\cdot,\cdot)\leq c$ for some constant $c\in(0,\infty)$.
This  is analogous to the classical situation in which the state space $\BB$ is a Banach space, $\HH\subseteq \BB$ a densely embedded Hilbert space and 
thus $\BB^*\subseteq \HH^*=\HH$.
Then, $\Gamma(f,g)(x)=\la\nabla f(x),\nabla g(x)\ra_{\HH}$, $x\in\BB$, for suitable differentiable functions $f,g:\BB\to\RR$, is the square-field operator of standard gradient-type Dirichlet forms on $\BB$
(e.g.~see \cite{AR90}, \cite[Sect.~II.3]{MR92}).
Let 
\begin{equation*}\ppac:=\Big\{\mu\in\scr P_p:\text{ a probability density }\rho_\mu(x)=\frac{\d\mu}{\d x}\text{ exists}\Big\}.\end{equation*}
First, we formulate a condition on a reference probability measure on $L^p(\RR^d\to\RR^d,\lam)$ for some fixed $\lam\in\ppac$.
Subsequently, with the map in \eqref{eq:PsiLam} below, we consider its push-forward into $(\scr P_p,\B(\scr P_p))$. 
In this manner,  we can obtain a gradient-type Dirichlet form and diffusion process on $\scr P_p$.

\begin{enumerate}
	\item[$(C_1)$] Let $\lam\in\ppac $ for some $p\in[1,2]$ and $\D\subseteq L^p(\R^d\to\R^d,\ll) $ be a linear subspace,
	equipped with a locally convex Hausdorff topology which makes it a Lusin space, such that
	$\D$ is densely and continuously embedded into $L^p(\R^d\to\R^d,\ll)$.
	Moreover, let   $G_\D$ be a probability measure on $(\D,\B(\D))$   such that
	$L^2(\RR^d\to\R^d,\lam)$ has an  orthonormal basis ${\{\phi_k\}}_{k\in\mathbb N}$ consisting of
	$G_\D$-admissible elements (see Definition \ref{def:admis}).
\end{enumerate}

\begin{rem}\label{rem:loc}
	If  $ G_\D $ satisfies   condition $(C_1)$,
	then so does the measure $\frac{\eins_U(\phi)G_\D(\d\phi)}{G_\D(U)} $  for any   open set $U\subseteq \D$ with $G_\D(U)>0$.
	This is an immediate consequence of the fact that $\frac{\eins_D(s)m(\d s)}{m(D)}$ satisfies the Hamza Condition \eqref{eq:hamza}  if
	a probability $m$ on $(\RR,\B(\RR))$ does and  $  D\subseteq\RR$ is open with $m(D)>0$.
\end{rem}

Assuming ($C_1$), the push-forward of $G_\D$ under the map
\begin{equation}\label{eq:PsiLam}
	\Psi_\lam:\D\ni\phi\mapsto\lam\circ\phi^{-1}\in\scr P_p
\end{equation} 
yields a probability measure 
\begin{equation}\label{eq:Lam}\LL:=G_\D\circ\Psi_\lam^{-1}\end{equation} on
$(\scr P_p,\scr B(\scr P_p))$, suitable for the purpose of defining a gradient-type Dirichlet form.

Dirichlet forms and diffusion processes on $\scr P_p$ related to the intrinsic derivative 
are studied in \cite{RW22,RWW24} for such type of reference measures $\Lam$.
The next proposition sums up the relevant result in our context. 

\begin{prp}\label{prp:basicP} Assume $(C_1)$. Let $\Lam$ be as in \eqref{eq:Lam}, $\gamma:\RR^d\times\scr P\to (0,\infty)$ be measurable such that $c^{-1}\leq\gamma(\cdot,\cdot)\leq c$
	for some constant $c\in(0,\infty)$ and $\Gamma$ be as in \eqref{eq:Gammadef}.

	(i) The bilinear form $(\EE^{\gamma,\Lam},C_b^1(\scr P_p))$, defined 
	\begin{equation*}\EE^{\gamma,\Lam}(u,v):=\int_{\scr P_p}\Gamma(u,v)(\mu)\LL(\d\mu),\qquad u,v\in C_b^1(\scr P_p),
	\end{equation*} 
	has a closed extension in $L^2(\scr P_p,\LL)$. Its minimal closed extension yields a quasi-regular, strongly local Dirichlet form $(\EE^{\gamma,\Lam},\D(\EE^{\gamma,\Lam}))$.
	In particular, there exists a non-terminating diffusion $(\Omega, \scr F,$ $(\mu_t)_{t\geq 0}, (\P^\mu)_{\mu\in \scr P_p})$ on $\scr P_p$ which is associated with $\EE^{\gamma,\Lam}$.
	
	(ii) There exist $\phi_{\mu,k}\in L^2(\RR^d\to\RR^d,\mu)$, $k=\N$, $\mu\in\scr P_p$, such that $D_{\phi_{\mu,k}}u:\scr P_p\to\RR$ is measurable for $u\in C_b^1(\scr P_p)$ and 
	\begin{equation*}
		\Gamma(u,v)(\mu)=\sum_{k=1}^\infty D_{\gamma_\mu\phi_{\mu,k}} u(\mu)D_{\phi_{\mu,k}}v(\mu)
	\end{equation*}
	for $\Lam$-a.e.~$\mu\in\scr P_p,$ $u,v\in C_b^1(\scr P_p)$.
\end{prp}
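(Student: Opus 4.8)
The plan is to reduce everything to the classical theory of gradient-type Dirichlet forms on a linear Lusin state space, via the push-forward map $\Psi_\lam$, and then transport the resulting square-field structure along $\Psi_\lam$ using the image-form construction recalled at the end of Section~\ref{ssec:general}. First I would invoke condition $(C_1)$ to set up, on the linear Lusin space $\D\subseteq L^p(\R^d\to\R^d,\lam)$ equipped with $G_\D$, the classical pre-Dirichlet form $\tilde\EE_0(f,g):=\int_\D\lam(\la\na f(\phi),\na g(\phi)\ra)\,G_\D(\d\phi)$ on the algebra of smooth cylinder functions based on the $G_\D$-admissible orthonormal basis $\{\phi_k\}_{k\in\NN}$ of $L^2(\R^d\to\R^d,\lam)$; by \cite[Sect.~II.3]{MR92} together with the admissibility (so the directional forms $\EE_{\phi_k}$ are well-defined Dirichlet forms), this is closable and its closure $(\tilde\EE,\D(\tilde\EE))$ is a quasi-regular, strongly local Dirichlet form admitting the square-field operator $\tilde\Gamma(f,g)(\phi)=\sum_k \la \na f(\phi),\phi_k\ra_{T_{\lam,2}}\la\na g(\phi),\phi_k\ra_{T_{\lam,2}}=\lam(\la\na f(\phi),\na g(\phi)\ra)$. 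The weight $\gamma$ is incorporated by replacing $\tilde\Gamma$ with $\tilde\Gamma^\gamma(f,g)(\phi):=\lam(\gamma_{\Psi_\lam(\phi)}\la\na f(\phi),\na g(\phi)\ra)$; since $c^{-1}\le\gamma\le c$, the associated form is comparable to $\tilde\EE$, hence closable with the same domain, and strong locality and quasi-regularity are preserved.

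Next I would take the image form under $\Psi_\lam$. The key point is that $\Psi_\lam$ pulls back $C_b^1(\scr P_p)$ into the domain of $\tilde\EE^\gamma$ and intertwines the intrinsic derivative with the $L^2(\lam)$-gradient: for $u\in C_b^1(\scr P_p)$ and $\phi\in\D$ (invertible when needed, which is $G_\D$-a.e.\ in the relevant later sections but here is handled by the chain rule valid for the curves $\mu_{\varphi,\ee}$), one has $\na(u\circ\Psi_\lam)(\phi)$ represented in the basis by $\la\na(u\circ\Psi_\lam)(\phi),\phi_k\ra_{T_{\lam,2}}=D_{\phi_{\mu,k}}u(\mu)$ with $\mu=\Psi_\lam(\phi)=\lam\circ\phi^{-1}$ and a suitable $\phi_{\mu,k}\in L^2(\R^d\to\R^d,\mu)$ obtained by transporting $\phi_k$ through $\phi$ (concretely $\phi_{\mu,k}=\phi_k\circ\phi^{-1}$, or the corresponding push-forward of the tangent vector). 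Consequently $\tilde\Gamma^\gamma(u\circ\Psi_\lam,v\circ\Psi_\lam)(\phi)=\mu(\gamma_\mu\la Du(\mu),Dv(\mu)\ra)$, and integrating against $G_\D$ and using $\Lam=G_\D\circ\Psi_\lam^{-1}$ gives precisely $\EE^{\gamma,\Lam}(u,v)=\int_{\scr P_p}\mu(\gamma_\mu\la Du(\mu),Dv(\mu)\ra)\Lam(\d\mu)$. Closability of $\EE^{\gamma,\Lam}$ on $C_b^1(\scr P_p)$ then follows from closability of $\tilde\EE^\gamma$: if $u_n\to 0$ in $L^2(\Lam)$ and $(u_n)$ is $\EE^{\gamma,\Lam}$-Cauchy, then $u_n\circ\Psi_\lam\to 0$ in $L^2(G_\D)$ and is $\tilde\EE^\gamma$-Cauchy, hence $\tilde\EE^\gamma(u_n\circ\Psi_\lam,\cdot)\to0$, i.e.\ $\EE^{\gamma,\Lam}(u_n,\cdot)\to0$. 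Strong locality and the square-field identity $\Gamma(u,v)(\mu)=\sum_k D_{\gamma_\mu\phi_{\mu,k}}u(\mu)D_{\phi_{\mu,k}}v(\mu)$ descend from the image-form machinery (image forms preserve strong locality, and the square-field operator of an image form is the push-forward of the original one via the remark after \eqref{eq:squareF} applied to the dense subalgebra generated by $C_b^1$-cylinder functions).

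The remaining item is quasi-regularity of $(\EE^{\gamma,\Lam},\D(\EE^{\gamma,\Lam}))$, which does not transport automatically under $\Psi_\lam$ since $\Psi_\lam$ need not be a homeomorphism onto its image. Here I would follow \cite{RW22,RWW24}: one builds an $\EE^{\gamma,\Lam}$-nest of compact sets in $\scr P_p$ directly, exploiting that $\scr P_p$ is Polish, that $C_b^1(\scr P_p)$ separates points of $\scr P_p$ (so condition (iii) of \cite[Def.~IV.3.1]{MR92} holds), and that the image under $\Psi_\lam$ of a $\tilde\EE^\gamma$-nest of compacts in $\D$ is, up to taking closures, a nest of compacts in $\scr P_p$ because $\Psi_\lam$ is continuous; tightness of $\Lam$ on the Polish space $\scr P_p$ then upgrades this to the required exhaustion. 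Once quasi-regularity and strong locality are in hand, \cite[Chap.'s IV \& V]{MR92} yield the properly associated non-terminating diffusion $(\Omega,\scr F,(\mu_t)_{t\ge0},(\P^\mu)_{\mu\in\scr P_p})$, and conservativeness ($\eins\in\D(\EE^{\gamma,\Lam})$ with $\EE^{\gamma,\Lam}(\eins,\eins)=0$) is immediate since $\eins\circ\Psi_\lam=\eins$ and $\na\eins=0$. I expect the main obstacle to be precisely this quasi-regularity step: verifying the nest-of-compacts condition for the image form, since $\Psi_\lam$ collapses information and one must check that $\Lam$-negligible sets and $\tilde\EE^\gamma$-exceptional sets interact correctly under the push-forward — this is where the careful arguments of \cite{RW22,RWW24} are needed rather than a soft transport principle. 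The measurability of $\mu\mapsto D_{\phi_{\mu,k}}u(\mu)$ asserted in (ii) is a comparatively minor point, following from joint measurability of $(x,\mu)\mapsto Du(x,\mu)$ for $u\in C_b^1(\scr P_p)$ (Definition~\ref{D1}) and a measurable selection of the transport maps $\phi_k\circ\phi^{-1}$.
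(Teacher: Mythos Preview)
Your proposal is essentially correct and follows the same overall strategy as the paper: build the classical gradient form $\tilde\EE$ on the linear Lusin space using the $G_\D$-admissible basis $\{\phi_k\}$, pull back $C_b^1(\scr P_p)$ via $\Psi_\lam$ using the chain rule $\na(u\circ\Psi_\lam)(\phi)=(Du\circ\Psi_\lam)\circ\phi$ from \cite{BRW21}, read off closability, strong locality and the square field from the image-form structure, and defer quasi-regularity to \cite{RW22,RWW24}.

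Two small points where the paper proceeds slightly differently. First, the paper disposes of $\gamma$ at the outset by noting that closability, locality and quasi-regularity are invariant under the equivalence $c^{-1}\tilde\EE\le\tilde\EE^\gamma\le c\,\tilde\EE$, so it works with $\gamma\equiv 1$ throughout part~(i); your comparability argument is equivalent. Second, and more substantively, your sketch of the quasi-regularity step via transporting a $\tilde\EE^\gamma$-nest through $\Psi_\lam$ is not how the paper (or \cite{RWW24}) actually argues: the inclusion $\Psi_\lam^{-1}(\Psi_\lam(F_k))\supseteq F_k$ goes the wrong way for the nest property to transfer directly. The paper instead invokes the abstract criterion of \cite[Thm.~2.1]{RWW24}, verifying it via the bound $\EE(u,u)\le\sup_{\mu}\|Du(\mu)\|_{L^{p/(p-1)}(\mu)}^2$ for $u\in C_b^1(\scr P_p)$, together with the facts that $\D(\EE)$ contains the differentiable cylinder functions and has a dense subset of continuous functions. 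You correctly flag this step as the delicate one and point to the right references, so this is not a gap, just a place where your heuristic description should be replaced by the actual mechanism.
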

\begin{proof}
	In the proof, we write $\EE:=\EE^{\gamma,\Lam}$ for short.
	
	(i) We may w.l.o.g.~assume $\gamma(\cdot,\cdot)=1$ regarding the claim of (i), which in view of the bounds for $\gamma$  does not affect the claimed properties.
	Let $X:=L^p(\RR^d\to\R^d,\lam)$ and $\id_{\D\to X}$ be the identification of an element in $\D$ with its $\lam$-class.
	For $k\in\NN$ and $\phi_k$ as in Condition $(C1)$ there exists a Dirichlet form $(\EE_{\phi_k},\D(\EE_{\phi_k}))$ on $\D$ corresponding to the directional derivative w.r.t.~$\phi_k$, as introduced at the end of Section \ref{ssec:general}.
	For simplicity, the image structure under $\id_{\D\to X}$, i.e.~image form and image measure on $X$, are again denoted by $\EE_{\phi_k}$ and $G_\D$.
	Let $C_b^1(X)$ denote the space of functions $f:X\to\RR$ whose Fr\'echet derivative $\na f$ yields a continuous and bounded map $X\to L^\infty(\RR^d\to\R^d,\lam)$.
	Then,
	$C_b^1(X)$ is contained in
	\begin{equation*}
		\D(\tilde \EE):=\Big\{f\in\bigcap_{k\in\NN}\D(\EE_{\phi_k}):\sum_{k\in\NN}\EE_{\phi_k}(f,f)<\infty\Big\}
	\end{equation*}
	since $\{\phi_k\}_{k\in\N}$ is an orthonormal basis of $L^2(\RR^d\to\RR^d,\lam)$.
	Moreover,
	\begin{equation*}
		\D(\tilde \EE)\times\D(\tilde\EE)\ni (f,g)\mapsto \sum_{k\in\NN}\EE_{\phi_k}(f,g)
	\end{equation*}
	is a closed symmetric form in $L^2(X,G_\D)$.

	Now, the proof can be completed by using the arguments   presented in the proofs of
	\cite[Thm.~3.2]{RW22} and \cite[Thm's.~2.1 \& 3.1]{RWW24}. First,
	$\Psi_\lam$ can be understood as a continuous map on $X$, because the assignment in \eqref{eq:PsiLam} respects $\lam$-classes.
	From \cite[Thm.~2.1]{BRW21}, it follows $u\circ\Psi_\lam\in C_b^1(X)$ for $u\in C_b^1(\scr P_p )$ with
	\begin{equation}\label{eq:ChainRIntrD}
		\na (u\circ \Psi_\lam)(\phi)=(Du\circ\Psi_\lam)\circ\phi,\qquad\phi\in X
	\end{equation}
	(see also \cite[Lem.~3.2]{RWW24}).
	This, in turn, implies that the image form of $(\tilde \EE,\D(\tilde \EE))$ under $\Psi_\lam$ is an extension of $(\EE, C_b^1(\scr P_p))$.
	Since such an extension, say $\EE$ with domain $\D(\EE)_{\text{large}}\subset L^2(\scr P_p ,\Lam)$, can be restricted to the topological closure $\overline{C_b^1(\scr P_p )}^{\EE_1}$ of $C_b^1(\scr P_p )$ in $\D(\EE)_{\text{large}}$ w.r.t.~$\EE_1^{1/2}$-norm, there exists a minimal closed symmetric form $(\EE,\D(\EE))=(\EE,\overline{C_b^1(\scr P_p )}^{\EE_1})$ in $L^2(\scr P_p ,\Lam)$ extending $(\EE, C_b^1(\scr P_p))$.
	The Markovian property of $(\EE,\D(\EE))$ is inherited via the relation in \eqref{eq:ChainRIntrD}
	and the (easy-to-prove) fact that $ \{\kappa\circ u:$ $u\in C_b^1(\scr P_p )\}$ $\subseteq C_b^1(\scr P_p )$ for $\kappa\in C_b^1(\RR)$.
	The strong local property and the existence of a square-field operator (i.e.~the fact that $\EE$ satisfies \eqref{eq:squareF}
	with $\Gamma(u,v)(\mu)=\la Du(\mu),Dv(\mu)\ra_{L^2(\RR^d\to\RR^d,\mu)}$
	is a consequence of the strong local property, respectively the product rule, of the
	gradient operator on $C_b^1(X)$ and again \eqref{eq:ChainRIntrD}.
	Finally, $(\EE,\D(\EE))$ is a quasi-regular Dirichlet form on $\scr P_p $ by the criterion in \cite[Thms.~2.1]{RWW24}, since $\D(\EE)$ contains all differentiable cylinder functions on $\scr P_p $,
	has a dense subset of continuous functions and 
	\begin{equation*}
		\EE(u,u)\leq \sup_{\mu\in\scr P_p}\|Du(\mu)\|^2_{L^{\ff p{p-1}}(\R^d\to\R^d,\mu)},\qquad u\in C_b^1(\scr P_p).
	\end{equation*}
	This concludes the proof of (i).
	
	(ii) Let $u,v\in C_b^1(\scr P_p)$ and $k\in\NN$. We refer to \cite[Sect.~3.2, in part.~Thm.~3.4 \& Exa.~3.7]{RWW24} for the existence of $\phi_{\mu,k}\in L^2(\RR^d\to\RR^d,\mu)$, $\mu\in\scr P_p$,
	as claimed such that 
	\begin{multline}\label{eq:ml}
		\int_{\scr P_p}\mu\big( \gamma_\mu \la Dv(\mu),\phi_{\mu,k}\ra\big)\mu\big(\la Dv(\mu),\phi_{\mu,k}\big\ra\big) \Lam(\d\mu)\\
		=\int_{\D}\lam \big( \gamma(\phi(\cdot),\Psi_\lam(\phi))\la \nabla (u\circ\Psi_\lam)(\phi),\phi_k\ra\big)
		\lam \big( \la \nabla (u\circ\Psi_\lam)(\phi),\phi_k\ra\big) G_\D(\d\phi).
	\end{multline}
	The right-hand side of this equation coincides with the image of a directional form $\EE_{\phi_k}$ as considered in (i) under $\Psi_\lam$, evaluated at $u,v$.
	\eqref{eq:ml} states that the image of a directional form can be represented in terms of a direction field  $\mu\mapsto \phi_{\mu,k}$ on $\scr P_p$.
	Summing up over $k\in\NN$, the right-hand side of \eqref{eq:ml} equals 
	\begin{equation*}
		\int_{\D}\lam\big(\gamma(\phi(\cdot),\Psi_\lam(\phi))\la \nabla (u\circ\Psi_\lam)(\phi),\nabla (v\circ\Psi_\lam)(\phi)\ra\big) G_\D(\d\phi)=\EE(u,v)
	\end{equation*}
	because of \eqref{eq:ChainRIntrD} and the fact that $\{\phi_k\}_{k\in\NN}$ is an orthonormal basis of $L^2(\RR^d\to\RR^d,\lam)$.
	The claim now follows easily by \eqref{eq:ml}.
\end{proof}

\subsection{Gaussian-based measures}\label{ssec:Gauss}

We introduce the class of Gaussian-based measures, which play an important role in the subsequent chapters (see Example \ref{exa:F2} and Remarks \ref{rem:Gauss2} \& \ref{rem:last} in particular).
From here on, the set $\scr D$ in Condition ($C_1$)  is always chosen as in \eqref{eq:DD1} below.
The starting point is  a Gaussian measure on $\D$, defined as in Example \ref{exa:GaussDD}.

\begin{rem}
	Let $\ll\in\ppac$ for some $p\in[1,2]$. The space
	\begin{equation}\label{eq:DD1}
		\D:=\Big\{\phi\in C^1(\RR^d,\RR^d):\ \sup_{\substack {x,y\in\RR^d\\x\ne y}}\ff{|\phi(x)-\phi(y)|}{|x-y|}=:\|\na\phi\|_\infty<\infty\Big\}
		\end{equation}
		with the metric
		\begin{equation*}
		 d_\D(\phi_1,\phi_2):=|\phi_1-\phi_2|(0) + \big\|\na(\phi_1-\phi_2)\big\|_\infty,\qquad \phi_1,\phi_2\in\D,\nonumber
	\end{equation*}
	is complete and separable.
	Moreover, $(C_1)$ is satisfied given any Gaussian measure $G_\D$ on $(\D,\B(\D))$
	such that the inclusion 
	\begin{equation*}
	\mathcal H_{G_\D}\cap L^2(\RR^d\to\R^d,\lam)\subseteq L^2(\RR^d\to\R^d,\lam)
	\end{equation*}
	is dense ($\mathcal H_{G_\D}$ denotes the Cameron-Martin space).
\end{rem}

We denote the Jacobian matrix of a function $\phi\in \D$ by $\na\phi$. Since 
\begin{equation*}
	\sup_{z\in\RR^d}\,\sup_{h\in\RR^d\setminus\{0\}}\,\frac{|\na\phi(z)h|}{|h|}=\sup_{\substack {x,y\in\RR^d\\x\ne y}}\,\ff{|\phi(x)-\phi(y)|}{|x-y|},
\end{equation*}
we may also read $\big\|\na\phi \big\|_\infty$ as the supremum norm of the operator norm of $\na\phi$.

\begin{exa}\label{exa:GaussD}
	Let  $\lam\in\ppac$, $p\in[1,2]$.
A Gaussian measure $G_\D$ as in the above remark can be constructed in a natural, starting with any sequence ${\{\phi_k\}}_{k\in\mathbb N}\subset C_b^1(\RR^d,\RR^d)$ which is an
orthonormal basis of $L^2(\RR^d\to\RR^d,\lam)$, as demonstrated in the following.
	 We set
	\begin{equation}\label{eq:ak}
		a_k:=\max\big\{2^k\|\na\phi_k\|^2_\infty,1\big\}
	\end{equation}
	for $k\in\N$.
	Defining $T_{\lam,2}:=L^2(\RR^d\to\RR^d,\lam)$ and
	\begin{align*}
		\HH&:=\Big\{\phi\in T_{\lam,2}:\sum_{k=1}^\infty a_k{\la\phi,\phi_k\ra}_{T_{\lam,2}}^2<\infty\Big\},\\
		{\la\phi_1,\phi_2\ra}_{\HH}&:=\sum_{k=1}^\infty a_k{\la\phi_1,\phi_k\ra}_{T_{\lam,2}}{\la\phi_2,\phi_k\ra}_{T_{\lam,2}},\quad\phi_1,\phi_2\in\HH,
	\end{align*}
	yields a Hilbert space with $\HH\subset L^2(\RR^d\to\RR^d,\lam)\cap \D$, because of the estimate
	\begin{equation*}
		\|\na\phi\|_\infty\leq\sum_{k=1}^\infty{\la\phi,\phi_k\ra}_{T_{\lam,2}}\|\na \phi_k\|_\infty
		\leq \|\phi\|_\HH\Big(\sum_{k=1}^\infty \tfrac{\|\na\phi_k\|_\infty^2}{a_k}\Big)^{1/2}\leq\|\phi\|_\HH
	\end{equation*}
	for $\phi\in\HH$.
	If we choose a sequence ${(b_k)}_{k\in\NN}$ such that $\sum_{k\in\NN}\frac{a_k}{b_k}<\infty$, then the measure
	\begin{equation*}
		G_\D(\d \phi):= \prod_{k=1}^\infty m_k(\d \<\phi_k,\phi\>_{T_{\lam,2}})\quad \text{with}\quad m_k(\d r):=  \Big(\ff{b_k}{2\pi}\Big)^{\ff 1 2} \exp\Big(-\ff{b_kr^2}{2}\Big)\d r\end{equation*}
	guarantees that $(C_1)$ holds true, as each $\phi_k$ is admissible and $G_\D$ is a Gaussian measure on $\HH$, in particular on $\D$. The latter is true, since $G_\D$ can be rewritten as
	\begin{equation*}
		G_\D(\d \phi)= \prod_{k=1}^\infty \tilde m_k(\d \<\tilde \phi_k,\phi\>_{T_{\lam,2}})\quad\text{with}\quad
	\tilde m_k(\d r):=  \Big(\ff{b_k}{2a_k\pi}\Big)^{\ff 1 2} \exp\Big(-\ff{b_kr^2}{2a_k}\Big)\d r
	\end{equation*}
	in terms of the re-scaled basis elements $\tilde \phi_k:=(\alpha_k)^{-1/2}\phi_k$, $k\in\N$, which are normalized in $\HH$.
\end{exa}

We consider the set $\DDp$ as in \eqref{eq:DDp} and a Gaussian measure $G_\D$ on $(\D,\B(\D))$ with full topological support and Condition $(C_1)$. 
The existence of such a measure is shown in Example \ref{exa:GaussD} above. Rather than taking $G_\D\circ\Psi_\lam^{-1}$ itself as the reference measure on $\scr P_p$, we would like to 
work with the image measure of the restriction of $G_\D$ to the set $\DDp$ under $\Psi_\lam$.
The reason is that the relevant energy functional $W_F$ discussed in the introduction is firstly defined almost surely w.r.t.~the latter and secondly $e^{-W_F}$ becomes integrable.
This allows us to define a perturbed Gaussian-based measure with  a density proportional to  $e^{-W_F}$, as this is the subject of Sections \ref{sec:EF} \& \ref{sec:Loc}.
\begin{rem}\label{rem:DDnew}
The set $\DDp$ as in \eqref{eq:DDp} coincides with the set of all continuously differentiable bi-Lipschitz functions $\phi:\RR^d\to\RR^d$ with $\det[\na\phi]\neq 0$ everywhere.
Indeed, if $\phi$ is bi-Lipschitz, then it is easy to show that $\phi(\RR^d)$ is a closed and also an open set (invariance of domain theorem),
and by the inverse mapping theorem $\phi^{-1}:\RR^d\to\RR^d$ is continuously differentiable.

Moreover, $\DDp$ is an open subset of $(\D, d_\D)$ by the following argument.
Let $\phi_0\in\DDp$. We set $(\|\nn \phi_0\|_\infty+\|\nn (\phi_0^{-1})\|_\infty)=:C\in [1,\infty)$ and show that 
\begin{equation*}
\phi\in\D:\qquad 	d_\D(\phi,\phi_0)<(d+1)^{-1}(C+1)^{1-2d}
\end{equation*} implies $\phi\in\DDp$. First, using $\|\na(\phi-\phi_0)\|_\infty\leq d_\D(\phi,\phi_0)\leq \min\{\tfrac 1  4,\tfrac {1}{2C}\}$, we estimate
\begin{align*}
	\frac{1}{2C}|x-y|&\leq \frac 1 C |x-y|- \|\na(\phi-\phi_0)\|_\infty|x-y|\\
	&\leq |\phi_0(x)-\phi_0(y)|-|(\phi-\phi_0)(x)-(\phi-\phi_0)(y)|\\
	&\leq |\phi(x)-\phi(y)|\\
	&\leq |\phi_0(x)-\phi_0(y)|+|(\phi-\phi_0)(x)-(\phi-\phi_0)(y)| \\
	&\leq C|x-y|+\|\na(\phi-\phi_0)\|_\infty|x-y|\leq (C+\tfrac 1 4)|x-y|
\end{align*}
proving that $\phi$ is bi-Lipschitz with $\|\nn \phi\|_\infty\leq C+\tfrac 1 4 $ and $\|\nn (\phi^{-1})\|_\infty\leq 2C$.
Next, we use Hadamard's inequality
\begin{equation}\label{eq:Hadamard}
	\inf_{x\in\RR^d}|\det[\na\phi_0(x)]|=\inf_{x\in\RR^d}\Big({\big|\det\big[\na{(\phi_0^{-1})}(\phi_0(x))\big]\big|}^{-1}\Big)\geq 
	\|\na{(\phi_0^{-1})}\|_\infty^{-d}
\end{equation}
and a well-known perturbation result for the determinant (see e.g.~\cite[Chap.~5]{Bhat})
\begin{equation*}
	\sup_{x\in\RR^d}\big|\det[\na \phi(x)]-\det[\na\phi_0(x) ]\big|\leq d \big(\max\{\|\na\phi \|_\infty,\|\na\phi_0\|_\infty\}\big)^{d-1}\big \|\na(\phi-\phi_0)\big\|_\infty
\end{equation*}
to obtain
\begin{equation*}
	\inf_{x\in\RR^d}|\det[\na\phi(x)]|\geq \|\na{(\phi_0^{-1})}\|_\infty^{-d}-d (C+1)^{d-1}d_\D(\phi,\phi_0)>(C+1)^{-d}(1-\tfrac{d}{d+1})
\end{equation*}
and hence $\phi\in\DDp$.
Since we have shown that $\DDp$ is a non-empty (it contains the identity map) and open subset of $\D$, we have $G_\D(\DDp)>0$ and Remark \ref{rem:loc} applies.
\end{rem}

The composition makes $(\DDp,\circ)$ a group. Hence, the sets
\begin{equation}\label{eq:equivR}
	[\ll]^\sim:= \Psi_\ll(\DDp),\qquad\lam\in\ppac,
	 \end{equation}
are equivalence classes on $\ppac$.
The definition of $\Lam$ depends on a fixed choice of $\lam\in\ppac$. 
The role of that $\lam$ is discussed in Remark \ref{rem:Gauss} below. The essential observation is that,
instead of $\lam$ and \eqref{eq:PsiLam}, we may take any element $\mu\in[\lam]^\sim$.
Then, $\Lam$ can equivalently be represented as the push-forward of some Gaussian measure (which is a linear transform of the Gaussian $G_\D$) under \begin{equation*}
	\Psi_\mu:\D\ni\phi\mapsto\mu\circ\phi^{-1}\in\scr P_p.
\end{equation*}

\begin{rem}\label{rem:Gauss}
	Let $(\D,d_\D)$ be as in \eqref{eq:DD1} and $\scr N_\D$ be the set of all Gaussian measures on $(\D,\B(\D))$ with full topological support.
	
	(i) Given $\phi\in\DDp$ the vector space isomorphism
	\begin{equation*}
		K_\phi:\D\ni\tilde \phi\mapsto\tilde\phi\circ\phi\in\D
	\end{equation*}
	defines permutation on  $\scr N_\D$ through the assignment $$G_\D\mapsto G_\D\circ K_\phi^{-1}.$$
	By the elementary relation
	\begin{equation*}
		(G_\D\circ K_\phi^{-1})\circ\Psi_\lam^{-1}=G_\D\circ\Psi_{\Psi_\lam(\phi)}^{-1}
	\end{equation*}
	the family $\{G_\D\circ\Psi_\lam^{-1}:G_\D\in\scr N_\D\}$ coincides with $\{G_\D\circ\Psi_\mu^{-1}:G_\D\in\scr N_\D\}$
	if $\mu,\lam$ $\in\ppac$ are equivalent in the sense of \eqref{eq:equivR}. This family of measures is henceforth denoted by $\scr G_{[\lam]}$.
	
	(ii) Since $K_\phi(\DDp)=\DDp$ for $\phi\in\DDp$, the same reasoning applies to the family
	\begin{equation*}
		\scr G^1_{[\lam]}:=\Big\{G_\D\circ\Psi_\lam^{-1}: G_\D(\d\phi)=\frac{\eins_{\DDp}(\phi) G_\D'(\d\phi)}{ G_\D'({\DDp})}\text{ for some } G_\D'\in\scr N_\D\Big\}
	\end{equation*}
	for $\lam\in\ppac$. We note that $\Lam(\ppac)=1$ for $\Lam\in \scr G^1_{[\lam]}$, because every function in $\DDp$ is a diffeomorphism $\RR^d\to\RR^d$.
	
	(iii) Let $p=2$.  $\D$ is densely contained in $L^2(\RR^d\to\R^d,\lam)$ for all $\lam\in\ppac$ and so is the Cameron--Martin space $\mathcal H_{G_\D}$ for every $G_\D\in \scr N_\D$.
	Hence, Condition $(C_1)$ is always satisfied if $\lam\in\pac_2$ and $G_\D\in \scr N_\D$. The same is true for  
	$$G_\D:=\frac{\eins_{\DDp} G_\D'}{ G_\D'({\DDp})},\qquad G_\D'\in\scr N_\D,$$
	by Remark \ref{rem:loc} together with Remark \ref{rem:DDnew}.
\end{rem}

In the upcoming Section \ref{sec:EF},
the measure $G_\D(\d\phi)=\frac{\eins_{\DDp}(\phi) G_\D'(\d\phi)}{ G_\D'({\DDp})}\text{ for } G_\D'\in\scr N_\D$ (as in Remark \ref{rem:Gauss}) is the most important example
for which the main assumption (see $(C_3)$ below) and thereby the main result (Theorem \ref{thm:main}) can be verified.
Accordingly, $\scr G^1_{[\lam]}$ is a class of \lq good\rq\, reference measures on $\scr P_p$ for our purpose. Before we conclude this section, we prove a result on the topological support of $\Lam$,
given that $\Lam\in \scr G_{[\lam]}$ or $\Lam\in \scr G^1_{[\lam]}$ in the most relevant case $p=2$.

\begin{lem}\label{lem:W2} Let $\lam\in\twopac$. Any measure $\Lam\in\scr G_{[\lam]}\cup \scr G_{[\lam]}^1$ has full topological support on $\scr P_2$.
	\end{lem}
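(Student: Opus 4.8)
The plan is to show that every open ball $B_{\WW_2}(\nu,r)$ in $\scr P_2$, with $\nu=\rho_\nu\d x\in\scr P_2$ and $r>0$, receives positive $\Lam$-mass. Fix $\Lam\in\scr G_{[\lam]}\cup\scr G_{[\lam]}^1$; by Remark \ref{rem:Gauss}(i)--(ii) we may pick a representative $G_\D\in\scr N_\D$ (or its restriction to $\DDp$, renormalized) and a base point in $[\lam]^\sim$ so that $\Lam=G_\D\circ\Psi_{\lam_0}^{-1}$ for a convenient $\lam_0\in\twopac$. Since $\Psi_{\lam_0}:\DDp\to\scr P_2$ (resp.\ $\Psi_{\lam_0}:\D\to\scr P_2$) is continuous, it suffices to produce one $\phi\in\DDp$ with $\WW_2(\lam_0\circ\phi^{-1},\nu)<r$ together with the fact that $G_\D$ charges every open neighbourhood of $\phi$ in $\D$; the latter is exactly the statement that $G_\D$ has full topological support, which holds for $G_\D\in\scr N_\D$ by definition, and is inherited by the renormalized restriction $\frac{\eins_{\DDp}G_\D'}{G_\D'(\DDp)}$ on the open set $\DDp\subseteq\D$ (using $G_\D'(\DDp)>0$).

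So the crux reduces to a \emph{transport approximation} claim: for any $\nu\in\scr P_2$ and $\ee>0$ there exists $\phi\in\DDp$ with $\WW_2(\lam_0\circ\phi^{-1},\nu)<\ee$. First I would reduce to $\nu$ with a smooth, strictly positive density of compact "effective support": by a standard mollification-plus-truncation argument (Gaussian convolution followed by restricting to a large ball and renormalizing) one finds $\nu'$ close to $\nu$ in $\WW_2$ whose density $\rho_{\nu'}$ is $C^\infty$ and bounded below by a positive constant on any fixed ball, with Gaussian-type tails; similarly I would replace $\lam_0$ by a density with the same qualitative features (this is possible since $\lam_0\in\twopac$ can be chosen, e.g.\ a nondegenerate Gaussian, within the required equivalence class — recall $\Lam$ only depends on $[\lam_0]^\sim$ and in the $p=2$ case $(C_1)$ holds for every such base point by Remark \ref{rem:Gauss}(iii)). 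With both densities smooth and positive, Knothe--Rosenblatt or Brenier/Moser-type constructions give a $C^1$ diffeomorphism $T$ of $\RR^d$ with $T_\#\lam_0=\nu'$ and $\|\nabla T\|_\infty+\|\nabla(T^{-1})\|_\infty<\infty$, i.e.\ $T\in\DDp$; then $\lam_0\circ T^{-1}=\nu'$ exactly, and $\WW_2(\lam_0\circ T^{-1},\nu)\le\WW_2(\nu',\nu)<\ee$.

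The main obstacle I anticipate is the \emph{global Lipschitz control on $T$ and $T^{-1}$} — membership in $\DDp$ is a genuine constraint, not a cosmetic one, because a raw optimal-transport map between two probability densities need not have bounded gradient (Caffarelli-type regularity gives local but not uniform bounds in general, and degenerates as the densities flatten). I would circumvent this by never transporting all the way between two heavy-tailed measures at once: instead, cut off both densities to a large ball $B_\rho$ (putting the excess mass near the boundary), interpolate affinely/radially so the modified densities agree with a fixed reference Gaussian outside $B_{\rho}$, and transport only on $B_\rho$ where both densities are bounded above and below, extending by the identity outside — this yields a bi-Lipschitz $C^1$ map (gluing along the sphere requires a short mollification of the interface). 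The $\WW_2$ error from the cut-offs is controlled by the second moments of the discarded tails, hence $\to0$ as $\rho\to\infty$ since $\nu,\lam_0\in\scr P_2$. A second, minor technical point is checking continuity of $\Psi_{\lam_0}$ from $\D$ (with the $d_\D$ topology of Example \ref{exa:DD}) into $\scr P_2$: convergence $\phi_n\to\phi$ in $d_\D$ forces locally uniform convergence plus a uniform linear growth bound, whence $\lam_0\circ\phi_n^{-1}\to\lam_0\circ\phi^{-1}$ weakly with convergence of second moments (dominated by $\sup_n\|\nabla\phi_n\|_\infty$), i.e.\ in $\WW_2$. Combining: $\Lam\big(B_{\WW_2}(\nu,\ee)\big)\ge\Lam\big(\Psi_{\lam_0}(\text{a }d_\D\text{-nbhd of }T)\big)=G_\D\big(\text{that neighbourhood}\big)>0$, which proves full support.
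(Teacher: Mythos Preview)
Your reduction is correct and matches the paper: it suffices to show $\Psi_\lambda(\DDp)$ is $\WW_2$-dense in $\scr P_2$, after which full support of $G_\D'\in\scr N_\D$ (and of its renormalized restriction to the open set $\DDp$) transfers to $\Lambda$ via continuity of $\Psi_\lambda$. The gap is in the transport-approximation step. You propose to ``replace $\lambda_0$ by \ldots\ e.g.\ a nondegenerate Gaussian, within the required equivalence class'', but this is not generally possible: $[\lambda]^\sim=\Psi_\lambda(\DDp)$ is stable under bi-Lipschitz $C^1$-diffeomorphisms, which preserve compactness of support and irregularity of the density. If $\lambda$ is uniform on a cube, every element of $[\lambda]^\sim$ has compact support, so no Gaussian is available. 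Without a smooth, strictly positive base density your cut-off-and-glue on $B_\rho$ loses the two-sided bound on the \emph{source} density that the Dacorogna--Moser/Knothe constructions need to manufacture a bi-Lipschitz map, and the sketch gives no substitute.

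The paper closes this gap by a construction that uses only $\lambda\in\twopac$, with no regularity assumption on $\rho_\lambda$. For a target $\mu\in\scr P_2$, take the Brenier potential $\varphi$, so $|\cdot|^2-\varphi$ is convex and $\phi:=\id-\tfrac12\nabla\varphi$ pushes $\lambda$ to $\mu$. Set $\phi_n:=(1+\tfrac1n)\,\id-\tfrac12\nabla(k_n*\varphi)$ with a mollifier $k_n$; this is the gradient of a \emph{uniformly} convex $C^2$ function, hence $\nabla\phi_n\ge\tfrac1n I$ everywhere and $\phi_n$ is a global $C^1$-diffeomorphism with $n$-Lipschitz inverse. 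A second regularization $\phi_{n,m}:=(\phi_n^{-1}+\tfrac1m\,\id)^{-1}$ then lies in $\DDp$, since $\tfrac1m I\le\nabla(\phi_n^{-1}+\tfrac1m\,\id)\le(n+\tfrac1m)I$, and $\Psi_\lambda(\phi_{n,m})\to\lambda\circ\phi_n^{-1}\to\mu$ in $\WW_2$ as $m\to\infty$ and then $n\to\infty$. The convexity of the Brenier potential is precisely what delivers the uniform lower eigenvalue bound after an arbitrarily small strict-convexification, so no density regularity, cut-off, or boundary gluing is needed.
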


 \begin{proof}  
It suffices to show that $\Psi_\lam(\D)$ and $\Psi_\lam(\DDp)$ are dense in $\scr P_2 $.
	If so, for any open neighborhood $U$ of an element $\mu\in\scr P_2 $, the sets  $\Psi_\lam^{-1}(U)$ and $\Psi^{-1}_\lam(U)\cap\DDp$ are non-empty open in $\D$ and hence
	they are assigned a positive probability w.r.t.~any non-degenerate Gaussian on $\D$. The density of $\Psi_\lam(\D)$ is trivial, since $\D$ is dense in $L^2(\RR^d\to\R^d,\lam)$
	and for each $\mu\in\scr P_2 $ there is a measurable transport map $\phi\in L^2(\RR^d\to\RR^d,\lam)$ such that $\lam\circ\phi^{-1}=\mu$. Then,
	\begin{equation*}
		\limsup_{n\to\infty}\W_2(\lam\circ\phi_n^{-1},\mu)\leq \limsup_{n\to\infty}{\|\phi_n-\phi\|}_{L^2(\RR^d\to\RR^d,\lam)}=0
	\end{equation*}
	if ${(\phi_n)}_n\subset \D$  is a sequence approximating $\phi$ in $L^2(\RR^d\to\RR^d,\lam)$.
	
	In the case of $\Psi_\lam(\DDp)$, we argue as follows. For an arbitrary element $\mu\in\scr P_2 $ we can choose an optimal pair $(\varphi,\varphi^c)$ such that
	\begin{equation*}
		\lam(\varphi)+\mu(\varphi^c)=\sup_{(f,g)}(\lam(f)+\mu(g))= \W_2(\mu,\ll),
	\end{equation*}
	where the supremum runs over all $(f,g)\in C_b$ with $f(x)+g(y)\leq|x-y|^2$ for $x,y\in\RR^d$ and $\varphi^c(y):=\inf_{x\in\RR^d}(|x-y|^2-\varphi(x))$.
	Then, we know from the theory of optimal transport that $|\cdot|^2-\varphi$ is convex,
	$\varphi$ has an approximate differential $\na \varphi$ and
	\begin{equation*}
		\phi:\RR^d\ni x\mapsto x-\tfrac{1}{2}\na \varphi(x)\in\RR^d
	\end{equation*}
	is an optimal transport map with $\lam\circ\phi^{-1}=\mu$, see \cite[Thm.~6.2.4]{AGS05}.  The claim follows if we can approximate $\phi$ by elements of $\DDp$ in $L^2(\RR^d\to\RR^d,\lam)$.
	
%	For this purpose, let ${\{k_n\}}_{n\in\N}$ be a smooth approximate identity on $\RR^d$ in the sense $k_n=n^d\tau (n\,\cdot\,)$ for some compactly supported, smooth function $\tau:\RR^d\to[0,1]$,
%		$\int_{\RR^d}\tau \d x =1$.
		Since $|\cdot|^2-\varphi$ is convex, 
%	there is a subsequence ${\{k_{l_n}\}}_{n}$ such that, for fixed $n\in\N$, the minimal eigenvalue of the Hessian matrix of $(1+\tfrac {1} {n}) |\cdot|^2- (k_{l_n}*\varphi)$
by mollyfying we can find a sequence ${\{\varphi_n\}}_{n}\in C^1(\RR^d)$ such that, for fixed $n\geq 1$, the minimal eigenvalue of the Hessian matrix of $(1+\tfrac {1} {n}) |\cdot|^2- \varphi_n$
	is bounded from below by a positive constant, i.e.
%	\begin{equation*}
%		\inf_{x\in\RR^d}\,\inf_{z\in\RR^d,\,|z|=1}\,z^\tr\big(2+\tfrac {2} {n} - \na\na (k_{l_n}*\varphi)(x)\big)z >0,
%	\end{equation*}
		\begin{equation*}
		\inf_{x\in\RR^d}\,\inf_{z\in\RR^d,\,|z|=1}\,z^\tr\big(2+\tfrac {2} {n} - \na\na \varphi_n(x)\big)z >0,
	\end{equation*}
	and $\na\varphi_n\to \na\varphi$ in $L^2(\RR^d\to \RR^d,\lam)$
	This implies that the function
	%\begin{equation*}
	%	\phi_n(x):= (1+\tfrac{1}{n})x-\tfrac{1}{2}\na (k_{n_l}*\varphi)(x),\qquad x\in\RR^d,
	%\end{equation*}
		\begin{equation*}
		\phi_n(x):= (1+\tfrac{1}{n})x-\tfrac{1}{2}\na \varphi_n(x),\qquad x\in\RR^d,
	\end{equation*}
	is injective and its inverse mapping $\phi_n^{-1}:\phi_n(\RR^d)\to \RR^d$ is Lipschitz continuous.
	For any Cauchy sequence ${(y_l)}_l\subset \phi_n(\RR^d)$ the image ${(x_l)}_l:={(\phi_n^{-1}(y_l))}_l$ under $\phi_n^{-1}$ is also Cauchy
	and has a limit $x_0:=\lim_{l\to\infty}\phi_n^{-1}(y_l)$. Then, $\phi_n(x_0)=\lim_{l\to\infty}y_l$ and hence $\phi_n(\RR^d)$ is a closed set. By the invariance of domain theorem
	$\phi_n(\RR^d)$ is also open and so,  $\phi_n(\RR^d)=\RR^d$.

	For $m,n\geq 1$, the map  $\RR^d\ni x\mapsto \phi_n^{-1}(x)+\tfrac x m\in\RR^d$
	is differentiable and bi-Lipschitz, since 
	\begin{equation*}
		0<\inf_{x\in\RR^d}\,\inf_{z\in\RR^d,\,|z|=1}\,z^\tr\big(\na(\phi_n^{-1})(x)+\tfrac 1 m\big)z < \sup_{x\in\RR^d}\,\sup_{z\in\RR^d,\,|z|=1}\,z^\tr\big(\na(\phi_n^{-1})(x)+\tfrac 1 m\big)z<\infty.
	\end{equation*}
	Then, also its inverse mapping
	\begin{equation*}
		\phi_{n,m}:= \big(\phi_n^{-1} +\tfrac {\id} m  \big)^{-1}:\RR^d\to\RR^d
	\end{equation*} is differentiable, bi-Lipschitz and hence $\phi_{n,m}\in \DDp$. 
	Now, the claimed density of $\Psi_\lam(\DDp)$ holds true, since 
	\begin{equation*}	
	\Psi_\lam(\phi_{n,m})=\lam\circ\big(\phi_n^{-1}+\tfrac{\id}{m}\big)
	\overset{m\to\infty}{\longrightarrow}\lam\circ\phi_n^{-1}\overset{n\to\infty}{\longrightarrow}\lam\circ\phi=\mu
	\end{equation*}
	 w.r.t.~$\W_2$.
\end{proof}

\section{Perturbation by energy functionals}\label{sec:EF}
In this section, let $\lam\in\ppac$, $p\in[1,2]$, $(\D,d_\D)$ be as in \eqref{eq:DD1} and $\DDp$ be as in \eqref{eq:DDp}. The map $\Psi_\lam:\D\to\scr P_p$ is defined in \eqref{eq:PsiLam}.
We are interested in certain perturbations for the diffusion process ${(R_t)}_{t\ge 0}$ with Dirichlet form $\EE^{\gamma,\Lam}$ given in Proposition \ref{prp:basicP},
which arise from modifying $\Lambda$.
The invariant measure $\Lam$ of ${(R_t)}_{t\ge 0}$ is now multiplied by a factor proportional to $\e^{-W_F}$ (see \eqref{eq:LW}, \eqref{eq:W})
and we obtain a new process associated with a Dirichlet form $\EE^F$ which has the square-field operator $\Gamma$ of $\EE^{\gamma,\Lam}$, but a perturbed reference measure $\Lambda_F$.
The energy functional $W_F$ is of the type  \eqref{eq:W} and thus takes values smaller infinity 
only on a certain domain within the set of absolute continuous measures. Conditions $(C_2)$ and $(C_3)$ below make sure the measure $\Lam_F$
in \eqref{eq:LW} is well-defined and a stochastic quantization exists, i.e.~$\Lam_F$ is the stationary distribution of a gradient diffusion process $(\mu_t)_{t \ge 0}$ (see Theorem \ref{thm:main} below).
Consequently, as stated in Corollary \ref{cor:SFG} below, ${(\mu_t)}_{t\ge 0}$ is a martingale solution to \eqref{eq:SGF0} in the sense of \eqref{eq:M},
in case $W_F\in\D(\EE^{\gamma,\Lam})$ such that $\e^{-W_F}\in\D(\EE^{\gamma,\Lam})\cap L^\infty(\Lam)$.

 We assume that $\ll$ and $F$ satisfy:
\begin{enumerate}\item[$(C_2)$] $\ll\in  \ppac $,   $F:\RR^{d}\times[0,\infty)\to\RR$ is measurable such that  $\int_{\RR^d}\bar F_\aa(x)\d x<\infty$
	for any $\aa\in (1,\infty)$, where
	\begin{equation*}
	\bar F_\aa(x):=	  \sup\Big\{\big| F\big(y,t\rr_\ll(x)\big)\big|:\ (y,t)\in \R^d\times\R,\  |y|\le  \aa(1+ |x|), \  t\in [\aa^{-1},\aa] \Big\}.
	\end{equation*}
\end{enumerate} 
\begin{rem}\label{rem:rhomu}
(i) Every $\phi\in\DDp$ is a $C^1$-diffeomorphism $\RR^d\to\RR^d$ with
\begin{equation*}
\inf_{x\in\R^d}\big|{\rm det}[ \nn \phi(x)]\big|>0,
\end{equation*}
see \eqref{eq:Hadamard}. For $\lam\in\ppac$ the transformation rule yields
\begin{equation*}
	\int_{\RR^d} (f\circ\phi)\lam(\d x)=\int_{\RR^d}\frac{f(x)\rho_\lam(\phi^{-1}(x))\d x}{|\det[\na\phi(\phi^{-1}(x))]|}
\end{equation*}
for bounded measurable $f:\RR^d\to\RR$. Hence, if $\mu=\Psi_\lam(\phi)$, then
\begin{equation*}
	\rho_\mu(x):=\frac{\mu(\d x)}{\d x}=\frac{\rho_\lam(\phi^{-1}(x))}{|\det[\na\phi(\phi^{-1}(x))]|}.
\end{equation*}

 (ii) With Condition $(C_2)$ the density $\e^{-W_F}$ is strictly positive on $\Psi_\lam(\DDp)$. Indeed, for $\phi\in\DDp$,
\begin{equation*}
	\int_{\RR^d}\big|F\big(\phi(x),|\det[\na\phi(x)]|^{-1}\rr_\ll(x)\big) \det[\na\phi(x)]\big|\d x<\infty.
\end{equation*}
Thus, if $\mu=\lam\circ\phi^{-1}$, then $\e^{-W_F(\mu)}>0$ due to
\begin{equation}\label{eq:trafo}
	W_F(\mu)= \int_{\RR^d}F(x,\rho_\mu(x))\d x
	=\int_{\RR^d}F\big(\phi(x),|\det[\na\phi(x)]|^{-1}\rr_\ll(x)\big) \big|\det[\na\phi(x)]\big|\d x.\nonumber
\end{equation}
\end{rem}
We present a relevant example for a function $F$, which is further discussed in Example \ref{exa:F2} and Corollary \ref{cor:locSGH} below. 
\begin{exa}\label{exa:F}
	If $\lam\in \ppac $ with $\ll(|\ln \rho_\lam|)<\infty$,
	then  $(C_2)$ is satisfied  for
	\begin{equation*}
		F(x,s):=sV(x)+\int_0^s\int_1^t\frac{q(r)}{r}\d rd t,\quad x\in\RR^d,\,s\in[0,\infty),
	\end{equation*}
	where $V\in C(\RR^d)$  and $q:(0,\infty)\to(0,\infty)$ is measurable such that
	$$|V(\cdot)|\leq c(1+|\cdot|),\qquad c^{-1}\leq q(\cdot)\leq c,$$
	for some constant $c\in(0,\infty)$. Indeed, we can find a constant $\tilde c\in(0,\infty)$ such that
	\begin{equation*}%\label{eq:Fbound}
		|F(x,s)|\leq \tilde c s(1+|x|+|\ln(s)|),\quad x\in\RR^d, \,s>0,
	\end{equation*}
	by which $\int_{\RR^d}\bar F_\aa(x)\d x<\infty$ for $\alpha\in(1,\infty)$ is obvious.
\end{exa}
\begin{rem}\label{rem:gPME}
	Let $\beta:\RR\to\RR$, $b:\RR\to(0,\infty)$, $\Phi\in \RR^d\to\RR$ be as in \cite[Hypothesis 1]{RR23}, in particular
	\begin{align*}
		&\beta\in C^1(\RR), \quad\beta(0)=0,\quad c^{-1}\leq \beta'\leq c,\\
		&b\in C_b(\RR)\cap C^1(\RR),\quad c^{-1}\leq b,\\
		&\Phi\in C^1(\RR^d),\quad \na\phi\in C_b(\RR^d,\RR^d),
	\end{align*}
	 for some constant $c\in(0,\infty)$ and 
	\begin{equation}\label{eq:FRR}
		F(x,s):=s\Phi(x)+\int_0^s\int_1^t\frac{\beta'(r)}{rb(r)}\d rd t,\quad x\in\RR^d,\,s\in[0,\infty).
	\end{equation}
	By \cite[Thm.'s~2, 3.7 \& 3.8]{RR23} there is a one-to-one correspondence between the probability solutions to
	the generalized porous media equation \eqref{eq:gPME}
	and the gradient flow in \eqref {eq:I1}.
	If $\lam\in \ppac $ with $\ll(\ln| \rho_\lam|)<\infty$, then $(C_2)$ holds for $F$ in \eqref{eq:FRR}, 
	as this is a special case of Example \ref{exa:F}. In Example \ref{exa:F2} below, where we continue in this setting, we 
	choose $\Lam$ from the family $\scr G_{[\lam]}^1$ of Gaussian-based measures (see Remark \ref{rem:Gauss} (ii)) with $p=2$ and 
	give an existence statement for a diffusion ${(\mu_t)}_{t\geq 0}$ on $\scr P_2$ which has invariant measure $\Lam_F$ as defined in \eqref{eq:LW}.
	The representation of  ${(\mu_t)}_{t\geq 0}$ as a stochastic gradient flow is discussed in Section \ref{sec:Loc}.
\end{rem}

The next condition makes sure the bilinear form in \eqref{eq:Dform} is well-defined on $C_b^1(\scr P_p)$ and has a closed extension in $L^2(\scr P_p,\Lam_F)$ which yields a diffusion process on $\scr P_p$.
\begin{enumerate} 
	\item[$(C_3)$]  
	For given $\lam\in\ppac$ together with a probability measure $G_\D$ on $(\D,\B(\D))$ and a function $F:\RR^{d}\times(0,\infty)\to\RR$ we assume:
	\begin{itemize}
		\item $(C_1)$, $(C_2)$ hold true.
\item $G_\D(\DDp)=1.$
\item$\displaystyle Z_F:=\int_{\DDp}\e^{-W_F(\ll\circ \phi^{-1})}G_\D(\d\phi)<\infty.$
\end{itemize}
\end{enumerate}

\begin{rem}
With Condition $(C_3)$ and $\Lam:=G_\D\circ\Psi_\lam^{-1}$, it follows $\Lam(\ppac)=1$ and the function $\e^{-W_F}$ is strictly positive, $\Lam$-almost surely, because of \eqref{eq:trafo}.
Moreover, $\int_{\scr P_p } \e^{-W_F(\mu)}\LL(\d\mu)=\int_{\D_1}\e^{-W_F(\ll\circ \phi^{-1})}G_\D(\d\phi)$ and so, \eqref{eq:LW}
defines a probability measure on $\ppac$. 
\end{rem}

In Theorem \ref{thm:main} below, we assume $(C_3)$
and consider the bilinear form 
\beq\label{eq:EF}
\EE^F(u,v):=\int_{\scr P_p} \GG(u,v)\d\LL_F,\ \ \ u,v\in C_b^1(\scr P_p),
\end{equation}
with $\Gamma$,  $\Lam_F$
as in \eqref{eq:Gammadef} and \eqref{eq:LW}.

\begin{thm}\label{thm:main} 
$(\EE^F, C_b^1(\scr P_p))$ has a minimal closed extension in $L^2(\scr P_p,\LL_F)$ which is  a quasi-regular, strongly local Dirichlet form.
In particular, there exists a non-terminating diffusion $(\Omega, \scr F,(\mu_t)_{t\geq 0},$ $(\P^\mu)_{\mu\in \scr P_p})$ on $\scr P_p $ which is  associated with
the minimal closed extension, which we denote by $(\EE^F,\D(\EE^F))$.
\end{thm}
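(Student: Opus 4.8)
The plan is to run the standard three-step Dirichlet form machine, transferring as much as possible from Proposition~\ref{prp:basicP} via the multiplication by the density $\frac{1}{Z_F}e^{-W_F}$. First I would note that $\LL_F$ is a genuine probability measure on $\scr P_p$ with $\LL_F(\ppac)=1$, that $\LL_F$ and $\LL$ are mutually absolutely continuous on the set $\{e^{-W_F}>0\}$ (which carries full $\LL$-mass by Condition $(C_3)$ and the identity \eqref{eq:trafo}), and that $W_F$ is pulled back through $\Psi_\lam$ to $G_\D$ via \eqref{eq:trafo} so that finiteness of $Z_F$ is exactly the finiteness assumed in $(C_3)$. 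Since $\Gamma(u,v)(\mu)=\mu(\gamma_\mu\la Du(\mu),Dv(\mu)\ra)$ is bounded for $u,v\in C_b^1(\scr P_p)$ by $c\,\sup_\mu\|Du(\mu)\|_{L^{p/(p-1)}(\mu)}\sup_\mu\|Dv(\mu)\|_{L^{p/(p-1)}(\mu)}$, the integral \eqref{eq:EF} is finite, so $(\EE^F,C_b^1(\scr P_p))$ is a well-defined symmetric, non-negative definite bilinear form on a dense subspace of $L^2(\scr P_p,\LL_F)$.

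For \emph{closability}, I would exploit that the square-field operator is unchanged: pull everything back under $\Psi_\lam$ to the linear space $\D$. The pre-form $\EE^F$ is the image under $\Psi_\lam$ of the form $\tilde\EE^F(f,g)=\int_{\DDp}\lam(\gamma(\Psi_\lam(\phi),\phi(\cdot))\la\nabla f(\phi),\nabla g(\phi)\ra)\,\frac{e^{-W_F(\lam\circ\phi^{-1})}}{Z_F}G_\D(\d\phi)$ acting on $C_b^1(X)$-functions, using the chain rule \eqref{eq:ChainRIntrD}. On $\D$ (equivalently on $X=L^p(\RR^d\to\RR^d,\lam)$, identifying $\lam$-classes) closability of gradient-type forms perturbed by a density follows the classical directional-derivative argument as in \cite[Sect.~II.3]{MR92}: write $\tilde\EE^F=\sum_k\EE_{\phi_k}^{F}$ where $\EE_{\phi_k}^F$ is the one-dimensional directional form in the direction of the $G_\D$-admissible basis vector $\phi_k$ taken w.r.t.\ the reference measure $\frac{e^{-W_F}}{Z_F}G_\D$. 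Each $\EE_{\phi_k}^F$ is closable because the conditional one-dimensional measures still satisfy the Hamza condition after multiplication by the strictly positive density $e^{-W_F}$ (this is the place where I would use Remark~\ref{rem:loc}-type reasoning together with $e^{-W_F}>0$ $G_\D$-a.s.); a countable sum of closable forms with a common core is closable, and closability is inherited by the image form under $\Psi_\lam$ because $C_b^1(\scr P_p)\ni u\mapsto u\circ\Psi_\lam\in C_b^1(X)$ and the value of the form is preserved. The Markovian property is inherited exactly as in Proposition~\ref{prp:basicP} from $\kappa\circ u\in C_b^1(\scr P_p)$ for $\kappa\in C_b^1(\RR)$, $\kappa$ $1$-Lipschitz, giving the Dirichlet form $(\EE^F,\D(\EE^F))$.

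\emph{Strong locality} is immediate: the square-field operator of $(\EE^F,\D(\EE^F))$ on the core $C_b^1(\scr P_p)$ is still $\Gamma(u,v)(\mu)=\mu(\gamma_\mu\la Du(\mu),Dv(\mu)\ra)$ (only the reference measure changed, not the carr\'e du champ), and $\Gamma(u,u)=0$ whenever $u$ is $\LL_F$-a.e.\ constant on $\supp[v]$; conservativity holds because $\eins\in C_b^1(\scr P_p)$ with $\Gamma(\eins,\eins)=0$. For \emph{quasi-regularity} I would invoke the criterion of \cite[Thm.~2.1]{RWW24} in the same way as in the proof of Proposition~\ref{prp:basicP}(i): $\D(\EE^F)\supseteq C_b^1(\scr P_p)$ contains all differentiable cylinder functions on $\scr P_p$, these separate points and contain a countable point-separating family, there is a dense subset of continuous functions, and the bound $\EE^F(u,u)\le\frac{c}{Z_F}\int e^{-W_F}\d\LL\cdot\sup_\mu\|Du(\mu)\|^2_{L^{p/(p-1)}(\mu)}=c\,\sup_\mu\|Du(\mu)\|^2_{L^{p/(p-1)}(\mu)}$ holds for $u\in C_b^1(\scr P_p)$, which is exactly the input that criterion needs (the extra factor $\frac{e^{-W_F}}{Z_F}\le\frac{1}{Z_F}e^{-\inf W_F}$ causes no trouble since what matters is a bound of the Dirichlet norm by a sup-norm of the gradient, not the precise constant, and an $\LL_F$-nest of compacts can be transported from an $\LL$-nest using $\LL_F\ll\LL$). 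Finally, a quasi-regular, local, conservative symmetric Dirichlet form is properly associated with a non-terminating diffusion by \cite[Thm.~IV.6.7 and Sect.~V.1]{MR92}, which gives the claimed process $(\Omega,\scr F,(\mu_t)_{t\ge 0},(\P^\mu)_{\mu\in\scr P_p})$.

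The main obstacle I anticipate is \textbf{closability}: one must verify that multiplying $G_\D$ by $e^{-W_F}$ preserves the Hamza condition along each admissible direction $\phi_k$, i.e.\ that for $\nu_{\phi_k}$-a.e.\ $\eta$ the one-dimensional conditional density $s\mapsto e^{-W_F(\Psi_\lam(\eta+s\phi_k))}\rho_{K(\eta,\cdot)}(s)$ has a locally integrable reciprocal on an open set of full measure. Strict positivity of $e^{-W_F}$ on $\Psi_\lam(\DDp)$ is guaranteed by $(C_2)$ via \eqref{eq:trafo}, but I would need to check that the relevant one-dimensional sections still live in $\DDp$ (so that $e^{-W_F}$ stays strictly positive there) — for the space $\D$ in \eqref{eq:DD1} this is where one uses that $\DDp$ is open in $\D$ and $G_\D$-conull, so conditioning to $\DDp$ and applying Remark~\ref{rem:loc} keeps the Hamza structure intact. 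Once this one-dimensional check is in place, the rest is the routine assembly described above.
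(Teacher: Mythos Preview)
Your overall strategy coincides with the paper's: reduce via Proposition~\ref{prp:basicP} to verifying $(C_1)$ for the perturbed measure $\frac{1}{Z_F}(e^{-W_F}\circ\Psi_\lam)\,G_\D$, i.e.\ to checking that the one-dimensional Hamza condition along each admissible $\phi_k$ survives multiplication by this density. Once that is done, closability, the Markov property, strong locality and quasi-regularity all follow from Proposition~\ref{prp:basicP} exactly as you indicate. The gap is in how you justify the Hamza preservation. You invoke strict positivity of $e^{-W_F}$ on $\Psi_\lam(\DDp)$ together with ``Remark~\ref{rem:loc}-type reasoning'', but Remark~\ref{rem:loc} concerns conditioning to an \emph{open set}, not multiplying by an arbitrary positive density. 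If $m(\d s)=\rho_m(s)\d s$ satisfies \eqref{eq:hamza} and $\varrho>0$ is merely measurable, $\varrho m$ need not satisfy \eqref{eq:hamza}: one needs $(\varrho\rho_m)^{-1}\in L^1_{\loc}$, and pointwise positivity of $\varrho$ gives no local lower bound. (On $[0,1]$ with Lebesgue measure, $\varrho(s):=|s-\tfrac12|$ for $s\neq\tfrac12$ and $\varrho(\tfrac12):=1$ is strictly positive everywhere, yet $\varrho^{-1}\notin L^1_{\loc}$.)

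What the paper actually proves---and this is the only analytic content of the argument---is that $\phi\mapsto e^{-W_F(\lam\circ\phi^{-1})}$ is \emph{continuous} on the open set $\DDp$ with respect to $d_\D$. Continuity together with strict positivity yields a local \emph{lower bound} on each one-dimensional section $s\mapsto e^{-W_F(\Psi_\lam(\eta+s\phi_k))}$, after which Hamza for $K(\eta,\cdot)$ passes trivially to the product measure. This continuity is precisely where the quantitative part of $(C_2)$ enters: the integrable envelope $\bar F_\alpha$ dominates the integrand in the representation \eqref{eq:trafo2} uniformly over a $d_\D$-ball around each $\xi\in\DDp$ (via a determinant lower bound of the type \eqref{eq:SpecLB}), so Lebesgue's dominated convergence theorem gives $W_F\circ\Psi_\lam\in C(\DDp)$. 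Your proposal never extracts this continuity from $(C_2)$---you use $(C_2)$ only for finiteness and strict positivity via \eqref{eq:trafo}---so the closability step is incomplete as written.
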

\begin{proof} By Proposition \ref{prp:basicP}, it suffices to verify condition $(C_1)$ for the measure
	$  (\e^{-W_F}\circ\Psi_\lam)\d G_\D$ replacing $G_\D$.
	
	We observe, that the Hamza Condition \eqref{eq:hamza} for a probability measure $m$ on $(\RR,\B(\RR))$ is inherited to $\varrho m$, if $\varrho :\RR\to [0,\infty)$ is measurable,
	$\int_\RR \varrho\d m=1$ and
	$$m\big(\{t\in\RR:\varrho  \text{\ is\ continuous\ on }\  (t-\varepsilon,t+\varepsilon) \text{\ for\ some }\varepsilon>0\}\big)=1.$$
	So, inferring that $\DDp$ is an open set in $\D$ (see Remark \ref{rem:DDnew}), the claim follows once we show that $  \e^{-W_F}\circ\Psi_\lam$ is continuous on $\DDp$.
	%	 \begin{equation}\label{eq:contRay}
		%	 	G_\D\big(\big\{\phi\in \D:\exists\varepsilon>0:(-\varepsilon,\varepsilon)\ni s\mapsto g_F\big(\Psi_\lam(\phi+s\phi^*)\big)\in\RR\text{ is continuous}\big\}\big)=1.
		%	 \end{equation}
	Let $\xi\in\DDp$, i.e.$(\|\nn \xi\|_\infty+\|\nn (\xi^{-1})\|_\infty)=:C\in[1,\infty)$, and
	\begin{equation*}
		B(\xi):=\Big\{\phi\in\D: d_\D(\phi,\xi)< (d+1)^{-1}(C+1)^{1-2d}   \Big\}.
	\end{equation*}
	Then, by Remark \ref{rem:DDnew}, $B(\xi)\subset\DDp$  with 
	\begin{equation}\label{eq:SpecLB}
		\|\na\phi\|_\infty\leq C+\tfrac 1 4\quad\text{and}\quad \inf_{x\in \RR^d}|\det[\na\phi(x)]| > (C+1)^{-d}(1-\tfrac{d}{d+1})\qquad \text{for }\phi\in B(\xi).
	\end{equation}
	Analogously to \eqref{eq:trafo}, the transformation formula yields
	\begin{equation}\label{eq:trafo2}
		\int_{\RR^d}F(x,\rho_{\lam\circ\phi^{-1}}(x))\d x
		=\int_{\RR^d} F\big(\phi(x),|\det[\na\phi(x)]|^{-1}\rho_\ll(x)\big)  |\det[\na\phi(x)]| \d x
	\end{equation}
	for $\phi\in B(\xi)$.
	Due to $(C_2)$ and \eqref{eq:SpecLB} we can find a $\d x$-integrable function which dominates the integrand of the right-hand side of \eqref{eq:trafo2}, uniformly for $\phi\in B(\xi)$.
	Hence, using Lebesgue's dominated convergence and \eqref{eq:trafo2}, if $\{\phi_n\}_{n\in\NN}\subset \DDp$, $\phi\in B(\xi)$, such that $\lim_{n\to\infty}d_\D(\phi_n,\phi)=0$, then
	\begin{align*}
		\lim_{n\to\infty}(W_F\circ\Psi_\lam)(\phi_n)&=\lim_{n\to\infty}\int_{\RR^d} F\big(\phi_n(x),|\det[\na\phi_n(x)]|^{-1}\rho_\ll(x)\big)  |\det[\na\phi_n(x)]| \d x\\
		&=\int_{\RR^d} F\big(\phi(x),|\det[\na\phi(x)]|^{-1}\rho_\ll(x)\big)  |\det[\na\phi(x)]| \d x\\
		&=\int_{\RR^d}F(x,\rho_{\lam\circ\phi^{-1}}(x))\d x=(W_F\circ\Psi_\lam)(\phi).
	\end{align*}
	This implies continuity of $\e^{-W_F}\circ\Psi_\lam$ on $\DDp$ and concludes the proof.
\end{proof}

The next example shows that Theorem \ref{thm:main} is applicable for $F$ as in Example \ref{exa:F} and $\Lam\in \scr G_{[\lam]}^1$ as defined in Remark \ref{rem:Gauss} (ii).
We choose $p=2$ to ensure Condition $(C_1)$ through the argument of  Remark \ref{rem:Gauss} (iii).

\begin{exa}\label{exa:F2}
	Let $\lam\in\twopac$, $\ll(|\ln \rho_\lam|)<\infty$, $F:\RR^{d}\times[0,\infty)\to\RR$ be as in Example \ref{exa:F} and $\Lam\in \scr G_{[\lam]}^1$.
	There exist constants $c_1,c_2\in(0,\infty)$ (depending on $F$ and $\lambda$) such that for all $\phi\in \DDp$ we have
	\begin{align}\label{eq:AH}
		-W_F(\ll\circ\phi^{-1})  &=-   \int_{\RR^d}F\big(\phi(x),\tfrac{\rr_\ll}{|\det[\na\phi]|}(x)\big) |\det[\na\phi(x)]| \d x \\
		&\leq  c_1 \int_{\RR^d}\Big(1+|\phi(x)|  -\ln \big(\tfrac{\rr_\ll}{|\det[\na\phi]|}(x)\big)  \Big)\rr_\ll(x)\d x\nonumber\\
		&\leq  c_1 \int_{\RR^d}\Big(1+|\phi(x)|  -\ln (\rr_\ll(x))+d\ln\big(|\na\phi(x)|_\textnormal{op}\big)  \Big)\rr_\ll(x)\d x\nonumber\\
		&\leq  c_2 (1+ \|\phi\|_{\D}),\nonumber
	\end{align}
	where $|\na\phi(x)|_\textnormal{op}$ denotes the operator norm of the Jacobian matrix $\na\phi(x)$ and 
	\begin{equation*}
		{\|\phi\|}_{\D}:=\int_{\RR^d}|\phi|+|\nn \phi|_\textnormal{op}\d\ll.
	\end{equation*}
	Since $\|\cdot\|_{\D}$ is a measurable norm on $(\D,d_\D)$, for any Gaussian measure $G_\D$ there exists
	$\alpha>0$ such that $G_\D(\e^{\aa\|\cdot\|_{\D}^2})<\infty$, see \cite[Thm.~2.8.5]{B98}.
	This implies $G_\D(\e^{\|\cdot\|_{\D}})<\infty$ and so,
	\begin{equation*}
		Z_F=\int_{\D_1} \e^{-W_F(\ll\circ\phi^{-1})}G_\D(\d\phi)<\infty
	\end{equation*}
	in view of \eqref{eq:AH} for any Gaussian measure $G_\D$ on $\D$. In particular, the measure $\LL_F(\d\mu):= Z_F^{-1} \e^{-W_F(\mu)}\LL(\d\mu)$ is well-defined, since $\Lam\in \scr G_{[\lam]}^1$.
	By Theorem \ref{thm:main}  there exists a non-terminating diffusion on $\scr P_2$ which is associated with the minimal closed extension of $(\EE^F,C_b^1(\scr P_p))$ as in  \eqref{eq:EF}.
\end{exa}

The diffusion obtained under $(C_3)$ by Theorem \ref{thm:main} is denoted by ${(\mu_t)}_{t\ge 0}$ in the following. It has invariant measure  
$\Lam_F$ as in \eqref{eq:LW}. We can regard ${(\mu_t)}_{t\ge 0}$ as a perturbation of the process 
${(R_t)}_{t\ge 0}$ with invariant measure $\Lam$, which arises from the choice $F=0$ and has Dirichlet form $(\EE,\D(\EE)):=(\EE^{\gamma,\Lam},\D(\EE^{\gamma,\Lam}))$ as in Proposition \ref{prp:basicP}.

To make sense of the drift term involving $D^\gamma W_F$ for an energy functional $W_F\in\D(\EE)$, we introduce the \lq weak intrinsic derivative\rq,
 which is a natural extension of the intrinsic gradient $D$ in Definition \ref{D1}.
By construction of the minimal closed extension in Proposition \ref{prp:basicP} (i),
for any $u\in \D(\EE)$,  we find a sequence  ${(f_m )}_{m}
\subset   C_b^1(X)$, where $X:= L^p(\RR^d\to\R^d,\lam)$, such that: 
\begin{itemize}
	\item[(a)] $f_m\to u\circ\Psi_\lam$  in $L^2(\D,G_\D)$;
	\item[(b)] $f_m=u_m\circ\Psi_\lam$ for  some $u_m\in C_b^1(\scr P_p)$;% and
%	\begin{equation*}
%		\na f_m(\phi)= Du_m(\Psi_\lam(\phi))\circ\phi,\qquad \phi\in X.
%	\end{equation*}
	\item[(c)] $\displaystyle \int_\D{\big\la\na (f_m-f_k),\na (f_m-f_k)\big\ra}_{L^2(\R^d\to\R^d,\ll)}\d G_\D\overset{m,k\to\infty}{\longrightarrow}0$, i.e.
	${(\na f_m)}_{m}$ is a Cauchy sequence in $L^2(\D\to L^2(\R^d\to\R^d,\ll),G_\D)$.
\end{itemize}
The limit
\begin{align*}
	\D\ni \phi\mapsto	\tilde D^u(\phi):=\lim_{m\to\infty}\na f_m(\phi)\qquad\text{in }L^2(\D\to L^2(\R^d\to\R^d,\ll),G_\D)
\end{align*}
only depends on $u$  (it does not depend on the choice of an approximating sequence ${(f_m)}_{m}$, as a consequence of the closability) and is measurable w.r.t.~$\sigma(\Psi_\lam)$.
We write $\mu\times \LL(\d\mu)$ for the probability measure on $\R^d\times\scr P_p$ such that
\begin{equation*}
\big(\mu\times \LL(\d\mu)\big)(A)=\int_{\scr P_p}\int_{\RR^d}\eins_A(x,\mu)\mu(\d x)\Lam(\d \mu)
\end{equation*}
for any measurable set $A\subseteq \R^d\times\scr P_p$.
In view of $\LL=G_\D\circ\Psi_\lam^{-1}$ the linear map
\begin{align*}
	L^2\big(\R^d\times\scr P_p\to\RR^d,\mu\times \LL(\d\mu)\big)\,&\longrightarrow\, L^2(\D\to L^2(\R^d\to\R^d,\ll),G_\D)\\
	{\{V(x,\mu)\}}_{x\in\RR^d,\mu\in\scr P_p}&\longmapsto \big\{V\big(\phi(\cdot),\Psi_\lam(\phi)\big)\big\}_{\phi\in\D}
\end{align*}
is an isometry. By (b) and \eqref{eq:ChainRIntrD} we conclude that $\tilde D^u$ lies in the image set of this isometry.
We obtain the following notion of a weak intrinsic gradient.

\beg{defn}[Weak intrinsic gradient]\label{def:WID} For  $u\in  \D(\EE)$ the unique element  
\begin{equation*}
Du\in  L^2\big(\R^d\times\scr P_p\to\RR^d,\mu\times \LL(\d\mu)\big)
\end{equation*}
such that
\begin{equation*} D u\big(\phi(\cdot),\Psi_\lam(\phi)\big)= \tilde D^u(\phi),\qquad G_\D\text{-a.e.~}\phi,\end{equation*}
is called  the \textit{weak intrinsic gradient} of $u$. 
\end{defn}
It is clear that the weak intrinsic gradient of $u\in C_b^1(\scr P_p)$ coincides with the intrinsic derivative $Du$.
Moreover, the square-field operator $\Gamma$ of $(\EE,\D(\EE))$ extends to
\begin{equation}\label{eq:weakSF}
	\Gamma(u,v)(\mu)= \int_{\RR^d}\gamma_\mu(x) \big\la Du(x,\mu), Dv(x,\mu)\big\ra\mu(\d x),\quad \Lam\text{-a.e.~}\mu\in\scr P_p,\,u,v\in\D(\EE).
\end{equation}

We formulate a corollary of Theorem \ref{thm:main}. 
\begin{cor}\label{cor:SFG}
	In addition to $(C_3)$ we assume $W_F\in\D(\EE)$ and $\e^{-W_F}\in\D(\EE)\cap L^\infty(\scr P_p,\Lam)$. Let $(A,\D(A))$ be the generator of $(\EE,\D(\EE))$ in $L^2(\scr P_p,\Lam)$.
	The diffusion $(\Omega, \scr F,(\mu_t)_{t\geq 0},$ $(\P^\mu)_{\mu\in \scr P_p})$ on $\scr P_p $ 
	which is  associated with $(\EE^F,\D(\EE^F))$  yields a solution to  \eqref{eq:M}. More precisely:
	
	Let $u\in \D(A)$ and $\tilde u$ denote an $\EE^F$-quasi-continuous version. Then,
	\begin{equation}\label{eq:AF}
		\tilde u(\mu_t)-\tilde u(\mu_0)-\int_0^t\Big[Au(\mu_s)-\int_{\RR^d}\big\la\gamma_{\mu_s}(x) DW_F(x,\mu_s),Du(x,\mu_s)\big\ra\mu_s(dx)\Big]\d s,\qquad t\ge 0,
	\end{equation}
	is a $\P^\mu$-martingale for $\EE^F$-q.e.~$\mu\in\scr P_p$.
\end{cor}
\begin{proof}
	We note that $\e^{-W_F}\in L^\infty(\scr P_p,\Lam)$ implies $\D(A)\subseteq \D(\EE)\subseteq \D(\EE^F)$. 
	Due to integrability of $Au(\mu)-\int_{\RR^d}\la\gamma_{\mu}(x) DW_F(x,\mu),Du(x,\mu)\ra\mu(dx)$ w.r.t.~$\Lam_F(\d\mu)$ for $u\in\D(\EE^F)$,
	\eqref{eq:AF} is well-defined as an additive functional of the process ${(\mu_t)}_{t\ge 0}$.
	We can apply Lemma \ref{lem:generator} with the choice $\varrho:=\e^{-W_F}$, using  $D\varrho=-\e^{-W_F}DW_F$.
	The statement follows by Remark \ref{rem:MS}.
\end{proof}

\section{Functionals with local weak gradient}\label{sec:Loc}
This section is motivated by the following observation.
In case of Example \ref{exa:F2} we cannot show that $W_F$ has a weak intrinsic gradient in the sense of Definition \ref{def:WID}, because the candidate for its gradient $D W_F(\mu)= \na V+\tfrac{q(\rho_\mu)\na\rho_\mu}{\rho_\mu}$, where $\rho_\mu=\frac{\d\mu}{\d x}$, doesn't have the desired integrability properties w.r.t.~$\Lam(\d\mu)$. 
We introduce the notion of a \lq local weak intrinsic gradient\rq.
Then, under suitable Lipschitz conditions on $F$ and $\partial_2F$, by Theorem \ref{thm:localDom} below, $W_F$ has a local gradient in this sense. The application to Example \ref{exa:F2} is addressed subsequently.

Let $\lam$, $G_\D$ satisfy $(C_1)$ (see Section \ref{ssec:indT}) with $(\D,d_\D)$ as in \eqref{eq:DD1}. Defining
\begin{equation*}
	\D_2:=\big\{\phi\in \D:\na \phi\in C_b^1(\RR^d,\RR^{d\times d}) \big\}
\end{equation*}
and $\DDp$ as in \eqref{eq:DDp}, throughout this section we additionally assume
\begin{equation*}
G_\D(\D_1\cap\D_2)=1
\end{equation*}
and the full topological support of $G_\D$ on $\D_2$ (i.e.~$G_\D$ is non-zero for any non-empty, open subset of $\D_2$ w.r.t.~$d_{\D_2}$ in \eqref{eq:dDD2} below).
First, we argue why we can find a reasonable measure (in fact a restricted Gaussian) with that assumption.
The linear space $\D_2$ is complete regarding the metric
\begin{equation}
	\label{eq:dDD2} d_{\D_2}(\phi_1,\phi_2):=d_\D(\phi_1,\phi_2)  +\big\| \na^2\phi_1-\na^2\phi_2 \big\|_{\infty },\quad \phi_1,\phi_2\in\D_2,
\end{equation}
where we write
\begin{equation*}
\|\na^2\phi\|_\infty:=\sup_{\substack{x,y\in\RR^d\\x\ne y}}\ff{|\na\phi(x)-\na\phi(y)|_{\textnormal{op}}}{|x-y|}\end{equation*}
for $\phi\in\D_2$ and $|\cdot|_{\textnormal{op}}$ denotes the operator norm for matrices.

\begin{rem}\label{rem:Gauss2} 
	Let  $\lam\in\ppac$, $p\in[1,2]$.
A choice for $G_\D$ which fulfills $G_\D(\D_1\cap\D_2)=1$, $(C_1)$ and has full topological support on $\D_2$ can always be found in the class of restricted Gaussian measures, as follows. First, we define a non-degenerate Gaussian measure 
on the vector space $\scr D_2$, analogously as done in Example \ref{exa:GaussD} for the space $\D$.
This is achieved by 
starting with any sequence ${\{\phi_k\}}_{k\in\mathbb N}\subset\D_2$ which is an
orthonormal basis of $L^2(\RR^d\to\RR^d,\lam)$
and replacing the choice of $a_k$ in \eqref{eq:ak} by
\begin{equation*}
	a_k:=\max\big\{2^k\|\na\phi_k\|^2_\infty,2^k \|\na^2\phi_k\|^2_\infty, 1\big\},\qquad k\in\NN.
\end{equation*}
Then, we proceed in the exact same way and obtain a non-degenerate Gaussian measure on $\scr D_2$ (the estimates are analog, but carried out for $\|\na ^2 \phi \|_{\infty }$ as well).
Now, Property $(C_1)$ is preserved if we restrict that Gaussian to the set $\scr D_1$, which is open in $(\D,d_\D)$, see Remarks \ref{rem:loc} \&  \ref{rem:DDnew}.
So, by restriction to $\scr D_1$ (and re-scaling)  we end up with a measure $G_\D$ on $\scr D_2$ such that $G_\D(\D_1\cap\D_2)=1$ and Condition $(C_1)$ holds true. 
\end{rem}

To define the local weak intrinsic gradient we introduce the sets
\begin{equation*}
	\D^{(n)}:=\Big\{\phi\in \D_2\cap\DDp: \|\nn(\phi^{-1})\|_\infty
	+ d_{\D_2}(\phi,0)<n\Big\},\qquad n\in\NN,
\end{equation*}
which are increasing  to $\DDp\cap\D_2$ as $n\uparrow\infty$ and 
each $\D^{(n)}$ is open (see Remark \ref{rem:DDDnew} below) in $(\D_2,d_{\D_2})$. 
For $n\geq 3$ the set $\D^{(n)}$ is non-empty (it contains the identity map) and we define
\begin{equation}\label{eq:LamEps} \Lam^{(n)}:=\Big(\frac{\eins_{\D^{(n)}}G_\D}{G_\D(\D^{(n)})}\Big)\circ\Psi_\lam^{-1}
\qquad\text{and}\qquad \Lam:=G_\D\circ\Psi_\lam^{-1}.\end{equation}

\begin{rem}\label{rem:DDDnew}
	$\D_1$ is open in  $(\D_2,d_{\D_2})$ as a consequence of Remark \ref{rem:DDnew}.
	To show that $\D^{(n)}$ is open in $(\D_2,d_{\D_2})$ it suffices to prove that the set
	\begin{equation*}
		A_M:=\big\{\phi\in \D_2: \|\nn(\phi^{-1})\|_\infty<M\big\}
	\end{equation*}
	is open for each $M\in(0,\infty)$. Let $\phi_0\in A_M$, $\varepsilon<1$ such that $\|\nn(\phi_0^{-1})\|_\infty< \varepsilon M$
	 and $\phi\in\D_2$ such that 
	\begin{equation*} d_{\D_2}(\phi,\phi_0)<\|\nn(\phi_0^{-1})\|_\infty^{-1}-(\varepsilon M)^{-1}.\end{equation*}
	Then,
	\begin{align*}
		\frac{|x-y|}{\varepsilon M}&< \frac{|x-y|}{\|\nn(\phi_0^{-1})\|_\infty}-\|\na(\phi-\phi_0)\|_\infty\\
		&\leq |\phi_0(x)-\phi_0(y)|-\big|(\phi-\phi_0)(x)-(\phi-\phi_0)(y)\big|\\
		&\leq |\phi(x)-\phi(y)|
	\end{align*}
	and hence  $\|\nn(\phi^{-1})\|_\infty\leq \varepsilon M<M$, i.e.~$\phi\in A_M$.
\end{rem}

By Proposition \ref{prp:basicP}, we obtain a Dirichlet form $(\EE^{(n)},\D(\EE^{(n)})):=(\EE^{\gamma,\Lam^{(n)}},\D(\EE^{\gamma,\Lam^{(n)}}))$ in 
$L^2(\scr P_p ,\LL^{(n)})$  for each $n\in\NN_{\geq 3}$, as well as $(\EE,\D(\EE)):= (\EE^{\gamma,\Lam},\D(\EE^{\gamma,\Lam}))$.
 The domains $\D(\EE^{(n)})$ are decreasing in $n$ and
$$\D(\EE)\subseteq  \D(\EE^{(\infty)}) :=\bigcap_{n=3}^\infty\D(\EE^{(n)}).$$
It should be mentioned that  $\supp[\LL^{(n)}]$ is compact w.r.t.~the $p$-Wasserstein topology, due to
continuity of the map $$L^p(\RR^d\to\R^d,\lam)\ni\phi\mapsto\Psi_\ll(\phi)=\lam\circ\phi^{-1}\in \scr P_p $$ and
the fact that bounded sets in $(\D_2,d_{\D_2})$ are precompact in $L^p(\RR^d\to\R^d,\lam)$.

Definition \ref{def:LIWD} below is analogue to Definition \ref{def:WID}, but using the notion of the weak gradient coming from the family of Dirichlet forms ${\{\EE^{(n)}\}}_n$ instead of $\EE$.
By construction of the minimal closed extension in Proposition \ref{prp:basicP} (i),
for any $u\in \D(\EE^{(n)})$,  we find a sequence  ${(f_m )}_{m}
\subset   C_b^1(X)$, where $X:= L^p(\RR^d\to\R^d,\lam)$, such that
the analogues of (a), (b), (c) at the end of Section \ref{sec:EF} hold true regarding $\eins_{\D^{(n)}} G_\D$.
The limit
\begin{align*}
	\D\ni \phi\mapsto	\tilde D^u(\phi):=\lim_{m\to\infty}\na f_m(\phi)\qquad\text{in }L^2(\D\to L^2(\R^d\to\R^d,\ll),\eins_{\D^{(n)}}G_\D)
\end{align*}
only depends on $u$  (it does not depend on the choice of an approximating sequence ${(f_m)}_{m}$, as a consequence of the closability) and is measurable w.r.t.~$\sigma(\Psi_\lam)$.
Moreover, by the hierarchy of open sets,
\begin{equation*}
	\D^{(n_1)}\subset\D^{(n_2)}\quad\text{for}\quad  n_1\le n_2,
\end{equation*}
we conclude that  for $G_\D$-a.e.~$\phi\in\D_1\cap\D_2$ the element  $\tilde D^u(\phi)$ does not depend on
a particular choice of $n$ with $\phi\in \D^{(n)}$, given that $u \in\D(\EE^{(\infty)})$.
So, \begin{equation*}\tilde D^u\in \bigcap_{n=3}^\infty L^2\big(\D\to L^2(\R^d\to\R^d,\ll),\eins_{\D^{(n)}}G_\D\big)\end{equation*} is well-defined for $u \in\D(\EE^{(\infty)})$.  
Based on the above discussion, we introduce the following notion of a local weak intrinsic derivative (cf.~Def.~\ref{def:WID}):

\beg{defn}[Local weak intrinsic gradient]\label{def:LIWD} For  $u\in  \D(\EE^{(\infty)})$ the unique element  
\begin{equation*} Du\in \bigcap_{n=3}^\infty L^2\big(\R^d\times\scr P_p\to\RR^d,\mu\times \LL^{(n)}(\d\mu)\big)\end{equation*} such that
\begin{equation*} D u\big(\phi(\cdot),\Psi_\lam(\phi)\big)= \tilde D^u(\phi),\qquad G_\D\text{-a.e.~}\phi,\end{equation*} 
is called the \textit{local weak intrinsic gradient} of $u$. 
\end{defn}
It is clear that for $u\in C_b^1(\scr P_p)$ (respectively $u\in \D(\EE)$) the local weak intrinsic gradient coincides with the (weak) intrinsic gradient in Def.~\ref{D1} (Def.~\ref{def:WID}).

In the following theorem, we denote by $\nn_1$ and $\pp_2$ the
gradient w.r.t.~$x$, respectively the derivative w.r.t.~$s$, for a function with argument $(x,s)\in \R^d\times (0,\infty)$.

\begin{thm}\label{thm:localDom} We assume $(C_1)$, $(C_2)$ (see Sections \ref{ssec:indT} and \ref{sec:EF}) with $G_\D(\D_2\cap \D_1)=1 $  and $\ll\in \ppac$ such that $\rho_\lam$ is strictly positive and Lipschitz continuous. Then:

(i) $\rho_\mu$ is Lipschitz continuous for $\Lam$-a.e.~$\mu$.

(ii) If $F$ and $\pp_2F$ are locally Lipschitz continuous on $\R^d\times (0,\infty)$ and
\begin{equation*}
	\int_{\D}\big\||(\na_1\partial_2 F)(\cdot,\rho_\mu)|+\big|(\partial_2\partial_2 F)(\cdot,\rho_\mu)\na\rho_\mu\big|
	+|(\partial_2 F)(\cdot,\rho_\mu)|\big\|^2_{L^2(\RR^d,\mu)}\Lam^{(n)}(\d\mu)<\infty
\end{equation*}
for $n\in\NN_{\geq 3}$, then the functional $W_F$ defined in \eqref{eq:W} has a local weak intrinsic gradient
given by
\begin{equation}\label{eq:DechiF}
	D W_F(\mu)(x)= H_F(x,\mu):=(\na_1\partial_2 F)(x,\rho_\mu(x))+(\partial_2\partial_2 F)(x,\rho_\mu(x))\na\rho_\mu(x)
\end{equation}
in $\mu\times \LL(\d\mu)$-a.e.~sense on $\R^d\times \scr P_p$.
\end{thm}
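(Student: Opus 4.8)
The plan is to reduce both claims to statements about the pull-back under $\Psi_\lam$ and then apply the transformation formula from Remark after $(C_2)$. For part (i), recall that for $\phi\in\DDp$ one has
\[
\rho_{\lam\circ\phi^{-1}}(x)=\frac{\rho_\lam(\phi^{-1}(x))}{|\det[\na\phi(\phi^{-1}(x))]|}.
\]
Since $G_\D(\D_1\cap\D_2)=1$, for $G_\D$-a.e.~$\phi$ we have $\phi\in\D_2\cap\DDp$, so $\phi^{-1}$ is Lipschitz, $\na\phi$ is Lipschitz (hence $\det[\na\phi]$ is locally Lipschitz and bounded), and $\inf_x|\det[\na\phi(x)]|>0$. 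Composing the Lipschitz function $\rho_\lam$ with the Lipschitz map $\phi^{-1}$ gives a Lipschitz numerator, and dividing by a bounded-below, locally Lipschitz denominator which is bounded (because $\na\phi$ is bounded) yields a Lipschitz function. Pushing this statement forward under $\Psi_\lam$ gives Lipschitz continuity of $\rho_\mu$ for $\Lam$-a.e.~$\mu$. One should also record the (almost-everywhere) formula $\na\rho_\mu$ in terms of $\na\rho_\lam$, $\na\phi$ and $\na^2\phi$, which will be needed for part (ii).

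For part (ii) the strategy is to exhibit an explicit approximating sequence of cylinder functions and identify the limit of their gradients with $H_F$ via Definition \ref{def:LIWD}. Working on a fixed level set $\D^{(n)}$, the uniform bounds $\|\na\phi\|_\infty<n$, $\|\na^2\phi\|_\infty<n$, $\|\na\phi^{-1}\|_\infty<n$, $|\phi(0)|<n$ confine $\rho_\mu$ and $\na\rho_\mu$ to a compact range, so that "locally Lipschitz" for $F$ and $\partial_2F$ becomes genuinely Lipschitz along the relevant trajectories. I would first verify that $W_F\in\D(\EE^{(n)})$ for every $n$: using \eqref{eq:trafo}, $W_F\circ\Psi_\lam$ is a (bounded, by $(C_2)$ and the uniform bounds) function of $\phi$; the directional derivative along a test direction $\phi_k$ is computed by differentiating
\[
\varepsilon\mapsto\int_{\RR^d}F\bigl(\phi(x)+\varepsilon\phi_k(\phi(x)),\tfrac{\rho_\lam(x)}{|\det[\na(\phi+\varepsilon\phi_k\circ\phi)](x)|}\bigr)\,|\det[\na(\phi+\varepsilon\phi_k\circ\phi)](x)|\,\d x,
\]
using the chain rule $\na(u\circ\Psi_\lam)(\phi)=(Du\circ\Psi_\lam)\circ\phi$ from \eqref{eq:ChainRIntrD} applied in reverse. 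Carrying out this differentiation (the Jacobi formula $\tfrac{\d}{\d\varepsilon}\det=\det\cdot\mathrm{tr}(\cdot)$, an integration by parts in $x$ to move derivatives off $\phi_k$, and a cancellation of the $F$-term against the $\partial_2F\cdot$(density)-term coming from the volume factor) produces precisely the pairing of $\phi_k$ with $H_F(\cdot,\mu)\circ\phi$ in $L^2(\lam)$. The moment hypothesis on $\Lam^{(n)}$ is exactly what guarantees $\sum_k\EE^{(n)}_{\phi_k}(W_F,W_F)<\infty$, i.e.~$H_F(\cdot,\mu)\in L^2(\mu)$ for $\Lam^{(n)}$-a.e.~$\mu$.

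To see that $W_F\circ\Psi_\lam$ really lies in the closure of $C_b^1$-cylinder functions with the expected gradient, I would mollify: replace $F$ by $F_\delta:=F*\eta_\delta$ (smooth approximate identity on $\RR^d\times(0,\infty)$, supported away from $s=0$), set $W_{F_\delta}$ accordingly, and observe that on each $\D^{(n)}$ the integrands and their $\phi$-derivatives converge in $L^2(G_\D\mathbf 1_{\D^{(n)}})$ to those of $W_F$, by dominated convergence using $(C_2)$-type bounds, local Lipschitzness, and the uniform confinement. Each $W_{F_\delta}\circ\Psi_\lam$ is a genuine $C^1_b$ function of $\phi$ on $\D^{(n)}$ with Fréchet gradient $H_{F_\delta}(\cdot,\mu)\circ\phi$, so taking $\delta\to0$ and using the characterization in Definition \ref{def:LIWD} identifies $DW_F(\mu)=H_F(\cdot,\mu)$ in $\bigcap_nL^2(\mu(\d x)\Lam^{(n)}(\d\mu))$, and since $\Lam^{(n)}\uparrow\Lam$ up to constants this gives the stated $\mu(\d x)\Lam(\d\mu)$-a.e.~identity \eqref{eq:DechiF}.

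The main obstacle I anticipate is the bookkeeping in the differentiation/integration-by-parts step: one must justify differentiating under the integral sign in $\varepsilon$ (needs a uniform $\d x$-integrable dominating function, available from $(C_2)$ plus the $\D^{(n)}$-bounds), keep careful track of where $\na_1\partial_2F$ versus $\partial_2\partial_2F\,\na\rho_\mu$ come from, and confirm the cancellation of the "bare $F$" contribution — this is the algebraic heart of formula \eqref{eq:DechiF} and is where an analogue of the classical computation for the entropy's intrinsic gradient ($DW=\na\log\rho_\mu$) appears. Everything else is routine once the uniform bounds on $\D^{(n)}$ are in place.
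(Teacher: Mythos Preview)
Your treatment of part (i) is essentially correct and matches the paper's Step (1).

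For part (ii), however, there is a genuine gap. The domain $\D(\EE^{(n)})$ is by definition the \emph{minimal} closed extension of $(\EE^{(n)},C_b^1(\scr P_p))$, so to place $W_F$ in $\D(\EE^{(n)})$ you must exhibit an approximating sequence \emph{in $C_b^1(\scr P_p)$}. Your mollification $F_\delta=F*\eta_\delta$ does not achieve this: even for $F_\delta\in C_c^2(\R^{d+1})$, the functional $\mu\mapsto\int F_\delta(x,\rho_\mu(x))\,\d x$ still depends on the density $\rho_\mu$, which is undefined for $\mu\notin\ppac$ and certainly not a $C_b^1$ function of $\mu\in\scr P_p$ in the sense of Definition~\ref{D1}. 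Equivalently, at the level of the pull-back, $W_{F_\delta}\circ\Psi_\lam(\phi)$ involves $\det\nabla\phi$, which is not continuous in $\phi$ for the $L^p$-topology, so $W_{F_\delta}\circ\Psi_\lam\notin C_b^1(L^p(\R^d\to\R^d,\lam))$. Your directional-derivative computation (which is formally correct) therefore only shows $W_F\circ\Psi_\lam\in\D(\tilde\EE)$, the \emph{maximal} gradient form from the proof of Proposition~\ref{prp:basicP}; it does not show membership in the image of the minimal form, and there is no a priori reason these coincide.

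The missing idea, which the paper supplies in its Step~(2), is to mollify the \emph{measure} rather than $F$: set $u_m(\mu):=\int_{\R^d}\tilde F\big(x,(\tau_m*\mu)(x)\big)\,\d x$ with $\tilde F\in C_c^2(\R^{d+1})$ agreeing with $F$ on the relevant range. Since $(\tau_m*\mu)(x)=\int\tau_m(x-y)\,\mu(\d y)$ is linear in $\mu$ and defined for every $\mu\in\scr P_p$, one has $u_m\in C_b^1(\scr P_p)$ outright, with $Du_m(\mu)(x)=\int(\partial_2 F)(y,\tau_m*\mu(y))(-\nabla\tau_m)(y-x)\,\d y$; an integration by parts then produces $H_{F}$ in the limit $m\to\infty$, uniformly over $\Psi_\lam(\D^{(n)})$ thanks to the bounds you correctly identified. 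Only \emph{after} this step is established does mollifying $F$ (the paper's Step~(3)) and then removing the spatial cutoff (Step~(4)) complete the argument. Your proposal has Steps~(3)--(4) in spirit but omits the indispensable Step~(2).
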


\begin{proof} We complete the proof by four steps. Claim (i) follows from Step (1). Claim (ii) is verified in Steps (2)-(4). 
	
	(1) First, we show that $\rr_\mu$ is Lipschitz continuous for $\LL^{(n)}$-a.e.~$\mu$ and any $n\in\NN_{\geq 3}$, with a Lipschitz constant only depending on $n$.  We recall
	\beq\label{CR1}
	\rr_{\mu}=\Big(\frac{\rho_\lam}{|\det\na\phi|} \Big)\circ\phi^{-1}\qquad \text{for }\phi\in\DDp,\ \mu=\Psi_\ll(\phi)=\ll\circ\phi^{-1},
\end{equation}
see Remark \ref{rem:rhomu}.
For any $\phi\in \D^{(n)}$, we have
\beq\label{CR2} \max\Big\{\|\na\phi\|_\infty,\, \|\nn^2\phi\|_\infty,\, \|\na (\phi^{-1})\|_\infty,\, |\phi(0)|\Big\}\le n.\end{equation}
Then
\begin{equation*}
|\phi^{-1}(0)|\leq n +|\phi^{-1}(0)-\phi(0)|\leq n+ n|\phi(\phi(0))|\le
n+n|\phi(0)|+n\|\nn\phi\|_\infty|\phi(0)|\leq 3n^3
\end{equation*}
and hence
\begin{equation*}
|\phi^{-1}(x)|\leq 3 n^3+   \|\nn(\phi^{-1})\|_\infty|x|\le 3 n^3+n|x|.
\end{equation*}
This together with \eqref{CR2} yields
\begin{align}\label{eq:uniEst1}
&\inf_{|x|\le r} \Big(\frac{\rho_\lam}{|\det\na\phi|}\Big)(\phi^{-1}(x))
\geq n^{-d}  \inf_{|x|\le 3n^3+nr}\rho_\lam(x)> 0,\\
&\sup_{|x|\le r} \Big(\frac{\rho_\lam}{|\det\na\phi|}\Big)(\phi^{-1}(x))
\le n^d  \sup_{|x|\le 3n^3+nr}\rho_\lam(x)<\infty, \qquad r\in(0,\infty).\nonumber
\end{align}
Moreover, for $\mu:=\Psi_\lam(\phi)$, $\phi\in\D^{(n)}$, \eqref{CR1} implies
\begin{equation}\label{eq:naphi} \na\rho_\mu=\Bigg(\frac{(\na\phi)^{-1}\na\rho_\lam}{|\det\na\phi|}
-{\rm sgn}(\det[\nn\phi])\frac{\rho_\lam(\na\phi)^{-1}(\na\det\na\phi)}{|\det\na\phi|^2}\Bigg)\circ\phi^{-1}
\end{equation}
and thus by \eqref{CR2} we find a constant $c_n\in (0,\infty)$   such that
\begin{equation}\label{eq:lipalph}
\|\nn\rho_\mu\|_\infty\le c_n,\ \  \mu\in\Psi_\lam(\D^{(n)}). \end{equation}
In particular, \eqref{eq:lipalph} holds for $\LL^{(n)}\text{-a.e.}~\mu$.

(2) In this step we calculate the local weak intrinsic gradient $DW_F$ for $F$ additionally satisfying
\begin{equation}\label{FW}
	F\in C^2(\R^d\times (0,\infty)),\quad \bigcup_{s>0}{\rm supp} [F(\cdot,s)] \subset [-l,l]^d\ \text{for\ some \   } l>0.\end{equation}
The step involves approximating $W_F$ by a suitable sequence in $C_b^1(\scr P_p)$ for each $n$, as $\D(\EE^{(n)})$ is the closure of $C_b^1(\scr P_p)$
w.r.t.~$(\EE^{(n)}_1)^{1 /2 }$-norm.
As $W_F$ is not defined everywhere on $\scr P_p$, our proof includes mollifying techniques.
We choose a mollifier $  \tau\in  C^\infty(\RR^d)$ with compact support such that $\tau(x)=1$ if $|x|\le\vv$ for some constant $\vv>0$, 
\begin{equation*}
0\le \tau(\cdot)\le 1\qquad\text{and}\qquad \int_{\R^d}\tau(x)\d x=1.
\end{equation*}
For any $m\in\NN$ and $ x\in\R^d$,   let $\tau_m(x):= m^d \tau(mx)$. Define
\beg{equation*} 
(\tau_m*\mu)  (x):= \int_{\RR^d}\tau_m(x-y)  \mu(\d y),\ \mu\in\scr P,\, x\in\R^d.
%&(\tau_m*f)(x):=n^d\int_{\R^d} \tau_m(y-x)f(y)\d y,\ \ f\in\B_b(\R^d), x\in\R^d,\\  
\end{equation*}
We have $\tau_m*\mu \in C_b^\infty(\R^d)$ and from \eqref{eq:uniEst1} \& \eqref{eq:lipalph}   it follows
\begin{align}\label{NB1}& \inf_{\mu\in\Psi_\lam(\D^{(n)})}\,\inf_{[-l,l]^d}\,\tau_m*\mu>0\ \ 
\qquad\text{for }m\in\N,\\
&\lim_{m\to\infty} \sup_{\mu\in \Psi_\ll(\D^{(n)})} \big(\|\rr_\mu- \tau_m*\mu \|_\infty+\|\nn\rr_\mu
-\nn (\tau_m*\mu)\|_{L^2(\R^d\to\RR^d,\mu)}\big)=0,\ \ n\in\mathbb N.\nonumber\end{align}
Moreover, in view of \eqref{FW} \&\eqref{NB1}, there exists a function $\tilde F\in C^2(\RR^{d+1})$ with compact support ($\tilde F$ only depends on $F$ and $n$) such that 
$$F(x, (\tau_m*\mu)(x))=\tilde F(x, (\tau_m*\mu)(x))$$ 
for $x\in\RR^d,\,m\in\N,\,\mu\in \Psi_\ll(\D^{(n)})$.
Defining  
\begin{equation*}
u_{m}(\mu):=\int_{\RR^d}\tilde F\big(x, (\tau_m*\mu)(x)\big)\d x,\qquad \mu\in\scr P_p,
\end{equation*}
 we have $u_m\in C_b^1(\scr P_p)$ and also
\begin{align*}
 u_{m}(\mu)&=\int_{\RR^d}F\big(x, (\tau_m*\mu)(x)\big)\d x,\\
 D  u_m(x,\mu)&=\int_{\R^d} \big[\pp_2 F(y,(\tau_m*\mu)(y))\big](-\nn\tau_m)(y-x)\d y
\end{align*}
for $\mu\in \Psi_\ll(\D^{(n)})$.
With integration by parts we get
$$D u_m(x,\mu)= \int_{\R^d} \big[(\nn_1\pp_2 F)(\cdot,\tau_m*\mu) + (\pp_2\pp_2 F)(\cdot,\tau_m*\mu) 
\nn (\tau_m*\mu)\big](y)\tau_m(y-x)\d y.$$
Let $H_F$ be defined as in the statement of this theorem. Using \eqref{FW} and \eqref{NB1}, we obtain
\begin{equation*}
\lim_{m\to\infty} \sup_{\mu\in \Psi_\ll(\D^{(n)})} \Big(|u_m(\mu)-W_F(\mu)|
+ \big\|Du_m(\cdot,\mu)- H_F(\cdot,\mu)\big\|_{L^2(\R^d\to\RR^d,\mu)}\Big)=0,\qquad n\in\NN.
\end{equation*}
This convergence is stronger than convergence w.r.t.~$(\EE^{(n)}_1)^{1 /2 }$-norm.
Hence, $W_F\in \D(\EE^{(n)})$ for $n\geq 3$ and $W_F$ has local weak intrinsic derivative $DW_F= H_F$ as claimed.

(3) We calculate the local weak intrinsic derivative $DW_F$ for $F$ as in the assumptions, but keep the support condition
\begin{equation}\label{eq:supp2}
	\bigcup_{s>0}{\rm supp} [F(\cdot,s)] \subset [-l,l]^d\ \text{
		for\ some \   } l>0\end{equation}
	of the previous step.	By mollifying $F$ we can reduce the problem to step (2).
	We choose  $\kappa\in  C^2(\RR)$ with compact support in $(-1,1)$ such that $\kappa(s)=1$ if $|s|\le\vv$ for some constant $\vv>0$, 
	\begin{equation*}
		0\le \kappa(\cdot)\le 1\qquad\text{and}\qquad \int_{\R}\kappa(s)\d s=1.
	\end{equation*}
For $m \geq 1$ and $(x,s)\in \R^d\times (0,\infty)$ let
\beg{align*} 
F_m(x,s):=m\int_s^{s+2m^{-1}} \kappa(mr-1)\int_{\R^d} F(y, r) \tau_m(x-y)\d x\d r.\end{align*}
Then $F_m$ satisfies \eqref{FW}. Hence, $W_{F_m}$ has 
local weak intrinsic derivative $D W_{F_m}= H_{F_m}$ by step (2). From \eqref{eq:uniEst1}, \eqref{eq:lipalph} \& \eqref{eq:supp2} together with the local Lipschitz assumptions for $F$ and $\partial_2 F$ we conclude
\begin{equation*}
\sup_{m\geq 1}\sup_{\mu\in\Psi_\lam(\D^{(n)})}\big\||W_{F_m}(\mu)|+|H_{F_m}(\cdot,\mu)|\big\|_{L^\infty(\RR^d,\mu)}<\infty,\qquad n\in\NN.
\end{equation*}
Consequently, for each $n$,
\beq\label{NNd}\lim_{m\to\infty} \Big(\big\|W_{F_m}-W_F\big\|_{L^2(\scr P_2,\LL^{(n)})} + \big\|
DW_{F_m}-H_{F}\big\|_{L^2(\R^d\times \scr P_2\to\RR^d, \mu\times\LL^{(n)}(\d\mu))}\Big)=0.\end{equation}
This means convergence w.r.t.~$(\EE^{(n)}_1)^{1 /2 }$-norm and
so, $W_F$ has local weak intrinsic gradient $D W_F= H_F.$

(4)  Let $F$ be as in the assumptions of the theorem. For any $m\geq 1$, let
$$F_m(x,s):= \tau(m^{-1}x) F(x,s).$$
Then $F_m$ satisfies Condition \eqref{eq:supp2} and step (3) implies that $W_{F_m}$ has local weak 
intrinsic derivative $D W_{F_m}=H_{F_m}$. Since 
\begin{equation*}
	|H_{F_m}(\cdot,\mu)|\leq c_\tau\Big(\big|(\na_1\partial_2 F)(\cdot,\rho_\mu)\big|+\big|(\partial_2\partial_2 F)(\cdot,\rho_\mu)\na\rho_\mu\big|
	+\big|(\partial_2 F)(\cdot,\rho_\mu)\big|\Big)
\end{equation*}
for $m\geq 1$ and some constant $c_\tau$ depending only on $\tau$, we obtain \eqref{NNd}
for each $n$ by the integrability assumption for the right-hand side in the above estimate and Lebesgue's dominated convergence.
So, $W_F$ has local weak intrinsic derivative $D W_F= H_F.$ 
\end{proof} 

We come back to Examples \ref{exa:F} \& \ref{exa:F2} now and let $F,V,q$ be as given there.
We choose $p=2$, $\lam\in\twopac$ with $\rho_\lam$ being strictly positive, Lipschitz continuous such that 
\begin{equation}\label{eq:logderlam}
	\int_{\RR^d}\Big(|\ln(\rho_\lam)|+\tfrac{|\na\rho_\lam|}{\rho_\lam}\Big)^2\d\lam<\infty.
\end{equation}
For example, $\lam$ could be the distribution of a Gaussian random variable on $\RR^d$.
By \eqref{eq:logderlam} we ensure that Theorem \ref{thm:localDom} (ii) is applicable in this setting.
Moreover, we choose $G_\D$  as in Remark \ref{rem:Gauss2}.

We define $\Lam, \Lam^{(n)}$ as in \eqref{eq:LamEps} for $n\in\NN_{\geq 3}$.
Up to normalization constants, we have $\LL^{(n)}\uparrow \LL$.
Let $$\Lam^{(n)}_F:=\frac{\e^{-W_F}\d\Lam^{(n)}}{\Lam^{(n)}(\e^{-W_F})}$$ analogous to \eqref{eq:LW}, \eqref{eq:W} and ${(\mu_t^{(n)})}_{t\geq 0}$ denote the diffusion on $\scr P_2$ 
with Dirichlet form $(\EE^{F,n},\D(\EE^{F,n})):=(\EE^{\gamma,\Lam_F^{(n)}},\D(\EE^{\gamma,\Lam_F^{(n)}}))$ as defined in Theorem \ref{thm:main}.
The corresponding objects for the undisturbed case $F=0$ are denoted by ${(R_t^{(n)})}_{t\ge 0}$ and $(\EE^{(n)},\D(\EE^{(n)})):=(\EE^{\gamma,\Lam^{(n)}},\D(\EE^{\gamma,\Lam^{(n)}}))$,
as above in this section.

Combining the results of Sections \ref{sec:EF} and Theorem \ref{thm:localDom}, ${(\mu_t^{(n)})}_{t\geq 0}$ can be interpreted as intrinsic stochastic gradient flow on $\scr P_2$ formally satisfying
\begin{equation}\label{eq:SGF2}
	\d\mu^{(n)}_t=-D^\gamma W_F(\mu^{(n)}_t)\d t+\d R^{(n)}_t,\qquad t\ge 0,
\end{equation}
where
\begin{equation}\label{eq:DWF}
	DW_F(\cdot,\mu)=\na V+\frac{q(\rho_\mu)\na\rho_\mu}{\rho_\mu}
\end{equation}
is the local weak intrinsic gradient of $W_F$ for $F$ as in Examples \ref{exa:F} \& \ref{exa:F2}.
 The process ${(\mu^{(n)}_t)}_{t\ge 0}$ lives on the compact set $E_n:=\supp[\Lam^{(n)}]\subset \twopac$. The sets $E_n$ are increasing in $n$ and
 $$\bigcup_{n\geq 3}E_n\subseteq \scr P_2 \quad\text{densely w.r.t.~}\W_2.$$
 The precise formulation of \eqref{eq:SGF2} involves the generator $A^{(n)}$ of $\EE^{(n)}$, analogous to \eqref{eq:M} and Corollary \ref{cor:SFG}.
\begin{cor}\label{cor:locSGH}
	Let $n\in\NN_{\geq 3}$. The diffusion $((\mu^{(n)}_t)_{t\geq 0},$ $(\P^{\mu,n})_{\mu\in E_n})$ on $E_n$ 
	which is  associated with $(\EE^{F,n},\D(\EE^{F,n}))$  yields a solution to \eqref{eq:SGF2}, \eqref{eq:DWF} in the following sense:
	
	For $u\in \D(A^{(n)})$,
	\begin{equation*}
		\tilde u(\mu_t^{(n)})-\tilde u(\mu_0)-\int_0^t\Big[A^{(n)}u(\mu_s^{(n)})-\int_{\RR^d}\gamma_{\mu_s^{(n)}}(x)\big\la DW_F(x,\mu_s^{(n)}),Du(x,\mu_s^{(n)})\big\ra\mu_s^{(n)}(\d x)\Big]\d s,\quad t\ge 0,
	\end{equation*}
	is a $\P^{\mu,n}$-martingale for $\EE^{F,n}$-q.e.~$\mu\in E_n$, where  $\tilde u$ denotes an $\EE^{F,n}$-quasi-continuous version.
\end{cor}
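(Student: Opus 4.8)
The assertion is the localized analogue of Corollary \ref{cor:SFG}, with $(\EE^{(n)},\D(\EE^{(n)}))$ and $\Lam^{(n)}$ taking over the roles of $(\EE,\D(\EE))$ and $\Lam$; accordingly the plan is to apply Lemma \ref{lem:generator} with $\Lam_\circ:=\Lam^{(n)}$, $\EE^\circ:=\EE^{(n)}$ (conservative and strongly local by Proposition \ref{prp:basicP}), $\Lam:=\Lam^{(n)}_F$ and $\EE:=\EE^{F,n}$. The preliminary step is to record that $W_F$ is bounded on $E_n=\supp[\Lam^{(n)}]$: combining the bound $|F(x,s)|\le\tilde c\,s(1+|x|+|\ln s|)$ from Example \ref{exa:F} with the transformation formula and the uniform estimates on $\D^{(n)}$ established in Step~(1) of the proof of Theorem \ref{thm:localDom} (using $\lam\in\twopac$, $\rho_\lam$ strictly positive, $\lam(|\ln\rho_\lam|)<\infty$) gives $\sup_{\phi\in\D^{(n)}}|W_F(\Psi_\lam(\phi))|<\infty$, hence $\|W_F\|_{L^\infty(\Lam^{(n)})}<\infty$. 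Consequently $\varrho:=\d\Lam^{(n)}_F/\d\Lam^{(n)}=\e^{-W_F}/\Lam^{(n)}(\e^{-W_F})$ is bounded and bounded away from $0$, $\Lam^{(n)}$-a.e., so $\Lam^{(n)}_F$ and $\Lam^{(n)}$ are mutually absolutely continuous with equivalent $L^2$-norms; since $C_b^1(\scr P_p)$ is a common core, $\D(\EE^{(n)})=\D(\EE^{F,n})$ with equivalent $\EE_1^{1/2}$-norms, and in particular $\D(A^{(n)})\subseteq\D(\EE^{(n)})=\D(\EE^{F,n})$.

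Next I would verify assumptions $(a)$ and $(b)$ of Lemma \ref{lem:generator}. For $(b)$ take $\scr L:=C_b^1(\scr P_p)$: every $u\in\scr L$ is bounded with $\Gamma(u,u)(\mu)=\mu(\gamma_\mu|Du(\mu)|^2)$ bounded, so $\scr L\subseteq\D_{b,\lip}(\EE^{(n)})\cap\D(\EE^{F,n})$, it is dense in $\D(\EE^{F,n})$ by construction (Theorem \ref{thm:main}), and $\EE^{F,n}(u,v)=\int_{\scr P_p}\Gamma(u,v)\,\d\Lam^{(n)}_F$ for $u,v\in\scr L$ by definition of $\EE^{F,n}$. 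For $(a)$, Theorem \ref{thm:localDom} gives $W_F\in\D(\EE^{(\infty)})\subseteq\D(\EE^{(n)})$ with $\Gamma(W_F,W_F)(\mu)=\mu(\gamma_\mu|DW_F(\mu)|^2)$ by \eqref{eq:weakSF}, the integrand being $|\na V(x)+q(\rho_\mu(x))\na\rho_\mu(x)/\rho_\mu(x)|^2$ by \eqref{eq:DWF}; the transformation formula $\rho_\mu=(\rho_\lam/|\det\na\phi|)\circ\phi^{-1}$ together with the uniform bounds on $\D^{(n)}$ from Step~(1) of the proof of Theorem \ref{thm:localDom} (note $\|(\na\phi)^{-1}\|_{\rm op}=\|\na\phi^{-1}\|_\infty<n$) and the assumption $\lam((|\na\rho_\lam|/\rho_\lam)^2)<\infty$ then yield $\Gamma(W_F,W_F)\in L^\infty(\Lam^{(n)})$. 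Hence $W_F\in\D_{b,\lip}(\EE^{(n)})$, and since $\e^{-\cdot}$ is $C^1$ with bounded derivative on the bounded range of $W_F$, the chain rule for strongly local forms gives $\e^{-W_F}\in\D(\EE^{(n)})$, whence $\varrho\in\D(\EE^{(n)})$, $\varrho>0$, i.e.~$(a)$. This verification of $\Gamma(W_F,W_F)\in L^\infty(\Lam^{(n)})$ — equivalently, the uniform $L^2(\mu)$-control of $DW_F(\mu)$ over $\mu\in E_n$ — is the only non-routine point and the main obstacle.

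By Lemma \ref{lem:generator} we then obtain $\D(A^{(n)})\subseteq\D_1(L^{F,n})$, where $L^{F,n}$ denotes the $L^1(\scr P_p,\Lam^{(n)}_F)$-generator of $\EE^{F,n}$, together with $L^{F,n}u=A^{(n)}u+\varrho^{-1}\Gamma(u,\varrho)$ for $u\in\D(A^{(n)})$. Applying the chain rule once more, $\Gamma(u,\varrho)=-\varrho\,\Gamma(u,W_F)$, so by \eqref{eq:weakSF} and \eqref{eq:DWF},
\begin{equation*}
	L^{F,n}u(\mu)=A^{(n)}u(\mu)-\mu\big(\gamma_\mu\la DW_F(\mu),Du(\mu)\ra\big),\qquad\Lam^{(n)}_F\text{-a.e.~}\mu,
\end{equation*}
which is precisely the drift appearing in the statement. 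Finally, applying the Fukushima decomposition \eqref{eq:FukuD} to $u$ along the diffusion $(\mu^{(n)}_t)_{t\ge 0}$ properly associated with $\EE^{F,n}$: by Remark \ref{rem:MS}(ii) the martingale part $M^{[u]}$ is a $\P^{\mu,n}$-martingale for $\EE^{F,n}$-q.e.~$\mu\in E_n$, while by Remark \ref{rem:MS}(i) the zero-energy part equals $\int_0^t L^{F,n}u(\mu^{(n)}_s)\,\d s$, which is a well-defined continuous additive functional since $L^{F,n}u\in L^1(\scr P_p,\Lam^{(n)}_F)$ (Remark \ref{rem:Revuz}). Substituting the formula for $L^{F,n}u$ and noting that $\tilde u(\mu^{(n)}_0)=\tilde u(\mu)$ is $\P^{\mu,n}$-a.s.~constant yields the claimed martingale property.
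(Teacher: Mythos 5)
Your proposal is correct and follows essentially the same route as the paper: Theorem \ref{thm:localDom} supplies the local weak gradient $DW_F$, the uniform bounds on $\D^{(n)}$ (via \eqref{CR1}--\eqref{eq:naphi} and the explicit formulas for $\partial_2F$, $\nabla_1\partial_2F$, $\partial_2\partial_2F$) give $e^{-W_F}\in\D(\EE^{(n)})\cap L^\infty(\Lam^{(n)})$ with $De^{-W_F}=-e^{-W_F}DW_F$, and the conclusion then comes from the generator-perturbation/Fukushima argument. The only difference is that you unpack Corollary \ref{cor:SFG} (Lemma \ref{lem:generator} plus Remarks \ref{rem:Revuz} and \ref{rem:MS}) explicitly in the localized setting, whereas the paper simply cites it for the localized objects.
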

\begin{proof}
	The statement follows applying Theorem \ref{thm:localDom} and then Corollary \ref{cor:SFG} regarding the localized objects. 
	To verify the assumptions we compute 
	\begin{equation*}
		\partial_2 F(x,s)=V(x)+\int_1^s\frac{q(r)}{r}\d r,\quad \nabla_1\partial_2 F(x,s)=\nabla V(x),\quad \partial_2\partial_2 F(x,s)=\frac{q(s)}{s}.
	\end{equation*}
	For $n\in\N_{\geq 3}$ and $\mu\in\Psi_\lam(\D^{(n)})$ (in view of \eqref{eq:uniEst1}, \eqref{eq:lipalph}, \eqref{CR1} \& \eqref{eq:naphi}) there exists a constant $c_n\in(0,\infty)$ such that
	\begin{align*}
		\|\partial_2 F(\cdot,\rho_\mu(\cdot))\|_{L^2(\RR^d,\mu)}&\leq c_n \big\|1+|\cdot|+|\ln(\rho_\lam)|\big\|_{L^2(\RR^d,\lam)},\\
		\|\nabla_1\partial_2 F(\cdot,\rho_\mu(\cdot))\|_{L^2(\RR^d\to\RR^d,\mu)}&\leq c_n,\\
		\|\partial_2\partial_2 F(\cdot,\rho_\mu(\cdot))\nabla\rho_\mu\|_{L^2(\RR^d,\mu)}&\leq c_n\big\|\tfrac{\nabla\rho_\lam}{\rho_\lam}\big\|_{L^2(\RR^d\to\RR^d,\lam)}.
	\end{align*}
	Hence, we have $\e^{-W_F}\in D(\EE^{(n)})\cap L^\infty(\Lam^{(n)})$ and $D\e^{-W_F}=-\e^{-W_F}DW_F$.
\end{proof}

\begin{rem}\label{rem:last}
	Corollary \ref{cor:locSGH} allows us to choose $F$ as in Remark \ref{rem:gPME} setting $V:=\Phi$ and $q:=\frac{\beta'}{b}$. As pointed out the introduction, setting $\gamma(x,\mu):=b(\rho_\mu)(x)$, the
	deterministic counterpart of \eqref{eq:SGF2} yields a solution to the generalized porous media equation \eqref{eq:gPME}.
\end{rem}

\footnotesize

\end{document}